\tikzset{
	symbol/.style={
		draw=none,
		every to/.append style={
			edge node={node [sloped, allow upside down, auto=false]{$#1$}}}
	}
}
\begin{document}

\title{On the non-uniqueness of maximal purely wild extensions}
\author{Arpan Dutta}
\def\NZQ{\mathbb}               
\def\NN{{\NZQ N}}
\def\QQ{{\NZQ Q}}
\def\ZZ{{\NZQ Z}}
\def\RR{{\NZQ R}}
\def\CC{{\NZQ C}}
\def\AA{{\NZQ A}}
\def\BB{{\NZQ B}}
\def\PP{{\NZQ P}}
\def\FF{{\NZQ F}}
\def\GG{{\NZQ G}}
\def\HH{{\NZQ H}}
\def\UU{{\NZQ U}}
\def\P{\mathcal P}

%
%
\let\union=\cup
\let\sect=\cap
\let\dirsum=\oplus
\let\tensor=\otimes
\let\iso=\cong
\let\Union=\bigcup
\let\Sect=\bigcap
\let\Dirsum=\bigoplus
\let\Tensor=\bigotimes

\theoremstyle{plain}
\newtheorem{Theorem}{Theorem}[section]
\newtheorem{Lemma}[Theorem]{Lemma}
\newtheorem{Corollary}[Theorem]{Corollary}
\newtheorem{Proposition}[Theorem]{Proposition}
\newtheorem{Problem}[Theorem]{}
\newtheorem{Conjecture}[Theorem]{Conjecture}
\newtheorem{Question}[Theorem]{Question}

\theoremstyle{definition}
\newtheorem{Example}[Theorem]{Example}
\newtheorem{Examples}[Theorem]{Examples}
\newtheorem{Definition}[Theorem]{Definition}

\theoremstyle{remark}
\newtheorem{Remark}[Theorem]{Remark}
\newtheorem{Remarks}[Theorem]{Remarks}

\newcommand{\trdeg}{\mbox{\rm trdeg}\,}
\newcommand{\sep}{\mathrm{sep}}
\newcommand{\ac}{\mathrm{ac}}
\newcommand{\ins}{\mathrm{ins}}
\newcommand{\res}{\mathrm{res}}
\newcommand{\nor}{\mathrm{nor}}
\newcommand{\ab}{\mathrm{ab}}
\newcommand{\Gal}{\mathrm{Gal}\,}
\newcommand{\ch}{\mathrm{char}\,}
\newcommand{\Aut}{\mathrm{Aut}\,}
\newcommand{\dist}{\mathrm{dist}\,}

\newcommand{\n}{\par\noindent}
\newcommand{\nn}{\par\vskip2pt\noindent}
\newcommand{\sn}{\par\smallskip\noindent}
\newcommand{\mn}{\par\medskip\noindent}
\newcommand{\bn}{\par\bigskip\noindent}
\newcommand{\pars}{\par\smallskip}
\newcommand{\parm}{\par\medskip}
\newcommand{\parb}{\par\bigskip}

\let\epsilon\varepsilon
\let\phi=\varphi
\let\kappa=\varkappa

\def \a {\alpha}
\def \b {\beta}
\def \s {\sigma}
\def \d {\delta}
\def \g {\gamma}
\def \o {\omega}
\def \l {\lambda}
\def \th {\theta}
\def \D {\Delta}
\def \G {\Gamma}
\def \O {\Omega}
\def \L {\Lambda}
%
%
\textwidth=15cm \textheight=22cm \topmargin=0.5cm
\oddsidemargin=0.5cm \evensidemargin=0.5cm \pagestyle{plain}

\address{Department of Mathematics, IISER Mohali,
	Knowledge City, Sector 81, Manauli PO,
SAS Nagar, Punjab, India, 140306.}
\email{arpan.cmi@gmail.com}

\date{11.14.2020}

\thanks{The author would like to thank Franz-Viktor Kuhlmann for his unwavering support, suggestions and helpful comments. }

\keywords{Valuation, purely wild ramification, ramification theory, maximal immediate extension}

\subjclass[2010]{12J20, 13A18, 12J25}

\maketitle

\begin{abstract}
	In this paper we illustrate certain criteria which are sufficient for a henselian valued field to admit non-isomorphic maximal purely wild extensions. 
\end{abstract}

\section{Introduction}\label{Section Intro}

In this paper we will consider fields equipped with Krull valuations. A valued field $(K, \nu)$ will denote a field $K$ endowed with the valuation $\nu$. The valuation ring is a local ring $\mathcal{O}_K$ with the unique maximal ideal $\mathcal{M}_K$. We will denote the value group as $\nu K$ and the residue field as $K \nu$. The value of an element $x \in K$ will be denoted by $\nu x$. If $x \in \mathcal{O}_K$, then the residue of $x$ will be denoted by $x \nu$. The residue of a polynomial $f(X) \in \mathfrak{O}_K[X]$ in $K\nu[X]$ is denoted as $f\nu (X)$. We will denote a valued field extension as $(L|K, \nu)$ where $L|K$ is an extension of fields, $\nu$ is a valuation on $L$ and $K$ is endowed with the restriction of $\nu$ to $K$. We say $p$ is the \textbf{characteristic exponent} of $(K,\nu)$ if $p$ = $\ch K\nu$ when $\ch K\nu >0$, and $p=1$ otherwise. For basic facts about valuation theory, refer to [\ref{Abh book}, \ref{Endler book}, \ref{Engler book}, \ref{Kuh book}, \ref{ZS2}].

\pars The algebraic closure of a field $k$ will be denoted as $k^\ac$ and the separable closure as $k^\sep$. The perfect closure of a field $k$ with $\ch k=p>0$ is denoted as $k^{\frac{1}{p^\infty}}$. Given a polynomial $f(X)\in k[X]$, we say that the field $k$ is not $f$-closed in $f(k) \subsetneq k$. Equivalently, there exists $c\in k$ such that the polynomial $f(X)-c$ has no roots in $k$. For a prime $p\in\NN$, a $p$-extension is a (possibly infinite) Galois extension whose Galois group is a $p$-group. For any two fields $L$ and $F$ embedded in a common overfield $\O$, the compositum of $L$ and $F$ is the least subfield of $\O$ containing $L$ and $F$, and it will be denoted by $L.F$. Similarly, the compositum of $\nu L$ and $\nu F$ within $\nu \O$ is the least subgroup of $\nu \O$ containing $\nu L$ and $\nu F$, and it will be denoted by $\nu L + \nu F$. The compositum of two subgroups $A$ and $B$ in a group $G$ endowed with the multiplicative notation will be denoted by $A.B$. If $A$ is normal in $G$ then we have that $A.B=\{ ab \mid a\in A, \, b\in B \}$. The $p$-divisible closure of a group $\G$ is denoted as $\frac{1}{p^\infty} \G$.

\pars A valued field $(K, \nu)$ is said to be \textbf{henselian} if the valuation ring is henselian. Equivalently, $(K, \nu)$ admits a unique extension of $\nu$ to the algebraic closure $K^\ac$ and hence to any algebraic extension of $K$ [\ref{Nagata henselian}]. An algebraic extension of a henselian valued field is again henselian. For any algebraic extesnion $L|K$ of a henselian valued field $(K,\nu)$, we will denote $(L|K,\nu)$ to mean that $L$ is endowed with the unique extension of $\nu$ from $K$ to $L$. For a valued field $(K,\nu)$ admitting an unique extension of $\nu$ to a finite extension $L|K$, we have the \textbf{Lemma of Ostrowski} which gives the relation $[L:K] = p^\d [\nu L:\nu K][L\nu :K\nu] $ for some $\d \in \NN$. The value $p^\d$ is said to be the \textbf{defect} of the extension $(L|K,\nu)$ and will be denoted as $d(L|K,\nu)$. An extension is said to be \textbf{defectless} if $p^\d =1$. Defect satisfies the following multiplicative formula: for finite extensions $(L_1|K,\nu)$ and $(L_2|L_1, \nu)$, we have $d(L_2|K,\nu) = d(L_2|L_1,\nu) d(L_1|K,\nu)$.

\pars An algebraic extension $(L|K,\nu)$ of henselian valued fields is said to be \textbf{tame} if every finite subextension $(E|K,\nu)$ satisfies the following conditions: 
\sn (T1) $\ch K\nu$ does not divide $[\nu E : \nu K]$,
\n (T2) the residue field extension $E\nu |K\nu$ is separable. 
\n (T3) $(E|K,\nu)$ is defectless, that is, $[E:K] = [\nu E:\nu K][E\nu :K\nu]$. 
\newline $(L|K,\nu)$ is said to be \textbf{tamely ramified} if only (T1) and $(T2)$ holds for any finite subextension. The extension $(L|K,\nu)$ is \textbf{unramified} if it is tamely ramified and $\nu L=\nu K$. 

\pars An arbitrary extension of valued fields $(F|K,\nu)$ is said to be \textbf{immediate} if $\nu F = \nu K$ and $F \nu = K \nu$. A valued field $(M,\nu)$ is said to be \textbf{maximal} if it does not admit any nontrivial immediate extension. $(M,\nu)$ is said to be \textbf{algebraically maximal} if it does not admit any nontrivial immediate algebraic extension. It was shown by Krull that every valued field admits a maximal immediate extension [cf. \ref{Krull}]. The problem of the uniqueness of maximal immediate extensions was considered by Kaplansky in [\ref{Kaplansky}]. He proved that a sufficient condition for uniqueness is given by the so called \enquote{hypethesis A}. It has been shown [cf. \ref{Delon}, \ref{Kuh additive polynomials}, \ref{Whaples}] that hypothesis A consists of the following three conditions:
\sn (A1) $K\nu$ does not admit finite separable extensions whose degrees are divisible by $p$,
\n (A2) $\nu K = \frac{1}{p^\infty} \nu K$,
\n (A3) $K\nu$ is perfect.
\newline It is known that \enquote{hypothesis A} is not necessary for the uniqueness of maximal immediate extensions. A class of non-Kaplansky fields with unique maximal immediate extensions is given in Theorem 1.2 of [\ref{Anna Ku on maximal immediate extns of valued fields}].  

\pars Let $(K,\nu)$ be a henselian valued field with $\ch Kv =p >0$. An algebraic extension $(L|K,\nu)$ is said to be \textbf{purely wild} if $\nu L/\nu K$ is a $p$-group and $L\nu|K\nu$ is a purely inseparable extension. An equivalent definition is that the algebraic extension $L$ is linearly disjoint to the absolute ramification field $K^r$ [cf. Section \ref{Section valn and field theoretic facts}] over $K$, which is same as the condition $L\sect K^r = K$. In particular, purely inseparable extensions of valued fields and immediate extensions are purely wild extensions. The following result, shown by Matthias Pank in [\ref{Kuh-Pank-Roquette}], proves the existence of maximal purely wild extensions:  

\begin{Theorem}
	Let $(K,\nu)$ be a henselian valued field with $\ch K\nu =p>0$. Then there exists a valued field extension $(L|K,\nu)$ such that $(L|K,\nu)$ is purely wild and $L.K^r =K^\ac$. The degree of any finite subextension of $L|K$ is a power of $p$. Further, $\nu L = \frac{1}{p^\infty} \nu K$ and $L\nu = K\nu^{\frac{1}{p^\infty}}$.
\end{Theorem}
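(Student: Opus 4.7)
The plan is to apply Zorn's lemma to the poset of purely wild algebraic extensions of $K$ inside $K^\ac$, ordered by inclusion. Since the characterization $L\cap K^r=K$ of pure wildness is preserved under directed unions, every chain has an upper bound, and a maximal purely wild extension $L$ exists. The substantive task is to establish $L.K^r=K^\ac$; the numerical assertions will then follow.

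I would prove $L.K^r=K^\ac$ by contradiction, using the fact that $\Gal(K^\sep|K^r)$ is a pro-$p$ group, where $p=\ch K\nu$. Let $L^\sep$ denote the separable part of $L|K$. If $L^\sep.K^r\subsetneq K^\sep$, then the nontrivial closed subgroup $\Gal(K^\sep|L^\sep.K^r)$ of this pro-$p$ group has a normal open subgroup of index $p$, giving a Galois degree-$p$ extension $M'|L^\sep.K^r$ inside $K^\sep$. The linear disjointness of $L^\sep$ and $K^r$ over $K$ identifies $\Gal(L^\sep.K^r|K^r)$ with $\Gal(L^\sep|K)$ by restriction, which should allow me to choose a primitive element $\beta\in K^\sep$ of $M'$ over $L^\sep.K^r$ such that $L^\sep(\beta)|L^\sep$ has degree $p$ and $L^\sep(\beta)\cap K^r=K$; then $L(\beta)|L$ is a proper purely wild extension, contradicting maximality. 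The remaining case arises only when $\ch K=p$ with $L^\sep.K^r=K^\sep$ but $L.K^r\ne K^\ac$; here I would exploit that $L$ is perfect (itself forced by maximality, as $L^{1/p^\infty}|K$ is again purely wild and so $L^{1/p^\infty}=L$) together with a parallel descent to produce a degree-$p$ purely inseparable extension of $L$ that remains purely wild, again contradicting maximality. The main obstacle is the descent step: producing, from a degree-$p$ extension of $L^\sep.K^r$ (respectively $L.K^r$), a degree-$p$ extension of $L^\sep$ (respectively $L$) that is still linearly disjoint from $K^r$.

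Once $L.K^r=K^\ac$ is in place, the remaining conclusions follow routinely. For a finite subextension $(E|K,\nu)$ of $L|K$, Ostrowski's formula $[E:K]=d(E|K,\nu)[\nu E:\nu K][E\nu:K\nu]$, combined with the defect being a power of $p$ and the other two indices being powers of $p$ by pure wildness, forces $[E:K]$ to be a power of $p$. Turning to the value group, $\nu K^r$ is the prime-to-$p$ divisible hull of $\nu K$ inside $\nu K^\ac$ (because $K^r|K$ is tame), and $L.K^r=K^\ac$ gives $\nu L+\nu K^r=\nu K^\ac$. Given $\gamma\in\frac{1}{p^\infty}\nu K$, write $\gamma=\lambda+\rho$ with $\lambda\in\nu L$ and $\rho\in\nu K^r$; since $p^n\gamma\in\nu K$ for some $n$ and $m\rho\in\nu K$ for some $m$ coprime to $p$, a B\'ezout argument yields $\rho\in\nu K\subseteq\nu L$, so $\gamma\in\nu L$. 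An analogous residue-level argument, using $K^r\nu=K\nu^\sep$, gives $L\nu=K\nu^{1/p^\infty}$.
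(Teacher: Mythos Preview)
The paper does not prove this theorem; it quotes it as a result of Pank from [\ref{Kuh-Pank-Roquette}] and uses it as a black box (the complement statement appears again as Theorem~2.2(ii) of [\ref{Kuh-Pank-Roquette}] in the proofs of Section~\ref{Section group theoretic non-unique}). So there is no ``paper's own proof'' to compare against; what one can do is measure your outline against the argument in [\ref{Kuh-Pank-Roquette}].

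Your Zorn's lemma setup and the derivation of the numerical assertions are correct and routine. The entire content of the theorem sits in the step you yourself flag as ``the main obstacle'': showing that a maximal purely wild $L$ satisfies $L.K^r=K^{\ac}$. In group-theoretic terms, with $G=\Gal(K^{\sep}|K)$ and $H=\Gal(K^{\sep}|L^{\sep})$, you must show that $G_r.H=G$ forces the existence of a proper closed $H'\subsetneq H$ with $G_r.H'=G$ whenever $G_r\cap H\neq 1$. Your proposed descent does not do this. First, the identification $\Gal(L^{\sep}.K^r|K^r)\cong\Gal(L^{\sep}|K)$ via restriction requires $L^{\sep}|K$ to be Galois, which you have no reason to expect. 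Second, and more seriously, even if you produce a degree-$p$ Galois extension $M'|L^{\sep}.K^r$, there is no mechanism in your sketch for finding $\beta$ with $[L^{\sep}(\beta):L^{\sep}]=p$ and $L^{\sep}(\beta)\cap K^r=K$. The condition $G_r.H'=G$ is equivalent to $H'.(G_r\cap H)=H$, so what you need is a proper closed subgroup of $H$ that still surjects onto $H/(G_r\cap H)\cong G/G_r$; this fails precisely when $G_r\cap H$ lies in the Frattini subgroup of $H$, and nothing in your argument rules that out. Note also that $G/G_r$ need not have supernatural order prime to $p$ (the residue field can have separable extensions of degree divisible by $p$), so a naive Schur--Zassenhaus does not apply either.

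The proof in [\ref{Kuh-Pank-Roquette}] bypasses this descent entirely: it establishes directly that the normal pro-$p$ subgroup $G_r$ admits a closed complement in $G$ (indeed, inside any closed $H$ with $G_r.H=G$), via an argument specific to the ramification filtration rather than a bare Zorn's-lemma contradiction. Your framework is equivalent to theirs, but the substance---why a complement exists---is exactly what your outline leaves open.
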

Any purely wild extension can thus be embedded in a maximal one. In Theorem 5.1 of [\ref{Kuh-Pank-Roquette}] it has been shown that the maximal purely wild extensions are unique up to valuation preserving isomorphisms if the residue field $K\nu$ satisfies (A1). It is natural to inquire if the condition is also necessary to achieve uniqueness. A preliminary answer to this question is obtained in Proposition 8.2 of [\ref{Kuh-Pank-Roquette}].

\begin{Proposition}
	Let $(K,\nu)$ be a henselian valued field with $\ch K\nu=p>0$ and suppose that $K\nu$ admits a finite extension whose degree is divisible by $p$. Also suppose that there is a nontrivial separable algebraic extension $(S|K,\nu)$ which is purely wild. Then there exists a finite tame extension $(E|K,\nu)$ with $\gcd ([E:K],\, p) =1$ such that $E$ admits at least two maximal purely wild algebraic extensions which are not $E$-isomorphic. 
\end{Proposition}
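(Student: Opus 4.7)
The plan is to recast the question cohomologically. Maximal purely wild extensions of a henselian valued field $(F,\nu)$ with $\ch F\nu = p > 0$ correspond, on their separable parts, to complements of the wild inertia subgroup $\Gal(F^{\sep}|F^r)$ inside the absolute Galois group $G_F := \Gal(F^{\sep}|F)$, and the non-$F$-isomorphy of two such extensions is detected by a nontrivial class in a first non-abelian cohomology of the quotient $G_F / \Gal(F^{\sep}|F^r)$ acting by conjugation on $\Gal(F^{\sep}|F^r)$. The two hypotheses of the proposition will be combined to manufacture such a nontrivial class for a carefully chosen finite tame $E|K$ of degree coprime to $p$.

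First, I would extract from $S$ a finite separable purely wild subextension $K(\alpha)|K$, necessarily of degree a power of $p$ by the theorem of Pank quoted above; this is the \emph{wild datum}. From the residue-field hypothesis I would then produce, via Hensel's lemma applied to a separable residue extension of degree $p$ (obtained from the given finite extension of $K\nu$ of degree divisible by $p$ by taking a Galois closure and a Sylow subgroup), an unramified Galois extension $U|K$ whose Galois group contains an element $\sigma$ of order $p$; this is the \emph{unramified datum}. Working inside $G_K$, the induced conjugation action of the inertia-free quotient on the pro-$p$ wild inertia $\Gal(K^{\sep}|K^r)$ lets $\sigma$ act nontrivially on the direction in the wild inertia singled out by $K(\alpha)$, which should yield a candidate nontrivial cocycle.

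Next, I would choose the finite tame $E|K$ with $\gcd([E:K],p) = 1$ so that the class survives base change to $G_E$. Tameness of $E|K$ forces $\Gal(E^{\sep}|E^r) = \Gal(K^{\sep}|K^r)$, so the restriction map on cohomology is well defined; and because $[E:K]$ is coprime to $p$ while $\Gal(K^{\sep}|K^r)$ is pro-$p$, a restriction--corestriction type argument for pro-$p$ coefficients guarantees that any class killed by restriction to $G_E$ must already be trivial over $K$. Applying Pank's theorem over $E$ then produces one maximal purely wild extension $W_1|E$, while the surviving nontrivial cocycle translates directly into a second maximal purely wild extension $W_2|E$ which cannot be $E$-isomorphic to $W_1$.

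The main obstacle will be verifying that the combination of the wild and unramified data really produces a genuinely nontrivial cocycle, i.e.\ that the $\sigma$-twist of the purely wild direction cut out by $K(\alpha)$ is not realized as conjugation by any element of the wild inertia. Both hypotheses must enter essentially at this point: without the separable purely wild extension $S$, the wild inertia module would carry no nontrivial direction to twist, and without the failure of (A1), there would be no unramified element $\sigma$ of $p$-power order to perform the twist. The uniqueness theorem of Kuhlmann--Pank--Roquette under (A1) is precisely the statement that at least one of these obstructions must be present to spoil uniqueness, so the proposition amounts to asserting that their simultaneous presence is already sufficient.
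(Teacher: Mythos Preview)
This proposition is quoted from [\ref{Kuh-Pank-Roquette}] and not proved in the present paper; the paper's own contribution is to identify extra hypotheses under which one may take $E=K$. Your cohomological framing---complements of $G_r$ inside $G$, non-conjugacy detected by non-abelian $H^1(G/G_r,G_r)$---is the right one and matches Section~\ref{Section group theoretic non-unique}, where non-conjugate complements are built from characters $\chi,\phi:G\to\ZZ/p\ZZ$ with $G_r\nsubseteq\ker\chi$ and $G_r\subseteq\ker\phi$.

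The genuine gap is not only at the step you label ``the main obstacle'' but in the role you assign to $E$. You propose to construct the nontrivial class already over $K$ and then argue by restriction--corestriction that it survives to any tame $E$ of degree prime to $p$. If that worked you could simply take $E=K$, which is strictly stronger than the proposition and is exactly what the remainder of the paper shows requires further hypotheses (a Galois purely wild extension as in Corollary~\ref{Corollary Galois nonunique}, or the condition on $[G_r,H]$ in the theorem preceding it). Concretely, the purely wild datum $K(\a)|K$ furnishes only a character $\chi_0:G_r\to\ZZ/p\ZZ$; extending $\chi_0$ to all of $G$ requires $\ker\chi_0\trianglelefteq G$, equivalently $[G_r,H]\subseteq\ker\chi_0$, and this can fail. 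In the argument of [\ref{Kuh-Pank-Roquette}] the tame extension $E$ is not a post-hoc base change but the device that \emph{enables} the construction: one replaces $G$ by an open subgroup $G_E$ of index prime to $p$ on which $\chi_0$ does extend, and only then runs the $\chi,\phi,\psi$ argument. Separately, corestriction is not available for non-abelian $H^1$, so that step of your plan does not function as written.
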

In this paper we investigate the question under which precise conditions we can take $E=K$. 
The following result is implicit in the works of Kuhlmann, Pank and Roquette [\ref{Kuh-Pank-Roquette}], though it has not been explicitly mentioned:

\begin{Proposition}\label{Proposition Galois purely wild}
	 Let $(K(\th)|K,\nu)$ be a purely wild extension of henselian valued fields with $\ch K\nu=p>0$. Suppose that $K(\th)|K$ is Galois and that $K\nu$ is not Artin-Schreier closed. Then $(K,\nu)$ does not admit a unique maximal purely wild extension.
\end{Proposition}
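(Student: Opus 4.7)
\emph{Proof plan.} My plan is to construct a second Galois purely wild extension $K(\theta')|K$ that differs from $K(\theta)|K$ by a tame Artin-Schreier twist, and to show that any two maximal purely wild closures containing these cannot be $K$-isomorphic as valued fields. I describe the argument in equal characteristic $p$; the mixed characteristic case would require adapting the additive construction to a multiplicative Kummer analogue.

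Since $\Gal(K(\theta)|K)$ is a nontrivial finite $p$-group, it has a normal subgroup of index $p$, so by passing to its fixed field we may assume $[K(\theta):K] = p$ and write $\theta^p - \theta = b$ for some $b \in K \setminus \wp(K)$, where $\wp(X) = X^p - X$. The hypothesis that $K\nu$ is not Artin-Schreier closed provides $a \in K\nu \setminus \wp(K\nu)$; lift $a$ to $\tilde a \in \mathcal{O}_K$ and choose $\eta \in K^\sep$ with $\eta^p - \eta = \tilde a$. Then $K(\eta)|K$ is an unramified Galois extension of degree $p$, hence tame, so $K(\eta) \subseteq K^r$.

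Set $\theta' := \theta + \eta$, so a direct computation gives $(\theta')^p - \theta' = b + \tilde a$. I first verify that $X^p - X - (b + \tilde a)$ is irreducible over $K$: otherwise $b + \tilde a = \wp(c)$ for some $c \in K$, whence $b = \wp(c - \eta) \in \wp(K(\eta))$, forcing $\theta \in K(\eta) \subseteq K^r$; but $K(\theta) \cap K^r = K$ by pure wildness, so $K(\theta) = K$, contradicting nontriviality. Thus $[K(\theta'):K] = p$. Next I claim $K(\theta')|K$ is purely wild: if it were tame, then $K(\theta') \subseteq K^r$, whence $\theta = \theta' - \eta \in K^r$, contradicting $K(\theta) \cap K^r = K$; the remaining cases for degree-$p$ extensions of henselian valued fields are all purely wild with residue extension $K(\theta')\nu | K\nu$ purely inseparable (possibly trivial). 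Since $\bar\eta$ is separable of degree $p$ over $K\nu$, it cannot lie in this purely inseparable extension; hence $\eta \notin K(\theta')$, and in particular $K(\theta) \neq K(\theta')$.

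Finally, extend $K(\theta) \subseteq W$ and $K(\theta') \subseteq W'$ to maximal purely wild extensions of $K$ using the existence theorem. If there were a $K$-isomorphism of valued fields $\phi: W \to W'$, then $\phi(\theta) \in W'$ would be a root of $X^p - X - b$; since $K(\theta)|K$ is Galois, every such root lies in $K(\theta)$ and equals $\theta + i$ for some $i \in \FF_p$, so $\theta = \phi(\theta) - i \in W'$, whence $\eta = \theta' - \theta \in W'$. But $\eta \in K^r$ and $W' \cap K^r = K$, so $\eta \in K$, contradicting $\bar\eta \notin K\nu$. The main obstacle is the case analysis ensuring $K(\theta')|K$ is purely wild --- particularly when $K(\theta)|K$ is an immediate (defect) extension --- where the key observation is that tameness of $K(\theta')|K$ would propagate back to $\theta$, which the pure wildness of $K(\theta)|K$ excludes.
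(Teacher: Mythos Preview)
Your equal-characteristic argument is correct; it is essentially the Artin--Schreier instance of the constructive method the paper carries out in Theorem~\ref{Theorem non-unique eq char} (your $\theta'$ plays the role of the paper's $\alpha$, and your $\eta=\theta'-\theta$ is a root of what the paper calls $h$). The paper, however, does not prove the proposition in full generality this way: its proof is Corollary~\ref{Corollary Galois nonunique}, deduced from the character-theoretic theorem immediately preceding it. There one constructs continuous surjections $\chi,\phi:G\to\ZZ/p\ZZ$ with $H\subseteq\ker\chi$, $G_r\nsubseteq\ker\chi$ (coming from the Galois purely wild extension) and $G_r\subseteq\ker\phi$, $H\nsubseteq\ker\phi$ (coming from the Artin--Schreier extension of $K\nu$, pulled back through $G/G_i\cong\Gal(K\nu^{\sep}|K\nu)$), sets $\psi=\chi+\phi$, and produces non-conjugate complements of $G_r$ inside $\ker\chi$ and $\ker\psi$. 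Your $\theta$, $\eta$, $\theta'=\theta+\eta$ are precisely the field-theoretic avatars of $\chi$, $\phi$, $\psi$, so the underlying idea is shared; the gain of the character formulation is that it is characteristic-free.

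That is exactly where your proposal has a genuine gap. In mixed characteristic the reduction to degree~$p$ no longer yields an Artin--Schreier generator, and the Kummer replacement you suggest requires a primitive $p$-th root of unity in $K$, which the hypotheses do not provide. Adjoining $\zeta_p$ does not repair this: since $K(\zeta_p)\subseteq K^r$, any maximal purely wild extension of $K(\zeta_p)$ meets $K^r$ in $K(\zeta_p)\neq K$ and is therefore not purely wild over $K$, so non-uniqueness over $K(\zeta_p)$ does not descend to $K$. The paper's argument sidesteps the issue because building $\chi$ needs only that $H\trianglelefteq G$ with $G_r.H=G$, and building $\phi$ needs only an index-$p$ open normal subgroup of $G$ containing $G_i$; neither step invokes any Artin--Schreier or Kummer description at the field level. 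To close your argument you would have to treat the case $\ch K=0$, $\zeta_p\notin K$ separately, and the natural way to do so is precisely the character computation the paper gives.
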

In Section \ref{Section constructive non-uniqueness} we prove far reaching generalizations of the aforementioned observation. For a field $K$ with $\ch K=p>0$, a Galois extension of degree $p$ is an Artin-Schreier extension [Theorem 6.4, \S 6, Chapter VI, \ref{Lang}]. If $\ch K=0$, $\ch K\nu =p$ and $K$ contains one, hence all, primitive $p$-th roots of unity, then a Galois degree $p$ extension is a Kummer extension [Theorem 6.2, \S 6, Chapter VI, \ref{Lang}]. Artin-Schreier polynomials are special cases of \textbf{$p$-polynomials}, that is, polynomials of the form $\sum_{i=0}^{n} a_i X^{p^i} -c \in K[X]$. When $\ch K=p>0$. there is a tight connection between $p$-polynomials and minimal purely wild extensions, that is, purely wild extensions which do not admit proper nontrivial subextensions. It is illustrated in the following result which is due to Florian Pop [Theorem 13, \ref{Kuh additive polynomials}]:
\begin{Theorem}
	Let $(L|K,\nu)$ be a minimal purely wild extension of henselian valued fields with $\ch K=p>0$. Then there is a generator $\th$ of $L|K$ such that the minimal polynomial of $\th$ over $K$ is of the form $f(X) = \sum_{i=0}^{n} a_i X^{p^i} -c$. We can further assume that $a_i \in \mathfrak{O}_K$ for each $i$.
\end{Theorem}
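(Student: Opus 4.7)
The plan is to pin down $[L:K]$ as exactly $p$ using minimality together with the structure of the ramification filtration, and then read off the $p$-polynomial shape of the minimal polynomial in each of the two remaining cases.

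First, since $(L|K,\nu)$ is purely wild, the Lemma of Ostrowski together with the facts that $\nu L/\nu K$ is a $p$-group and $L\nu | K\nu$ is purely inseparable forces $[L:K]$ to be a power of $p$. Subextensions of purely wild extensions are again purely wild, so ``minimal purely wild'' coincides with ``minimal'' for $L|K$, and in particular $L = K(\th)$ for any $\th \in L \setminus K$. The maximal separable subextension of $L|K$ is itself a subextension, so by minimality $L|K$ is either purely inseparable or separable. If $L|K$ is purely inseparable then $\th$ has minimal polynomial of the form $X^{p^r} - c$ with $c = \th^{p^r}\in K$; if $r \geq 2$ then $K(\th^p)$ would be a proper nontrivial subextension, contradicting minimality, so $r = 1$ and $f(X) = X^p - c$ works, with $a_0 = 0$ and $a_1 = 1$ trivially in $\mathcal{O}_K$.

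For the separable case I would invoke the alternative description of purely wild extensions recalled in the introduction: $L \cap K^r = K$, that is, $L$ is linearly disjoint from $K^r$ over $K$. The standard linear-disjointness bijection $M \mapsto M.K^r$ between subextensions of $L|K$ and of $L.K^r | K^r$ transports minimality of $L|K$ to minimality of $L.K^r | K^r$. Now $L.K^r \subseteq K^\sep$, and $\Gal(K^\sep | K^r)$ is the wild inertia group, which is pro-$p$; a minimal finite subextension of a pro-$p$ Galois extension corresponds to a maximal closed subgroup, and in any pro-$p$ group every such subgroup is normal of index $p$ (by reducing to a finite $p$-group quotient via the core). Hence $L.K^r | K^r$ is Galois of degree $p$, and by linear disjointness so is $L|K$. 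Since $\ch K = p$, such an extension is Artin--Schreier: $L = K(\th')$ with $\th'^p - \th' = c \in K$, so the minimal polynomial $X^p - X - c$ has the desired $p$-polynomial shape with $a_0 = -1$ and $a_1 = 1$ in $\mathcal{O}_K$.

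The main obstacle is really the separable case, specifically the use of pro-$p$ group theory through the ramification filtration to force the degree down to $p$; once this is in hand, the $p$-polynomial form is immediate from the classification of degree-$p$ extensions in characteristic $p$, and the integrality of the $a_i$'s is automatic since they all lie in $\{-1,0,1\}$.
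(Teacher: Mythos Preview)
The paper does not prove this theorem; it is quoted as a result of Florian Pop from [\ref{Kuh additive polynomials}, Theorem 13], so there is no proof in the paper to compare against. Your argument, however, has a genuine gap.

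The step that fails is the claimed ``standard linear-disjointness bijection'' $M \mapsto M.K^r$ between subextensions of $L|K$ and of $L.K^r|K^r$. This map is injective (by Dedekind's modular law, any $H'$ with $H\subseteq H'\subseteq G$ satisfies $H'=(H'\cap G_r)\cdot H$), but it is \emph{not} surjective in general: a subgroup $J$ with $H\cap G_r\subseteq J\subseteq G_r$ is of the form $H'\cap G_r$ only when $J\cdot H$ is a group, i.e.\ when $H$ normalises $J$. Thus minimality of $L|K$ does \emph{not} transport to minimality of $L.K^r|K^r$, and your conclusion $[L:K]=p$ is unjustified. Concretely, take a quotient of $G=\Gal(K^\sep|K)$ of the form $(\ZZ/p\ZZ)^2\rtimes \ZZ/\ell\ZZ$ with $\ell$ a prime dividing $p^2-1$ but not $p-1$, acting irreducibly, and with the image of $G_r$ equal to $(\ZZ/p\ZZ)^2$. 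Any Sylow-$\ell$ subgroup $H$ is then maximal, satisfies $H\cdot G_r=G$, and has index $p^2$. The corresponding fixed field is a minimal purely wild extension of degree $p^2$. This is precisely why Pop's theorem is formulated with a general $p$-polynomial of degree $p^n$ rather than an Artin--Schreier polynomial.

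Your second inference --- that $L.K^r|K^r$ Galois forces $L|K$ Galois ``by linear disjointness'' --- is also false, and already the degree-$p$ case shows it: with $G_r=\ZZ/p\ZZ$ and $A=\ZZ/\ell\ZZ$ acting nontrivially (so $\ell\mid p-1$), any Sylow-$\ell$ subgroup $H$ has index $p$, satisfies $H\cdot G_r=G$, and is not normal. So even a separable minimal purely wild extension of degree $p$ need not be Artin--Schreier; producing a generator whose minimal polynomial is an additive polynomial minus a constant is exactly the nontrivial content of Pop's result that your argument bypasses.
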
 
In Section \ref{Section constructive non-uniqueness} we obtain the following result:
\begin{Theorem}\label{Thm introduction}
	Let $(K(\th)|K,\nu)$ be a minimal purely wild extension of henselian valued fields with $\ch K=p>0$. We can consider the minimal polynomial of $\th$ over $K$ to be of the form $f(X) = \mathfrak{A}(X) -c$, where
	\[ \mathfrak{A}(X) := \sum_{i=0}^{n} a_i X^{p^i} \in \mathfrak{O}_K[X]. \]
	Suppose that $\nu a_0=\nu a_n=0$ and that $K\nu$ is not $\mathfrak{A}\nu$-closed. Then $(K,\nu)$ does not admit a unique maximal purely wild extension.  
\end{Theorem}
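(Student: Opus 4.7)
The plan is a case distinction on whether $V := \ker \mathfrak{A} \subset K^{\sep}$, the root set of the additive polynomial $\mathfrak{A}$, is contained in $K$. Since $\mathfrak{A}'(X) = a_0$ is a unit (using $\ch K = p$ and $\nu a_0 = 0$), $\mathfrak{A}$ is separable, so $V$ is an $\FF_p$-vector space of dimension $n$; each root has nonnegative valuation, and the residue map $V \to V\nu$ is an $\FF_p$-isomorphism onto the roots of $\mathfrak{A}\nu$. By henselianity, $V \subset K$ is equivalent to $\mathfrak{A}\nu$ splitting over $K\nu$. From $\mathfrak{A}(\theta) = c$ we have $\mathfrak{A}\nu(\theta\nu) = c\nu$, so pure wildness of $K(\theta)|K$ forces $\theta\nu \in K\nu$: $\theta\nu$ is simultaneously separable over $K\nu$ (as a root of the separable $\mathfrak{A}\nu - c\nu$) and purely inseparable (as an element of $K(\theta)\nu$).

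In the case $V \subset K$, the splitting field of $f = \mathfrak{A} - c$ equals $K(\theta + V) = K(\theta)$, so $K(\theta)|K$ is Galois with group isomorphic to $V$ via $\sigma \mapsto \sigma\theta - \theta$. Minimality forces this group to have no proper subgroups, whence $n = 1$ and $\mathfrak{A}(X) = a_1 X^p + a_0 X$. The assumption $V \subset K$ supplies $\xi \in K^\times$ with $\xi^{p-1} = -a_0/a_1$, and the substitution $X = \xi Y$ converts $f(X) = 0$ into an Artin-Schreier equation $Y^p - Y = c'$; reducing modulo $\mathfrak{M}_K$ (valid since $\xi\nu \in K\nu^\times$) shows that $K\nu$ is not $\mathfrak{A}\nu$-closed iff it is not Artin-Schreier closed. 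Proposition \ref{Proposition Galois purely wild} then concludes this case.

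In the case $V \not\subset K$, I would pick $\eta \in V$ with $\eta\nu \notin K\nu$. Such $\eta$ cannot lie in $K(\theta)$: otherwise $\eta\nu$ would be both separable over $K\nu$ and purely inseparable (as an element of $K(\theta)\nu$), forcing $\eta\nu \in K\nu$. Set $L_1 := K(\theta)$ and $L_2 := K(\theta + \eta)$. The $K$-algebra isomorphism $L_1 \to L_2$ sending $\theta \mapsto \theta + \eta$ preserves the valuation by uniqueness of extensions from henselian $K$, so $L_2|K$ is also purely wild. By Pank's theorem, embed $L_i$ in a maximal purely wild extension $W_i$ of $K$; the compositum $L_1 \cdot L_2 = K(\theta, \eta)$ contains $\eta$ with separable residue outside $K\nu$, so it is not purely wild over $K$, forcing $W_1 \ne W_2$.

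To conclude the case $V \not\subset K$, assume for contradiction that a $K$-isomorphism $\phi \colon W_1 \to W_2$ exists (automatically valuation-preserving by henselianity). Then $\phi(\theta) \in W_2$ is a root of $f$, so $\phi(\theta) = \theta + v$ for some $v \in V$; preservation of residue together with $\theta\nu \in K\nu$ gives $v\nu = 0$, whence $v = 0$ by the injectivity of $V \to V\nu$. Thus $\theta \in W_2$, and then $\eta = (\theta + \eta) - \theta \in W_2$, contradicting pure wildness of $W_2$ (since $\eta\nu$ is separable over $K\nu$ and outside $K\nu$). The main obstacle will be setting up this case distinction cleanly; the case $V \subset K$ requires carefully tracking the Artin-Schreier substitution through residues, while the case $V \not\subset K$ becomes self-contained once one observes that $V \setminus K$ automatically contains an element with residue outside $K\nu$.
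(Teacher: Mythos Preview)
Your Case~1 (when the kernel $V$ of $\mathfrak{A}$ lies in $K$) is correct: minimality forces $n=1$, the substitution $X=\xi Y$ reduces to an Artin--Schreier polynomial, and Proposition~\ref{Proposition Galois purely wild} finishes.

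Your Case~2, however, does not work. The first problem is the assertion ``$\theta\nu\in K\nu$'': by Lemma~\ref{Lemma extns by p-polynomials}\,(1) we have $\nu(\theta-K)\leq 0$, and part~(2) of that lemma shows that if $\nu(\theta-b)=0$ for some $b\in K$ then $(K(\theta)|K,\nu)$ is defectless unramified, contradicting nontrivial pure wildness. Hence $\nu\theta<0$ strictly, and $\theta\nu$ is undefined; the residue argument you use to force $v=0$ is vacuous.

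The deeper obstruction is structural. The fields $L_1=K(\theta)$ and $L_2=K(\theta+\eta)$ are generated by conjugate roots of the \emph{same} irreducible polynomial $f$, so there is $\sigma\in\Aut(K^{\ac}|K)$ with $\sigma(L_1)=L_2$; since $\sigma$ is valuation-preserving, $\sigma(W_1)$ is a maximal purely wild extension containing $L_2$. Your argument for $W_1\not\cong W_2$ uses no special property of the chosen $W_2$, so if valid it would apply to $W_2=\sigma(W_1)$ as well --- but then $\phi=\sigma|_{W_1}$ gives $\phi(\theta)=\theta+\eta$ with $v=\eta\neq 0$, and no contradiction arises. In fact, tracing through your own reasoning one only gets $v-\eta\in V\cap K$, which is perfectly compatible with $v=\eta$. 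Note also that in this case your argument never invokes the hypothesis that $K\nu$ is not $\mathfrak{A}\nu$-closed.

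The paper avoids this by constructing a second purely wild extension $K(\alpha)$ where $\alpha$ is a root of a \emph{different} polynomial $g(X)=\mathfrak{A}(X)-(c+t)$, with $t\in\mathfrak{O}_K^\times$ chosen so that $\mathfrak{A}\nu(X)-t\nu$ has no root in $K\nu$. Then $\alpha-\theta$ is a root of $h(X)=\mathfrak{A}(X)-t$, and Lemma~\ref{Lemma extns by p-polynomials} makes $(K(\alpha,\theta)|K(\theta),\nu)$ a nontrivial defectless unramified extension; a separable-degree count shows $(K(\alpha)|K,\nu)$ is purely wild. Since $K(\alpha,\theta)|K(\theta)$ is tame and nontrivial, no conjugate of $\alpha$ lies in any maximal purely wild $W\supseteq K(\theta)$, which gives non-uniqueness. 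The essential difference is that $\alpha$ and $\theta$ are \emph{not} $K$-conjugate, so the automorphism obstruction above disappears.
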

Observe that the polynomial $\mathfrak{A}(X) := \sum_{i=0}^{n} a_i X^{p^i}$ satisfies the condition $\mathfrak{A}(x+y) = \mathfrak{A}(x)+ \mathfrak{A}(y)$ for all $x,y$ and hence $\mathfrak{A}(X)$ is an \textbf{additive} polynomial. The Frobenius endomorphism fails to hold over $K$ when $\ch K=0$ and hence the polynomial $\mathfrak{A}(X)$ is not additive in this case. However we observe that for sufficiently large values of the generator $\th$ we can obtain analogous results in the mixed characteristic case: 
\begin{Theorem}
	Let $(K(\th)|K,\nu)$ be a purely wild extension of henselian valued fields with $\ch K=0$ and $\ch K\nu=p>0$. Suppose that the minimal polynomial of $\th$ over $K$ is of the form $f(X):=\mathfrak{A}(X)-c$, where,
	\[ \mathfrak{A}(X):= \sum_{i=0}^{n} a_i X^{p^i}\in\mathfrak{O}_K[X]. \]
	Suppose that $\nu a_0=\nu a_n=0$, $\nu\th> - \frac{\nu p}{p^n}$ and that $K\nu$ is not $\mathfrak{A}\nu$-closed. Then $(K,\nu)$ does not admit a unique maximal purely wild extension. If $(K(\th)|K,\nu)$ is an immediate extension, then $(K,\nu)$ does not admit a unique maximal immediate algebraic extension.
\end{Theorem}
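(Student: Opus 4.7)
My plan is to adapt the proof of Theorem \ref{Thm introduction} to mixed characteristic, with the hypothesis $\nu\theta > -\nu p/p^n$ serving as the technical replacement for the exact additivity of $\mathfrak{A}$ that is available when $\ch K = p$ but lost when $\ch K = 0$. The first step is to establish an \emph{approximate additivity}: for every $y$ with $\nu y \geq \nu\theta$ in some algebraic extension of $K$,
\[ \nu\bigl(\mathfrak{A}(\theta + y) - \mathfrak{A}(\theta) - \mathfrak{A}(y)\bigr) > 0. \]
This is a routine binomial expansion: every cross term in $(\theta + y)^{p^i}$ carries a coefficient $\binom{p^i}{j}$ with $0 < j < p^i$, hence divisible by $p$, and the hypothesis is exactly what is needed to force $\nu p + p^n \nu\theta > 0$, which in turn bounds all such cross terms below by this positive quantity.

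Next, I would mimic the construction from Theorem \ref{Thm introduction}. Since $a_0\nu \neq 0$, the polynomial $\mathfrak{A}\nu$ has nonzero derivative, so $\mathfrak{A}\nu(X) - \bar d$ is separable for every $\bar d \in K\nu$. Choosing $\bar d \in K\nu \setminus \mathfrak{A}\nu(K\nu)$ (available by non-$\mathfrak{A}\nu$-closedness), its roots lie in $K\nu^\sep = K^r\nu$, and Hensel's Lemma applied over $K^r$ produces, for any lift $d \in \mathfrak{O}_K$ of $\bar d$, a root $\beta \in K^r \setminus K$ of $\mathfrak{A}(X) - d$, defining a nontrivial tame extension $K(\beta)|K$. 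The element $\beta$ now plays the role of the ``tame twist'' from the equal characteristic proof: it yields a second purely wild extension of $K$ which cannot be embedded in the same maximal purely wild extension as $K(\theta)$, because any such common embedding would, via the approximate additivity of the first step, force $\mathfrak{A}\nu(X) - \bar d$ to acquire a root in $K\nu$, contradicting the choice of $\bar d$. All residue-level computations controlling the twist match the equal characteristic case thanks to the approximate additivity.

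For the final assertion, I would verify that when $K(\theta)|K$ is immediate, the construction also produces an immediate extension, which then allows the non-isomorphism argument to specialize to the class of maximal immediate algebraic extensions. The principal obstacle lies in the middle step: carefully executing the ``twisting'' construction with only approximate additivity requires tracking the positive-valuation error terms through the Newton polygon and Ostrowski's Lemma, and verifying that the defect, value group, and residue field of the resulting extension agree with those of $K(\theta)|K$ closely enough for the equal-characteristic non-isomorphism argument to transfer intact.
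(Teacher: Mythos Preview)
Your proposal is correct and follows essentially the same approach as the paper: the paper likewise establishes approximate additivity via the binomial expansion (the cross terms $\binom{p^i}{k}\alpha^{p^i-k}\theta^k$ have value at least $\nu p + p^i\nu\theta > 0$ under the hypothesis $\nu\theta > -\nu p/p^n$), chooses $t\in\mathfrak{O}_K$ with $t\nu\notin\mathfrak{A}\nu(K\nu)$, and takes the second purely wild extension to be $K(\alpha)$ for $\alpha$ a root of $\mathfrak{A}(X)-(c+t)$, after which the degree and separable-degree bookkeeping from the equal-characteristic theorem carries over verbatim. The only point where you are less explicit than the paper is in naming this $\alpha$: your $\beta$ is a root of $\mathfrak{A}(X)-d$ and generates the tame piece, but in mixed characteristic $\theta+\beta$ is not an exact root of $\mathfrak{A}(X)-(c+d)$, so one must define $\alpha$ independently as such a root and then use approximate additivity to see that $(\alpha-\theta)\nu$ is a root of $\mathfrak{A}\nu(X)-d\nu$, which is exactly what drives the unramified/defectless conclusions via Lemma~\ref{Lemma extns by p-polynomials mixed char}.
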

These results are obtained by constructing a purely wild extension $(K(\a)|K,\nu)$ which does not lie in the maximal purely wild extension containing $K(\th)$. We start with the given minimal polynomial of $\th$ and use the conditions on the residue field to find a suitable minimal polynomial for $\a$. This same approach is also used to illustrate that if $(K(\th)|K,\nu)$ is a Kummer purely wild extension and if $K\nu$ is not Artin-Schreier closed, then $(K,\nu)$ does not admit a unique maximal purely wild extension. The ramification theoretic classification of extensions generated by roots of $p$-polynomials is central to this construction. Our classification of Artin-Schreier and Kummer extensions is in essence similar to the one presented in [\ref{Kuh book}]. A principal object in the classification is the set $\nu(\th-K) := \{ \nu(\th-c)\mid c\in K \}$. This set is tightly connected to the notion of \enquote{distance} which plays an important role in the classification of Artin-Schreier defect extensions [\ref{Kuh A-S extensions and defectless fields paper}] and Kummer defect extensions [\ref{Kuh gdr}].

\pars In the situation of Theorem \ref{Thm introduction}, we observe that $(E|K,\nu)$ is a Galois tame extension where $E$ is the splitting field of $\mathfrak{A}(X)$ over $K$. Further let $h(X):= \mathfrak{A}(X)-t\in K[X]$ where $\mathfrak{A}\nu(X)-t\nu$ does not have any roots in $K\nu$. Setting $F:= K(\b)$ where $h(\b)=0$, we observe that $(F|K,\nu)$ is a tame extension and $F.E|K$ is Galois. Setting $L:= K(\th)$ we again note that $L.E|K$ is a Galois extension. The following result, proved in Section \ref{Section group theoretic non-unique} shows that under certain additional criteria, we can achieve non-uniqueness irrespective of the characteristic of the base field. 
\begin{Theorem}
Let $(K,\nu)$ be a henselian valued field with $\ch K\nu=p>0$. Let $(L|K,\nu)$ be a separable nontrivial purely wild extension and $(F|K,\nu)$ be a nontrivial tame extension. Suppose that $(E|K,\nu)$ is an abelian Galois tame extension such that $L.E|K$ and $F.E|K$ are Galois extensions. Further suppose that the following conditions are satisfied:
\sn (1) $\th: \Gal(F.E|K) \longrightarrow \Gal(L.E|K)$ is a continuous isomorphism,
\n (2) $\th_{\res} : \Gal(F.E|E) \longrightarrow \Gal(L.E|E)$ is a continuous isomorphism. 
\newline Let $(L.E|K)^\ab$ denote the maximal abelian Galois subextension of $L.E|K$. Then the following cases are possible:
\sn (a) $E\subsetneq (L.E|K)^\ab$. Then $(K,\nu)$ does not admit a unique maximal purely wild extension. 
\n (b) $E=(L.E|K)^\ab$. Then $(E,\nu)$ does not admit a unique maximal purely wild extension. 
\end{Theorem}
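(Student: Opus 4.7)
The plan is to combine conditions (1)--(2) with the tame/purely-wild dichotomy to show that $K\nu$ (respectively $E\nu$) is not Artin--Schreier closed, produce a nontrivial Galois purely wild extension of $K$ (respectively $E$), and then invoke Proposition \ref{Proposition Galois purely wild}.

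I first verify that $L.E|E$ is a nontrivial Galois purely wild extension of $E$: it is Galois since $L.E|K$ and $E|K$ are Galois; nontrivial because $L\cap E=K$ (from $L\cap K^r=K$ and $E\subseteq K^r$) together with $L\neq K$; purely wild because $\nu(L.E)/\nu E$ is a quotient of the $p$-group $\nu L/\nu K$ and $(L.E)\nu|E\nu$ is the base change of the purely inseparable $L\nu|K\nu$. Consequently $\Gal(L.E|E)$ is pro-$p$ (every finite purely wild extension has $p$-power degree), and by condition~(2), $\Gal(F.E|E)$ is pro-$p$ as well. Since $F.E|E$ is tame (both $F|K$ and $E|K$ are tame), and a tame pro-$p$ Galois extension of a henselian valued field with $\ch K\nu=p$ must be unramified, $F.E|E$ is unramified; thus $(F.E)\nu|E\nu$ is a nontrivial separable pro-$p$ Galois extension and provides an Artin--Schreier extension of $E\nu$, so $E\nu$ is not Artin--Schreier closed. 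In case~(b), where $E=(L.E|K)^\ab$, the extension $L.E|E$ is itself the required nontrivial Galois purely wild extension of $E$, and Proposition~\ref{Proposition Galois purely wild} applied to $(E,\nu)$ with $L.E|E$ yields the conclusion.

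For case~(a), I reduce to the situation where $L|K$ is Galois by replacing $L$ with its Galois closure $\tilde L\subseteq L.E$ (contained in $L.E$ since $L.E|K$ is Galois; purely wild because each $K$-conjugate of $L$ shares the same value group and residue field by henselianity and uniqueness of the extension of $\nu$ to $K^\ac$, and purely wildness is closed under composita; leaving $L.E$ and conditions~(1)--(2) unchanged). With $L|K$ Galois, $\Gal(L.E|K)\cong\Gal(L|K)\times\Gal(E|K)$, and an elementary computation of the commutator subgroup gives $(L.E|K)^\ab=L^\ab\cdot E$, where $L^\ab$ is the maximal abelian subextension of $L|K$; the case~(a) hypothesis therefore becomes $L^\ab\neq K$, so $L^\ab|K$ is a nontrivial abelian Galois purely wild extension. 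To show $K\nu$ is not Artin--Schreier closed, I transport $1\times\Gal(E|K)\subseteq\Gal(L.E|K)$ via $\theta^{-1}$: the resulting normal subgroup $H:=\theta^{-1}(1\times\Gal(E|K))$ is a complement of $\Gal(F.E|E)$ in $\Gal(F.E|K)$ with quotient $\cong\Gal(L|K)$ pro-$p$, so its fixed field $F_0\subseteq F.E$ is a nontrivial Galois pro-$p$ extension of $K$. Contained in the tame extension $F.E|K$, the extension $F_0|K$ is tame, hence unramified, and $F_0\nu|K\nu$ provides an Artin--Schreier extension of $K\nu$. Proposition~\ref{Proposition Galois purely wild} applied to $(K,\nu)$ with a nontrivial finite Galois subextension of $L^\ab|K$ then concludes case~(a).

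The main obstacle is the Galois closure reduction for case~(a) and the verification that the closure $\tilde L$ remains purely wild. This hinges on uniqueness of the extension of $\nu$ to $K^\ac$ (so that $K$-conjugates of $L$ share the same value group and residue field) and on closure of purely wildness under composita; once in place, the direct-product decomposition of $\Gal(L.E|K)$ and the group-theoretic transport via $\theta$ proceed cleanly, and both cases reduce to Proposition~\ref{Proposition Galois purely wild}.
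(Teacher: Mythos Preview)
Your case~(b) argument is correct and in fact more direct than the paper's: you apply Proposition~\ref{Proposition Galois purely wild} straight to $(E,\nu)$ with the Galois purely wild extension $L.E|E$, whereas the paper reduces case~(b) to case~(a) applied over $E$.

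Your case~(a) argument, however, has a genuine gap. The claim that ``purely wildness is closed under composita'' is false, and consequently the Galois closure $\tilde L$ need not be purely wild over $K$. Here is why the reduction fails precisely when it is needed. Write $\bar G=\Gal(L.E|K)$, $\bar N=\Gal(L.E|E)$, $\bar H=\Gal(L.E|L)$; since $L$ and $E$ are linearly disjoint and $E|K$ is Galois, $\bar G=\bar N\rtimes\bar H$ with $\bar N$ normal. The Galois closure $\tilde L$ corresponds to $\mathrm{Core}_{\bar G}(\bar H)$, and $\tilde L$ is purely wild over $K$ iff $\tilde L\cap K^r=K$ iff $\bar N\cdot\mathrm{Core}_{\bar G}(\bar H)=\bar G$. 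But $\mathrm{Core}_{\bar G}(\bar H)\subseteq\bar H$, and since $\bar H\cap\bar N=1$, the product $\bar N\cdot\mathrm{Core}_{\bar G}(\bar H)$ equals $\bar G$ only if $\mathrm{Core}_{\bar G}(\bar H)=\bar H$, i.e.\ only if $\bar H$ is already normal, i.e.\ only if $L|K$ is already Galois. So your reduction is vacuous. Concretely, in the paper's motivating example with $f(X)=\mathfrak{A}(X)-c$ and $E$ the splitting field of $\mathfrak{A}$, the Galois closure of $L=K(\th)$ is exactly $L.E$, which contains the nontrivial tame extension $E$ and is therefore not purely wild.

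This is not merely a patchable detail: the strategy of producing a Galois purely wild extension of $K$ inside $(L.E|K)^{\ab}$ and then invoking Proposition~\ref{Proposition Galois purely wild} can fail outright. For instance, if $\Gal((L.E|K)^{\ab}|K)\cong\ZZ/p^2\ZZ$ with $\Gal((L.E|K)^{\ab}|E)$ the unique subgroup of order $p$ (and $E|K$ an unramified degree-$p$ extension), then no proper subgroup surjects onto $\Gal(E|K)$, so there is \emph{no} nontrivial Galois purely wild subextension of $(L.E|K)^{\ab}$ over $K$. The paper's proof of case~(a) avoids this obstruction entirely: instead of looking for a Galois purely wild extension, it works inside the absolute Galois group $G$, builds two characters $\chi,\chi'=\chi\cdot\phi:G\to\G^{\ab}$ (one via $L.E$, one twisted through $\th$ and $F.E$), and uses them to exhibit two non-conjugate complements of $G_r$ in $G$ via the Kuhlmann--Pank--Roquette complement theorem. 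This is where condition~(1) does its real work, and it is a genuinely different mechanism from reduction to Proposition~\ref{Proposition Galois purely wild}.
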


In the statement of the next theorem, $G_r$ denotes the absolute ramification group [cf. Section \ref{Section valn and field theoretic facts}] and $[G_r,H]$ denotes the subgroup of $\Gal(K^\sep|K)$ generated by all commutators $ghg^{-1}h^{-1}$ where $g\in G_r$ and $h\in H$. As a consequence we rediscover Proposition \ref{Proposition Galois purely wild}.

\begin{Theorem}
	Let $(L|K,\nu)$ be a nontrivial separable purely wild extension of henselian valued fields with $\ch K\nu=p>0$. Set $H:= \Gal(K^\sep|L)$ and $F:= (K^\sep)^{[G_r,H]}$. Suppose that $L$ and $F$ are not linearly disjoint over $K$. Then $(K,\nu)$ does not admit a unique maximal purely wild extension if the residue field $K\nu$ is not Artin-Schreier closed. 
\end{Theorem}

\par We finally mention that the non-uniqueness of maximal immediate extensions has been studied extensively in [\ref{Warner non-uniqueness max imm extns}]. An extreme case of non-uniqueness of maximal immediate extensions is constructed in Theorem 1.6 of [\ref{Anna Ku alg independence of elements in completions and max imm extns}].  


\section{Valuation and field theoretic facts} \label{Section valn and field theoretic facts}
In this section, we recall some results from field theory [cf. \ref{Lang}] and valuation theory [cf. \ref{Abh book}, \ref{Endler book}, \ref{Engler book}, \ref{Kuh book}, \ref{ZS2}]. Two field extensions $K_1|k$ and $K_2|k$ which are embedded in some overfield of $k$ are said to be \textbf{linearly disjoint} if any finite set of elements of $K_1$ which are linearly independent over $k$ are still such over $K_2$. When both extensions are finite, an equivalent characterization is $[K_1.K_2 :k] = [K_1:k][K_2:k]$. If $K_1|k$ is Galois and $K_2|k$ is an arbitrary extension, then a necessary and sufficient condition for linear disjointness is $K_1 \sect K_2 = k$. 
\newline If $K_1|k$ is separable algebraic and $K_2|k$ is purely inseparable, then $K_1$ and $K_2$ are linearly disjoint over $k$. Further if $F|k$ is any arbitrary extension, then $K_1.F|F$ is separable algebraic [Theorem 4.5, Chapter V, \ref{Lang}] and $K_2.F|F$ is purely inseparable [Proposition 6.5, Chapter V, \ref{Lang}].

\pars Consider the normal extension $(K^\ac|K,\nu)$ of valued fields for an arbitrary field $K$ and let $G := \Gal(K^\sep|K)$. Equivalently we will identify $G$ with the group of all automorphisms of $K^\ac$ over $K$. The absolute decomposition group of the extension is defined as,
\[  G_d := \{\s \in G \mid \nu \circ \s = \nu \text{ on } K^\sep \}.\]
The absolute inertia group is defined as,
\[G_i := \{\s \in G \mid \nu(\s x - x) > 0 \, \forall \, x \in \mathcal{O}_{K^\sep} \}.\]
The absolute ramification group is defined as, 
\[ G_r := \{\s \in G \mid \nu(\s x - x) > \nu x \, \forall \, x \in K^\sep \setminus \{0\} \}. \]
The corresponding fixed fields in $K^\sep$ will be denoted as $K^d$, $K^i$ and $K^r$ and they are called the \textbf{absolute decomposition field}, \textbf{absolute inertia field} and the \textbf{absolute ramification field} of $(K,\nu)$ respectively. We have that $G_r \unlhd G_i \unlhd G_d$ and $G_r \unlhd G_d$. Further these are closed normal subgroups. We also have the continuous isomorpshism $G_d/G_i = \Gal(K^i|K^d) \iso \Gal((K\nu)^\sep|K\nu)$. For an arbitrary algebraic extension of valued fields $(L|K,\nu)$, we have that $L^d = L.K^d$, $L^i = L.K^i$ and $L^r = L.K^r$. The \textbf{henselization} of a valued field $(K,\nu)$ is the smallest henselian field containing $K$, and we can consider it to be the same as the absolute decomposition field $K^d$. So when $(K,\nu)$ is henselian, $K^r|K$ and $K^i|K$ are Galois extensions. We have the following important result [Theorem 11.1, \ref{Kuh vln model}]:
\begin{Theorem}\label{Theorem maximal tame}
	If $(K,\nu)$ is henselian, then $(K^r,\nu)$ is the unique maximal tame extension of $(K,\nu)$ and $(K^i,\nu)$ is the unique maximal defectless and unramified extension of $(K,\nu)$.
\end{Theorem}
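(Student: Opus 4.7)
The plan is to establish two maximality statements: that $K^r|K$ is the unique maximal tame extension of $(K,\nu)$ and that $K^i|K$ is the unique maximal defectless unramified extension. For each I would first verify that the field in question has the stated property, and then show that every extension with that property embeds into it.

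For $K^i|K$, the isomorphism $G_d/G_i \iso \Gal((K\nu)^\sep|K\nu)$ recorded just before the theorem, combined with $K^d=K$ since $(K,\nu)$ is henselian, identifies $(K^i)\nu$ with $(K\nu)^\sep$; the definition of $G_i$ as acting trivially on residues of elements of $\mathcal{O}_{K^\sep}$ forces $\nu K^i=\nu K$. A Hensel-lifting argument then gives $[E:K]=[E\nu:K\nu]$ for every finite subextension $E|K$ inside $K^i$: lifting a primitive element $\bar\th$ of the separable extension $E\nu|K\nu$ to a root $\th$ of a monic preimage of its minimal polynomial, separability guarantees both that $\th$ is fixed by $G_i$ (being the unique root of the lift with residue $\bar\th$) and that $[K(\th):K]=[K\nu(\bar\th):K\nu]$. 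Hence $K^i|K$ is defectless and unramified. For the converse, any defectless unramified $L|K$ has $\nu L=\nu K$ and $L\nu|K\nu$ separable; for $\th\in\mathcal{O}_L$ whose residue generates $L\nu|K\nu$, the minimal polynomial of $\th$ over $K$ has degree $\leq[L:K]=[L\nu:K\nu]$ while its residue has $\bar\th$ as a root and so has degree $\geq[L\nu:K\nu]$. Equality forces $L=K(\th)$ with separable minimal polynomial, and the same Hensel-uniqueness argument places $\th\in K^i$, so $L\subseteq K^i$.

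For $K^r|K$, I would invoke the standard ramification-theoretic structure theorem, which identifies $\Gal(K^r|K^i)$ with $\mathrm{Hom}(\nu K^r/\nu K,((K\nu)^\sep)^\times)$ via the Kummer-type pairing $\s\mapsto(\nu x\mapsto(\s x/x)\nu)$, shows that $\nu K^r$ is obtained from $\nu K$ by adjoining all $m$-th roots with $\gcd(m,p)=1$ (where $p=\ch K\nu$), and shows that $(K^r)\nu=(K\nu)^\sep$. From this, any finite subextension $E|K$ of $K^r|K$ satisfies (T1) and (T2); defectlessness of $K^r|K^i$ at every finite level (totally ramified of order coprime to $p$) and of $K^i|K$, combined with the multiplicativity of defect, imply $E|K$ is defectless. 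Conversely, for a finite tame $L|K$, the compositum $L\cdot K^i$ is tame over $K^i$ with trivial residue extension and value-group extension of order coprime to $p$; since $K^i$ contains all $p'$-th roots of unity (they are separable over $K\nu$, hence already lie in $(K^i)\nu$ and lift by Hensel), Kummer theory presents $L\cdot K^i$ as generated over $K^i$ by radicals $\sqrt[m]{a}$ with $\gcd(m,p)=1$, each of which lies in $K^r$ by the description of $\nu K^r$ above. Hence $L\subseteq L\cdot K^i\subseteq K^r$.

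The hardest step is precisely the Kummer-theoretic description of $\Gal(K^r|K^i)$ and $\nu K^r$ used in the second part: this is the technical core of classical ramification theory for henselian valued fields, requiring one to link the group-theoretic inequality $\nu(\s x-x)>\nu x$ defining $G_r$ with the arithmetic of value groups and residue fields, and in particular to verify that every $m$-th root (for $\gcd(m,p)=1$) of a unit of $K^i$ is fixed by $G_r$ while no other algebraic element over $K^i$ is. Once this structural input is accepted, all the tame/defectless bookkeeping above is routine.
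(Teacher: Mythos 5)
The paper does not actually prove this statement: it is imported verbatim as Theorem 11.1 of Kuhlmann's \emph{Valuation theoretic and model theoretic aspects of local uniformization}, so there is no internal proof to compare against. Your sketch follows the standard textbook route (Endler, Engler--Prestel, Kuhlmann's book): identify $(K^i)\nu$ with $(K\nu)^\sep$ and $\nu K^i$ with $\nu K$, use Hensel lifting of separable residue generators for both directions of the $K^i$ statement, and then handle $K^r$ via the Kummer-type pairing on $\Gal(K^r|K^i)$ and the description of $\nu K^r$ as the $p'$-divisible hull of $\nu K$. As an outline this is correct, and you are right to flag where the real work sits. But be aware that the ``standard ramification-theoretic structure theorem'' you invoke for $K^r$ --- that $\Gal(K^r|K^i)\iso\mathrm{Hom}(\nu K^r/\nu K,((K\nu)^\sep)^\times)$, that $\nu K^r$ is the prime-to-$p$ divisible hull, that $(K^r)\nu=(K\nu)^\sep$, and that tamely totally ramified extensions of $K^i$ are radical --- is essentially the content of the theorem being proved, so your argument is a reduction to classical ramification theory rather than a self-contained proof. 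Two smaller points worth tightening if you were to write this out: the claim that the definition of $G_i$ ``forces'' $\nu K^i=\nu K$ is not immediate from the definition alone (it comes out of the same lifting argument, via the fundamental inequality applied to each lifted generator), and in the converse direction for tame $L|K$ you use $(L.K^i)\nu=L\nu.(K^i)\nu$, which itself needs the defectlessness of $L|K$ and a valuation-disjointness argument of the kind the paper records in its Lemma on tame versus purely wild extensions.
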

We note this following equivalent condition of Hensel's Lemma, often referred to as the strong Hensel's Lemma. 

\begin{Lemma}
	Let $(K,\nu)$ be a henselian valued field. For any polynomial $f(X) \in \mathfrak{O}_K[X]$ the following holds true: if there is a factorization $f\nu(X) = g\nu(X) h\nu(X)$ such that $g\nu(X)$ is relatively prime to $h\nu(X)$ in $K\nu[X]$, then there exists $g(X), \, h(X) \in \mathfrak{O}_K[X]$ with residues $g\nu(X)$ and $h\nu(X)$ such that $f(X)=g(X)h(X)$ and $\deg g(X) = \deg g\nu(X)$.
\end{Lemma}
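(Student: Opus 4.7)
The plan is to extract the desired factorization from an idempotent decomposition of the finite $\mathcal{O}_K$-algebra $A := \mathcal{O}_K[X]/(f(X))$, exploiting that henselianity of $\mathcal{O}_K$ forces idempotents to lift from $\bar A := A/\mathcal{M}_K A$.

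First I would reduce to the case where $f$ is monic: if the leading coefficient $a_n$ of $f$ is not a unit in $\mathcal{O}_K$, replace $f(X)$ by $a_n^{n-1} f(X/a_n) \in \mathcal{O}_K[X]$ and translate the conclusion back at the end. With $f$ monic of degree $n$, the quotient $A$ is a free $\mathcal{O}_K$-module of rank $n$, and reduction modulo $\mathcal{M}_K$ gives $\bar A = K\nu[X]/(f\nu(X))$. The coprimality of $g\nu$ and $h\nu$, together with the Chinese Remainder Theorem, yields a ring isomorphism $\bar A \iso K\nu[X]/(g\nu) \times K\nu[X]/(h\nu)$, and hence a nontrivial idempotent $\bar e \in \bar A$ projecting onto the first factor.

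Next I invoke the henselian hypothesis: since $\mathcal{O}_K$ is henselian local and $A$ is module-finite over it, the idempotent $\bar e$ lifts to an idempotent $e \in A$, so $A = eA \oplus (1-e)A$ as $\mathcal{O}_K$-algebras. Each summand is a direct summand of the free module $A$, hence itself free over $\mathcal{O}_K$, and comparing ranks with their residues gives $\mathrm{rank}_{\mathcal{O}_K}(eA) = \deg g\nu =: m$ and $\mathrm{rank}_{\mathcal{O}_K}((1-e)A) = n - m$.

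The factorization is then produced by taking characteristic polynomials. Let $g(X) \in \mathcal{O}_K[X]$ be the characteristic polynomial of multiplication by $X$ on $eA$, and let $h(X) \in \mathcal{O}_K[X]$ be the analogue for $(1-e)A$. Both are monic of the correct degrees; the product $g(X) h(X)$ is the characteristic polynomial of multiplication by $X$ on all of $A$ by multiplicativity under direct sums, and this equals $f(X)$ because $A$ is cyclic over $\mathcal{O}_K[X]$ with $X$ satisfying $f(X) = 0$. Reducing $g(X)$ modulo $\mathcal{M}_K$ yields the characteristic polynomial of $X$-multiplication on $K\nu[X]/(g\nu)$, which is $g\nu$ itself, and likewise the reduction of $h(X)$ is $h\nu$. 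The main obstacle is the idempotent-lifting step, which relies on the standard fact that a module-finite algebra over a henselian local ring is a finite product of henselian local rings; once this is granted, the rest of the argument is elementary linear algebra.
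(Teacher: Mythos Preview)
The paper does not prove this lemma; it is stated without argument as a known equivalent formulation of Hensel's Lemma, so there is nothing in the paper to compare your proof against. For monic $f$ (equivalently, $f$ with unit leading coefficient, after scaling) your idempotent-lifting argument is correct and pleasant: henselianity of $\mathcal{O}_K$ lifts the idempotent, the summands $eA$ and $(1-e)A$ are free of the right ranks, and $1,X,\dots,X^{m-1}$ form an $\mathcal{O}_K$-basis of $eA$ by Nakayama (they do so modulo $\mathcal{M}_K$, and a surjection between free modules of equal finite rank over a local ring is an isomorphism), so $eA\cong\mathcal{O}_K[X]/(g)$ with $g$ monic of degree $m$, whence $f=gh$ as claimed.

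The reduction to the monic case, however, does not work as written. When $\nu a_n>0$ the substitution $\tilde f(X)=a_n^{\,n-1}f(X/a_n)$ has reduction $\tilde f\nu(X)=X^{n-1}(X+a_{n-1}\nu)$ irrespective of the lower coefficients of $f$, so the given coprime factorization $f\nu=g\nu\cdot h\nu$ has no counterpart for $\tilde f\nu$. Concretely, over $\ZZ_p$ with $f(X)=pX^3+X^2-1$ one has $f\nu=(X-1)(X+1)$ but $\tilde f\nu=X^2(X+1)$, and the factor $X-1$ has disappeared. Nor can one translate a factorization of $\tilde f$ back: writing $f(X)=a_n^{\,1-n}\tilde g(a_nX)\,\tilde h(a_nX)$ forces negative powers of $a_n$ into one of the factors. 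The non-monic case needs a genuinely different argument (e.g.\ direct Newton iteration). Note, though, that the paper only ever applies the lemma under the hypothesis $\deg f=\deg f\nu$, which forces the leading coefficient to be a unit; for that case your proof is complete.
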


\begin{Remark}\label{Remark hensels lemma}
	Let $(K,\nu)$ be a henselian valued field with $f(X) \in \mathfrak{O}_K[X]$ such that $\deg f=\deg f\nu$. Observe that $f\nu (X)\in K\nu[X]$ which is an unique factorization domain and hence we have an irreducible decomposition $f\nu = (f_1 \nu)^{e_1} \dotsc (f_r\nu)^{e_r}$. By repeated application of the strong Hensel's Lemma, we have a decomposition $f=f_1^{e_1}\dotsc f_r^{e_r}$ where $f_i(X) \in \mathfrak{O}_K[X]$ and $\deg f_i =\deg f_i \nu$ for each $i$.
\end{Remark}

In the case of valued fields, we extend the notion of being linearly disjoint to that of being valuation disjoint. An extension $(L|K,\nu)$ is said to be \textbf{valuation disjoint} from $(F|K,\nu)$ if it satisfies the following two conditions:
\sn (1) $\nu L \sect \nu F = \nu K$,
\n (2) $L\nu$ and $F\nu$ are linearly disjoint over $K\nu$. 
\newline It follows directly that tame extensions are valuation disjoint from purely wild extensions. We have the following general result [cf. Chapter 11.9, \ref{Kuh book}].
\begin{Theorem}
	Let $(\O|K,\nu)$ be a valued field extension. Suppose that $(L|K,\nu)$ is a defectless algebraic subextension which is valuation disjoint from an arbitrary subextension $(F|K,\nu)$. Then, 
	\sn (1) $L|K$ and $F|K$ are linearly disjoint,
	\n (2) there is an unique extension of $\nu$ from $F$ to $L.F$ and $d(L.F|F,\nu) = 1$,
	\n (3) $\nu(L.F) = \nu L + \nu F$ and $(L.F) \nu = L\nu .F\nu$.
\end{Theorem}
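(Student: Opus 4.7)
The plan is to first treat the case where $L|K$ is finite; the infinite algebraic case then follows by writing $L$ as the filtered union of its finite defectless subextensions---each valuation disjoint from $F$ by hypothesis---and noting that linear disjointness, value groups, and residue fields all commute with filtered colimits. So assume $L|K$ is finite. I will pick representatives $a_1,\dots,a_r\in L^{*}$ of the cosets of $\nu L/\nu K$ and elements $b_1,\dots,b_s\in\mathcal{O}_L$ whose residues form a $K\nu$-basis of $L\nu$. Since $L|K$ is defectless, $rs=[\nu L:\nu K][L\nu:K\nu]=[L:K]$, and a standard scaling-and-residue argument shows that $\{a_ib_j\}$ is in fact a $K$-basis of $L$.

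The crucial step will be to show that $\{a_ib_j\}$ remains linearly independent over $F$, which together with the trivial reverse inequality $[L.F:F]\leq[L:K]$ proves (1). Suppose $\sum_{i,j}c_{ij}a_ib_j=0$ with $c_{ij}\in F$ not all zero, and set $\beta_i:=\sum_j c_{ij}b_j$, so $\sum_i a_i\beta_i=0$. I would first prove the lemma that for any nonzero $\beta=\sum_j c_j b_j$ with $c_j\in F$ we have $\nu\beta\in\nu F$: scale $\beta$ by an element of $F$ so that all $c_j\in\mathcal{O}_F$ with some $c_j$ a unit, and then $\beta\nu=\sum_j(c_j\nu)(b_j\nu)\in L\nu.F\nu$ is nonzero, because the $b_j\nu$ are linearly independent over $F\nu$---this is the residue-field half of valuation disjointness. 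Now among the nonzero summands $a_i\beta_i$, the minimum of the values $\nu a_i+\nu\beta_i$ must be attained by at least two indices, else the sum would have a unique minimum value and could not vanish. Hence for some $i\neq i'$ we have $\nu a_i-\nu a_{i'}=\nu\beta_{i'}-\nu\beta_i\in\nu F$; combined with $\nu a_i-\nu a_{i'}\in\nu L$ and $\nu L\cap\nu F=\nu K$ this contradicts the choice of the $a_i$ as representatives of distinct cosets of $\nu K$ in $\nu L$. So every $\beta_i=0$, and re-applying the lemma (which in particular yields $F$-linear independence of the $b_j$) forces all $c_{ij}=0$.

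For (2) and (3), I will consider the distinguished extension of $\nu|_F$ to $L.F$ inherited from $\Omega$. Valuation disjointness yields
\[
[\nu(L.F):\nu F]\geq[\nu L+\nu F:\nu F]=[\nu L:\nu K], \qquad [(L.F)\nu:F\nu]\geq[L\nu.F\nu:F\nu]=[L\nu:K\nu].
\]
Multiplying these and using $[L:K]=[L.F:F]$ from (1) gives $ef\geq[L.F:F]$. The fundamental inequality $\sum_{s=1}^{g}e_sf_s\leq[L.F:F]$ over all extensions of $\nu|_F$ to $L.F$ then forces $g=1$ (so the extension is unique), $ef=[L.F:F]$, and equality in both bounds above, yielding $\nu(L.F)=\nu L+\nu F$ and $(L.F)\nu=L\nu.F\nu$, which is (3). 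Finally, Ostrowski's Lemma applied to the (now unique) extension $L.F|F$ reads $[L.F:F]=d(L.F|F,\nu)\cdot ef$, forcing $d(L.F|F,\nu)=1$ and completing (2).

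The main obstacle is the linear-disjointness step, where the two halves of valuation disjointness must be combined in a single argument: the residue-field part is used to control each slice $\beta_i=\sum_j c_{ij}b_j$ (ensuring $\nu\beta_i\in\nu F$), while the value-group part separates different slices (forcing distinct $i$ to produce distinct cosets in $\nu L/\nu K$). Once (1) is established, (2) and (3) follow essentially mechanically from the fundamental inequality and Ostrowski's Lemma.
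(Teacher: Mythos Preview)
Your argument is correct and is essentially the standard proof of this result. Note, however, that the paper does not supply its own proof of this theorem: it is quoted as a known result from Chapter~11.9 of Kuhlmann's book, so there is nothing in the paper to compare your proof against. Your construction of the basis $\{a_ib_j\}$ from coset representatives of $\nu L/\nu K$ and a residue basis of $L\nu|K\nu$, followed by the two-layer ultrametric argument (residue-field disjointness controls each slice $\beta_i$, value-group disjointness separates the slices), is exactly the classical route; the deduction of (2) and (3) from (1) via the fundamental inequality and Ostrowski's Lemma is likewise standard. One small point of phrasing: your ``lemma'' is stated only for nonzero $\beta$, so strictly speaking the $F$-linear independence of the $b_j$ is not a special case of the lemma but of its \emph{proof} (taking $\beta=0$ and deriving a contradiction); this is harmless but worth tightening in a final write-up.
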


As a consequence we obtain the following result which we will require in the later sections. 
\begin{Lemma}\label{Lemma valn disj tame and purely wild}
	Let $(K,\nu)$ be a henselian valued field. Let $(L|K,\nu)$ be a tame extension and $(F|K,\nu)$ be purely wild. Then $L$ and $F$ are linearly disjoint over $K$, $(L.F|F,\nu)$ is tame and $(L.F|L,\nu)$ is purely wild. In addition, if $(L|K,\nu)$ is unramified, then $(L.F|F,\nu)$ is also unramified.
\end{Lemma}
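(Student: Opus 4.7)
The plan is to derive everything from the preceding theorem on compositums of valuation disjoint extensions, using the remark stated just above it that tame extensions are automatically valuation disjoint from purely wild extensions. Since every finite subextension of a tame extension is defectless by (T3), $(L|K,\nu)$ satisfies the hypotheses of that theorem together with $(F|K,\nu)$, and I would read off immediately that $L$ and $F$ are linearly disjoint over $K$, that $\nu$ extends uniquely from $F$ to $L.F$ with $d(L.F|F,\nu)=1$, and that $\nu(L.F)=\nu L+\nu F$ and $(L.F)\nu=L\nu.F\nu$.

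To show $(L.F|F,\nu)$ is tame, I would verify conditions (T1), (T2), (T3) on an arbitrary finite subextension $E'|F$ of $L.F|F$. Any such $E'$ lies inside $E.F$ for some finite subextension $K\subseteq E\subseteq L$, so it suffices to prove $(E.F|F,\nu)$ tame, as subextensions of tame extensions are tame. Applying the preceding theorem to the finite tame $E|K$ and to $F|K$ yields $d(E.F|F,\nu)=1$, together with $[\nu(E.F):\nu F]=[\nu E+\nu F:\nu F]=[\nu E:\nu E\cap\nu F]=[\nu E:\nu K]$, which is coprime to $p=\ch K\nu$ by (T1) for $E|K$, and $(E.F)\nu=E\nu.F\nu$, which is separable over $F\nu$ because $E\nu|K\nu$ is separable and separability of algebraic extensions is preserved under base change (as recalled at the beginning of this section).

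For $(L.F|L,\nu)$ purely wild, the same formulas give $\nu(L.F)/\nu L\cong \nu F/(\nu L\cap\nu F)=\nu F/\nu K$, a $p$-group since $F|K$ is purely wild, and $(L.F)\nu/L\nu=L\nu.F\nu/L\nu$, which is purely inseparable because $F\nu|K\nu$ is and pure inseparability is preserved under base change. Finally, if $(L|K,\nu)$ is unramified then $\nu L=\nu K$, whence $\nu(L.F)=\nu K+\nu F=\nu F$; together with the already-established tameness of $(L.F|F,\nu)$ this is exactly the definition of unramifiedness.

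There is no single hard step; the main thing to handle carefully is the reduction to finite subextensions when invoking the compositum theorem (so that each of (T1), (T2), (T3) can be read off for finite $E'|F$), together with the bookkeeping that keeps straight which residue extension inherits separability (over $F\nu$) and which inherits pure inseparability (over $L\nu$) under base change.
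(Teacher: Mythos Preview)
Your proposal is correct and follows essentially the same route as the paper: both apply the preceding compositum theorem (using that tame extensions are defectless and valuation disjoint from purely wild ones), read off $\nu(L.F)=\nu L+\nu F$ and $(L.F)\nu=L\nu.F\nu$, and then use the isomorphisms $\nu(L.F)/\nu F\cong\nu L/\nu K$ and $\nu(L.F)/\nu L\cong\nu F/\nu K$ together with base change of separability/pure inseparability to conclude. The only difference is cosmetic: the paper applies the theorem to $L$ and $F$ in one stroke and is very terse, whereas you explicitly reduce to finite subextensions $E\subseteq L$ before invoking the theorem; this extra bookkeeping is a reasonable precaution (since defect and the conditions (T1)--(T3) are phrased for finite extensions) but does not constitute a different argument.
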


\begin{proof}
	The extensions $(L|K,\nu)$ and $(F|K,\nu)$ satisfy the assumptions of the above theorem, hence it follows directly that $L|K$ and $F|K$ are linearly disjoint, $d(L.F|F,\nu) = 1$ and $(L.F)\nu|F\nu = L \nu . F \nu |F\nu$ is a separable extension. Further, $\nu(L.F)/\nu F = (\nu L+\nu F)/\nu F \iso \nu L/\nu K$ and hence $(L.F|F,\nu)$ is tame. Similarly, $(L.F|L,\nu)$ is purely wild. Finally, if $(L|K,\nu)$ is defectless and unramified, then $(L.F|F,\nu)$ is also defectless and unramified.  
\end{proof}

 \pars An \textbf{Artin-Schreier} polynomial $f(X)$ over a field $K$ with $\ch K =p>0$ is a polynomial of the form $f(X) = X^p-X-a$. Given a root $\th$ of $f$, one obtains all the roots of $f$, namely $\{\th+i \mid i \in \FF_p \}$. So $f(X)$ is separable over $K$, and it either splits or is irreducible over $K$. If $\th \notin K$, then $K(\th)|K$ is a Galois extension and we will say that $\th$ is an Artin-Schreier generator of $K(\th)|K$. The field $K$ is said to be Artin-Schreier closed if it does not admit any irreducible Artin-Schreier polynomials.
\pars Let $K$ be a field containing a primitive $p$-th root of unity with $\ch K=0$. A \textbf{Kummer} polynomial is a polynomial of the form $f(X):= X^p-a\in K[X]$. Thus $f(X)$ either splits or is irreducible over $K$. If $f(\th)=0$ and $\th\notin K$, then $K(\th)|K$ is a Galois extension and we say that $\th$ is a Kummer generator.

\pars For a valued field extension $(K(\th)|K,\nu)$, we define $\nu(\th-K) := \{ \nu(\th-c) \mid c \in K \}$ and $\L^L(\th,K) := \nu(\th-K) \sect \nu K$. It can be directly checked that $\L^L(\th,K)$ is an initial segment of $\nu K$. For $\g \in \nu K(\th)$ and a set $S \subseteq \nu K$, we say $\g \geq S$ if $\g \geq x$ for all $x \in K$.

 \begin{Lemma} \label{lemma distance from K}
	Let $(K(\th)|K, \nu)$ be an extension of valued fields. Then either $\nu(\th-K) = \L^L (\th,K)$ or $ \nu(\th-K) = \L^L (\th,K) \union \{\g\}$ where $\g > \L^L (\th,K)$.
\end{Lemma}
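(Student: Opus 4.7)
The plan is to examine the set $\nu(\th-K) \setminus \nu K$ and show it is either empty or a singleton $\{\g\}$ whose value strictly exceeds every element of $\L^L(\th,K)$. Since $\L^L(\th,K) = \nu(\th-K) \sect \nu K$ by definition, this dichotomy is precisely what the lemma asserts.

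First I would establish that any two values $\nu(\th-c_1)$, $\nu(\th-c_2)$ with $c_1,c_2 \in K$ that fail to lie in $\nu K$ must coincide. Writing $c_2 - c_1 = (\th - c_1) - (\th - c_2)$ and applying the ultrametric triangle inequality, if $\nu(\th-c_1) \neq \nu(\th-c_2)$ one would get $\nu(c_2-c_1) = \min\{\nu(\th-c_1),\nu(\th-c_2)\}$. But the left-hand side is an element of $\nu K$ while neither value on the right is, a contradiction. Hence $\nu(\th-K) \setminus \nu K$ has at most one element; if it is empty we are in the first case of the lemma, otherwise we denote this unique element by $\g = \nu(\th - c_0)$ for some $c_0 \in K$.

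Next I would verify $\g > \d$ for every $\d \in \L^L(\th,K)$. Such a $\d$ is of the form $\nu(\th - c)$ with $c \in K$ and $\d \in \nu K$. The equality $\d = \g$ is impossible because $\g \notin \nu K$. Suppose for contradiction that $\d > \g$; applying the ultrametric to $c - c_0 = (\th - c_0) - (\th - c)$ gives $\nu(c - c_0) = \g$, forcing $\g \in \nu K$, which is again absurd. Therefore $\d < \g$, completing the proof. The whole argument is pure ultrametric bookkeeping, so no serious obstacle is expected; the only point demanding attention is keeping straight when the strict-inequality case of the ultrametric applies versus the case of equal values.
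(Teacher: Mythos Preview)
Your proof is correct and follows essentially the same approach as the paper's: both arguments rest on the ultrametric observation that if $\nu(\th-c_1)\neq\nu(\th-c_2)$ then $\nu(c_1-c_2)=\min\{\nu(\th-c_1),\nu(\th-c_2)\}\in\nu K$, from which both the uniqueness of $\g\notin\nu K$ and its strict dominance over $\L^L(\th,K)$ follow. The only cosmetic difference is that the paper establishes maximality of $\g$ before its uniqueness, whereas you reverse the order.
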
 

\begin{proof}
	Suppose that $\nu(\th-K) \neq \L^L (\th,K)$. Let $\g = \nu(\th-t) \in \nu(\th-K) \setminus \L^L (\th,K)$ where $t \in K$. Suppose that $\g < \nu(\th-c)$ for some $c \in K$. Then we have that $\g = \nu(t-c) \in \nu K$ which is a contradiction. So $\g > \nu(\th-K) \setminus \{\g\}$. Again let $\g_1 = \nu(\th-t_1) \in \nu(\th-K) \setminus \L^L (\th,K)$ such that $\g_1 \neq \g$. Then $\text{min} \{\g, \g_1\} = \g_1 = \nu(t-t_1) \in \nu K$ which is also a contradiction. So $\nu(\th-K) = \L^L (\th,K) \union \{\g\}$ with $\g > \L^L (\th,K)$. 
\end{proof}

If $(K(\th)|K,\nu)$ is immediate, then $\nu(\th-K) = \L^L(\th,K)$ does not have a maximal element [Lemma 1.1, \ref{Kuh defect}]. The converse is not true in general, but does hold under certain special conditions. The following is Lemma 2.21 of [\ref{Kuh A-S extensions and defectless fields paper}]:
\begin{Lemma}
	Let $(K,\nu)$ be a valued field with $\ch K\nu = p>0$. Suppose that $\nu$ admits an unique extension to $K(\th)$ where $[K(\th):K]=p$. Then the fact that $\nu(\th-K)$ has no maximal element implies that $(K(\th)|K,\nu)$ is immediate.
\end{Lemma}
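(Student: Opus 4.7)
The plan is to realize $\theta$ as a pseudo-limit of a pseudo-Cauchy sequence in $K$ and then invoke Kaplansky's classical theorem on pseudo-Cauchy sequences of algebraic type. Assuming $\nu(\theta - K)$ has no maximum, Lemma \ref{lemma distance from K} forces $\nu(\theta - K) \subseteq \nu K$, and I would iteratively pick $c_n \in K$ so that $\gamma_n := \nu(\theta - c_n)$ is strictly increasing. The non-Archimedean triangle inequality then yields $\nu(c_m - c_n) = \gamma_n$ for all $m > n$, exhibiting $(c_n)_n$ as a pseudo-Cauchy sequence in $K$ with $\theta$ as a pseudo-limit.

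Next I would identify the associated polynomial of $(c_n)$ in Kaplansky's sense with a nonzero scalar multiple of the minimal polynomial $g$ of $\theta$ over $K$. Let $\theta = \theta_1, \theta_2, \ldots, \theta_p$ be the roots of $g$ in $\overline{K}$, counted with multiplicity, and fix any extension $\bar\nu$ of $\nu$ to $\overline{K}$. The unique-extension hypothesis, applied to the distinct $K$-embeddings $K(\theta) \hookrightarrow \overline{K}$ sending $\theta \mapsto \theta_i$, forces $\bar\nu(\theta_i - c) = \nu(\theta - c)$ for every $c \in K$ and every $i$. Hence $\nu g(c_n) = \sum_i \bar\nu(c_n - \theta_i) = p \gamma_n$ is strictly increasing, so $(c_n)$ is of algebraic type and $g$ is a candidate associated polynomial. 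A short Taylor-expansion argument, comparing $\nu f(c_n)$ with $\nu(f(c_n) - f(z))$ for any candidate associated polynomial $f$ and any pseudo-limit $z$ with $f(z) \neq 0$, shows that every pseudo-limit of an algebraic-type pseudo-Cauchy sequence must be a root of its associated polynomial (otherwise the fixed value $\nu f(z)$ would have to equal a strictly increasing quantity, which is impossible). In particular $\theta$ is a root of the true associated polynomial $f$, forcing $g \mid f$; since $g$ itself already makes $\nu g(c_n)$ strictly increasing and the associated polynomial is of minimal degree, we conclude $f = c g$ for some $c \in K^\times$.

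Finally, Kaplansky's theorem produces a root $\alpha$ of $g$ in some valued extension of $K$ such that $\alpha$ is a pseudo-limit of $(c_n)$ and $(K(\alpha)|K, \nu)$ is immediate. Since every root of $g$ is $K$-conjugate to $\theta$ and the unique-extension hypothesis makes any two such $K$-extensions isomorphic as valued fields, $(K(\theta)|K, \nu)$ is immediate as required. The main technical obstacle is the identification of the associated polynomial in the preceding paragraph; it rests on the unique-extension hypothesis to synchronize $\bar\nu(\theta_i - c_n)$ across the roots of $g$, and on the Taylor-expansion fact that pseudo-limits of algebraic-type pseudo-Cauchy sequences must be roots of the associated polynomial.
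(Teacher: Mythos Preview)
The paper does not prove this lemma; it merely cites it as Lemma~2.21 of Kuhlmann's Artin--Schreier classification paper, so there is no in-paper argument to compare against. Your setup is sound: $\theta$ is a pseudo-limit of a pseudo-Cauchy sequence $(c_n)$ in $K$, the unique-extension hypothesis does force $\bar\nu(\theta_i-c)=\nu(\theta-c)$ for every conjugate $\theta_i$, and hence $\nu g(c_n)=p\gamma_n$ is strictly increasing, so the sequence is of algebraic type with associated polynomial $f$ of degree at most $p$.

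The gap is in the step where you assert that \emph{every} pseudo-limit of an algebraic-type pseudo-Cauchy sequence is a root of its associated polynomial. The Taylor argument only gives a dichotomy: either $\nu f(c_n)=\nu f(z)$ eventually, or $\nu f(c_n)$ is strictly increasing with $\nu f(c_n)<\nu f(z)$ throughout. In the second case there is no contradiction when $f(z)\neq 0$; the increasing values simply stay below the fixed value $\nu f(z)$. Concretely, if $\alpha$ is Kaplansky's root-pseudo-limit and the breadth $(\gamma_n)$ is bounded above in the value group, then any $\beta$ with $\nu(\beta-\alpha)$ exceeding every $\gamma_n$ is again a pseudo-limit while $f(\beta)$ need not vanish. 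Without the identification $f=cg$ you cannot conclude that Kaplansky's immediate extension $K(\alpha)$ is $K$-isomorphic to $K(\theta)$, and the proof does not close. A repair would require an independent argument that no polynomial of degree less than $p$ has $\nu h(c_n)$ eventually non-constant (so that $\deg f=p$) together with a reason why $\theta$ itself, and not merely some degree-$p$ element, is a root of $f$; neither follows from the short Taylor computation you describe.
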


\pars We will require the following result which is Lemma 14 of [\ref{Kuh generalized stability}].
\begin{Lemma}\label{lemma C^{p-1} = p}
	Let $(K, \nu)$ be a henselian valued field with $\ch K = 0$ and $\ch K \nu = p > 0$. Then there exists $C \in K$ satisfying $C^{p-1} = -p$ if and only if $K$ contains a primitive $p$-th root of unity. 
\end{Lemma}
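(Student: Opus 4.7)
The plan is to prove both directions by manipulating the elementary identity $X^p - 1 = (X-1)\Phi_p(X)$ into a form to which Hensel's lemma applies. In both directions the key computations exploit that $\nu C = \nu(1-\zeta) = \frac{\nu p}{p-1}$ for any primitive $p$-th root of unity $\zeta$, together with Wilson's theorem $(p-1)! \equiv -1 \pmod p$ and the divisibility $p \mid \binom{p}{k}$ for $1 \leq k \leq p-1$. Both Hensel arguments produce a simple root lifting $1 \in K\nu$, so neither is an obstruction; the main care is in checking that the normalized polynomial has coefficients in $\mathfrak{O}_K$ with the expected residue.

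For the forward direction, assume $C^{p-1} = -p$. Substituting $X = 1 + CY$ in $X^p - 1$ gives
\[
(1+CY)^p - 1 \;=\; CY \cdot h(Y), \qquad h(Y) = \sum_{k=0}^{p-1} \binom{p}{k+1} C^{k} Y^{k}.
\]
The constant term of $h(Y)$ is $p$ and the leading term is $C^{p-1}Y^{p-1} = -pY^{p-1}$. Using $C^{p-1} = -p$ I divide $h(Y)$ by $-p$; the resulting polynomial $\tilde h(Y) \in \mathfrak{O}_K[Y]$ has the shape
\[
\tilde h(Y) \;=\; Y^{p-1} - 1 - \sum_{k=1}^{p-2} \frac{\binom{p}{k+1}}{p}\, C^{k} Y^{k},
\]
where each $\binom{p}{k+1}/p$ is an integer and $\nu C^{k} = k\nu p/(p-1) > 0$, so every middle coefficient has positive valuation. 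Hence $\tilde h\nu(Y) = Y^{p-1} - 1 = \prod_{a=1}^{p-1}(Y-a)$ in $K\nu[Y]$. The strong Hensel's Lemma, applied to the coprime factorization $Y - 1$ and $\prod_{a=2}^{p-1}(Y-a)$, produces $Y_0 \in \mathfrak{O}_K$ with $\tilde h(Y_0)=0$ and $Y_0\nu = 1$. Then $\zeta := 1 + CY_0 \in K$ satisfies $\zeta^p = 1$, and $\zeta \neq 1$ since $\nu(CY_0) = \nu C < \infty$, so $\zeta$ is a primitive $p$-th root of unity.

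For the converse, assume $\zeta \in K$ is a primitive $p$-th root of unity. Evaluating $\Phi_p$ at $1$ gives
\[
p \;=\; \prod_{i=1}^{p-1}(1 - \zeta^i) \;=\; (1-\zeta)^{p-1} \prod_{i=1}^{p-1} u_i, \qquad u_i := 1 + \zeta + \cdots + \zeta^{i-1},
\]
where each $u_i \in \mathfrak{O}_K$ has residue $i \in K\nu$ (since $\zeta\nu = 1$), so each $u_i$ is a unit and by Wilson's theorem $\prod_i u_i$ has residue $(p-1)! \equiv -1$. Writing $\prod u_i = -w$ with $w$ a unit of residue $1$, I obtain
\[
-p \;=\; (1-\zeta)^{p-1} \, w.
\]
The polynomial $X^{p-1} - w \in \mathfrak{O}_K[X]$ has residue $X^{p-1} - 1$ with $X - 1$ as a simple factor, so another application of Hensel's Lemma yields $w_0 \in \mathfrak{O}_K$ with $w_0^{p-1} = w$. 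Setting $C := (1-\zeta) w_0$ gives $C^{p-1} = -p$, completing the proof.

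The arguments are essentially symmetric: in both cases one reduces a shifted cyclotomic identity to a polynomial of residue $X^{p-1} - 1$ and lifts the simple root $1$ via Hensel's Lemma. The only delicate point is the integrality check in the forward direction, which hinges on the standard fact that $\binom{p}{k}/p$ is an integer for $1 \leq k \leq p-1$; once this is verified, no further obstacle arises.
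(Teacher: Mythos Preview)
Your proof is correct. Both directions are handled cleanly: the forward direction normalizes $(1+CY)^p-1$ by dividing by $-p=C^{p-1}$ to obtain a monic polynomial with residue $Y^{p-1}-1$, and the converse uses the factorization $p=\prod_{i=1}^{p-1}(1-\zeta^i)$ together with Wilson's theorem to isolate a unit $w$ with $w\nu=1$ whose $(p-1)$-th root is supplied by Hensel. The integrality checks are sound; in particular $\ZZ\subseteq\mathfrak{O}_K$ (by the ultrametric inequality applied to $1+\cdots+1$), so $\binom{p}{k+1}/p\in\mathfrak{O}_K$, and $\nu C=\nu p/(p-1)>0$ forces the middle coefficients of $\tilde h$ into $\mathfrak{M}_K$.

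There is nothing to compare against in the present paper: the lemma is not proved here but simply quoted from Lemma~14 of [\ref{Kuh generalized stability}]. Your argument is a self-contained proof along the same classical lines one finds there (substitution $X=1+CY$ to linearize the cyclotomic factor, followed by Hensel), so you may regard it as filling in what the paper omits.
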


The proof of this next lemma can be obtained in [\ref{Kuh book}].
\begin{Lemma} \label{lemma 1+a = b^p}
	Let $(K, \nu)$ be a henselian field containing all $p$-th roots of unity. Then for any $a \in K$, $\nu a > \frac{p}{p-1} \nu p$ implies that $1+a = b^p$ for some $b \in K \setminus \{0\}$. 
\end{Lemma}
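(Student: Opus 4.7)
The plan is to reduce the equation $b^p = 1+a$ to an Artin--Schreier type equation via a substitution $b = 1+Cy$, and then finish with the strong Hensel's Lemma.

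First, by Lemma \ref{lemma C^{p-1} = p} the hypothesis that $K$ contains all $p$-th roots of unity provides an element $C \in K$ with $C^{p-1} = -p$; in particular $\nu C = \frac{\nu p}{p-1}$ and $C^p = -pC$. I then seek $b \in K$ of the form $b = 1+Cy$ with $\nu y > 0$.

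Expanding $(1+Cy)^p$ by the binomial theorem and using $C^p = -pC$ to combine the terms of degree $1$ and degree $p$ in $y$, the equation $b^p = 1+a$ rearranges to $g(y) = 0$, where
\[ g(Y) := Y^p - Y - \sum_{i=2}^{p-1} \frac{\binom{p}{i}\, C^{i-1}}{p}\, Y^i + \frac{a}{pC} \in K[Y]. \]
Since $\nu\binom{p}{i} = \nu p$ for $1 \leq i \leq p-1$, the coefficient of $Y^i$ for $2 \leq i \leq p-1$ has valuation $\nu p + (i-1)\nu C - \nu p = (i-1)\nu C > 0$. The constant term satisfies $\nu(a/(pC)) = \nu a - \nu p - \nu C = \nu a - \frac{p}{p-1}\nu p$, which is strictly positive by hypothesis. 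Hence $g(Y) \in \mathfrak{O}_K[Y]$ and its reduction is
\[ g\nu(Y) = Y^p - Y = \prod_{j \in \FF_p}(Y - j) \in K\nu[Y], \]
a product of $p$ distinct linear factors in $K\nu[Y]$.

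In particular $g\nu(Y) = Y \cdot (Y^{p-1} - 1)$ is a factorization into coprime factors in $K\nu[Y]$, so by the strong Hensel's Lemma stated in the excerpt it lifts to a factorization $g(Y) = (Y - y_0)\, h(Y)$ in $\mathfrak{O}_K[Y]$ with $\nu y_0 > 0$. Setting $b := 1 + Cy_0 \in K$, we have $\nu(Cy_0) > 0$, so $\nu b = 0$ and hence $b \neq 0$, while $g(y_0) = 0$ translates back (by construction) into $b^p = 1+a$. The heart of the argument is locating the right substitution: once $C^{p-1} = -p$ is at our disposal, the hypothesis $\nu a > \frac{p}{p-1}\nu p$ is exactly what forces the constant term of $g$ to have positive valuation, so that $g\nu$ becomes the completely split polynomial $Y^p - Y$ and Hensel produces the required root.
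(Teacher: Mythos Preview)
Your argument is correct. Note, however, that the paper does not supply its own proof of this lemma: it simply cites Kuhlmann's book. So there is no ``paper's proof'' to compare against in detail.

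That said, your approach fits the paper perfectly: the substitution $X = CY + 1$ (equivalently $b = 1 + Cy$) with $C^{p-1} = -p$ is exactly the transformation the paper itself invokes later in the proof of Theorem~\ref{Theorem classification kummer} and in the discussion following it, where the minimal polynomial $X^p - a$ is converted into $Y^p + m(Y) - Y - \frac{a-1}{C^p}$ with $m(Y) \in \mathcal{M}_K[Y]$. Your polynomial $g(Y)$ is precisely of this shape (with $a$ replaced by $1+a$), and the hypothesis $\nu a > \frac{p}{p-1}\nu p = \nu(pC)$ is exactly what pushes the constant term $\frac{a}{pC}$ into $\mathcal{M}_K$, so that $g\nu(Y) = Y^p - Y$ splits and Hensel applies. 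The computation of the intermediate coefficients via $\nu\binom{p}{i} = \nu p$ is clean and correct, and the recovery of a root $y_0$ with $\nu y_0 > 0$ from the linear factor is immediate from the degree condition in the strong Hensel's Lemma as stated in the paper.

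One cosmetic remark: you might make explicit that the ambient hypothesis is $\ch K = 0$, $\ch K\nu = p > 0$ (so that Lemma~\ref{lemma C^{p-1} = p} applies and $\nu p > 0$); the paper's statement of the lemma leaves this implicit from context.
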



\section{Constructive conditions for non-uniqueness}\label{Section constructive non-uniqueness}

When the valued field $(K,\nu)$ is of positive characteristic, we had observed in Section \ref{Section Intro} that a minimal purely wild extension $(L|K,\nu)$ is generated by a root of a $p$-polynomial. In this case we can give certain constructive conditions for achieving non-uniqueness. 

We first take a closer look at arbitrary extensions generated by roots of $p$-polynomials.

\begin{Lemma}\label{Lemma extns by p-polynomials}
	Let $(K,\nu)$ be a henselian valued field with $\ch K=p>0$. Let $\mathfrak{A}(X) = \sum_{i=0}^{n} a_i X^{p^i} \in \mathfrak{O}_K[X]$. Suppose that $f(X):=\mathfrak{A}(X)-c \in K[X]$ has no roots in $K$. Let $f(\th)=0$. Further suppose that $\nu a_0=\nu a_n=0$. Then the following statements hold true:
	\sn (1) $\nu(\th-K)\leq 0$.
	\n (2) If $\nu(\th-b)=0$ for some $b\in K$ then $(K(\th)|K,\nu)$ is a defectless unramified extension with $K(\th)\nu =K\nu((\th-b)\nu)$.
	\n (3) If $\nu c=0$ then $\nu\th=0$.
\end{Lemma}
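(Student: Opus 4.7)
The plan is to exploit two structural facts about $\mathfrak{A}$ throughout: first, the residue $\mathfrak{A}\nu(X) = \sum a_i\nu\,X^{p^i} \in K\nu[X]$ has degree $p^n$ (since $\nu a_n=0$) and has $X=0$ as a \emph{simple} root (since, in characteristic $p$, the formal derivative of $\mathfrak{A}\nu$ equals $a_0\nu \neq 0$); second, $\mathfrak{A}$ is an additive polynomial because $\ch K=p$, so $\mathfrak{A}(x+y)=\mathfrak{A}(x)+\mathfrak{A}(y)$.

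Part (3) is a direct term-by-term valuation analysis of $c=\mathfrak{A}(\th)$. If $\nu\th>0$, then every term $a_i\th^{p^i}$ has valuation $\geq\nu\th>0$, forcing $\nu c>0$, a contradiction. If $\nu\th<0$, then $\nu(a_n\th^{p^n})=p^n\nu\th$ is strictly smaller than $\nu(a_i\th^{p^i})\geq p^i\nu\th$ for $i<n$, so $\nu c=p^n\nu\th<0$, again a contradiction. Hence $\nu\th=0$.

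For part (1), I argue by contradiction. Assume $\nu(\th-b)>0$ for some $b\in K$, set $\eta:=\th-b$, and use additivity to write $\mathfrak{A}(\eta)=c-\mathfrak{A}(b)=:c''\in K$. A comparison of the terms shows that $\nu c''=\nu\eta>0$ (the $i=0$ term strictly dominates). So $F(X):=\mathfrak{A}(X)-c''\in\mathfrak{O}_K[X]$ has $\deg F=\deg F\nu=p^n$, and $F\nu(X)=\mathfrak{A}\nu(X)$ factors in $K\nu[X]$ as $X\cdot G(X)$ with $G(0)\neq 0$, in particular with $X$ coprime to $G$. The strong Hensel's lemma lifts this to a factorization $F(X)=(X-\eta_0)G^*(X)$ in $\mathfrak{O}_K[X]$ with $\nu\eta_0>0$. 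Then $\eta_0+b\in K$ is a root of $f$, contradicting the hypothesis.

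Part (2) is the main technical step. With $\eta:=\th-b$ and $\nu\eta=0$, additivity shows $\eta$ is a root of $\mathfrak{A}(X)-c'\in\mathfrak{O}_K[X]$, where $c':=c-\mathfrak{A}(b)$, and $K(\eta)=K(\th)$. Since the derivative of $\mathfrak{A}\nu(X)-c'\nu$ is $a_0\nu\neq 0$, this residue polynomial is separable. Let $g(X)\in K\nu[X]$ be the minimal polynomial of $\eta\nu$; then $g$ is a simple irreducible factor of $\mathfrak{A}\nu-c'\nu$ with cofactor $q$ coprime to $g$. Applying the strong Hensel's lemma to lift this coprime factorization gives $\mathfrak{A}(X)-c' = G^*(X)Q(X)$ in $\mathfrak{O}_K[X]$ with $G^*\nu=g$, $\deg G^*=\deg g$, $Q\nu=q$, and $Q(\eta)\nu = q(\eta\nu)\neq 0$; hence $Q(\eta)$ is a unit and therefore $G^*(\eta)=0$. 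This yields $[K(\eta):K]\leq\deg g = [K\nu(\eta\nu):K\nu]$. Combining with the fundamental inequality $[K(\eta):K]\geq e\cdot f$ and $f=[K(\eta)\nu:K\nu]\geq [K\nu(\eta\nu):K\nu]$, all quantities coincide, so $e=1$, $f=\deg g=[K(\eta):K]$, the extension is defectless, the residue extension $K(\th)\nu=K\nu(\eta\nu)=K\nu((\th-b)\nu)$ is separable, and the value group is preserved, confirming unramifiedness. The crux is this degree-preserving Hensel lift; it is precisely here that both hypotheses $\nu a_0=0$ (separability of the residue) and $\nu a_n=0$ (equality of degrees) are essential.
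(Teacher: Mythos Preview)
Your proof is correct and follows essentially the same approach as the paper: additivity of $\mathfrak{A}$ to shift by $b$, separability of $\mathfrak{A}\nu$ from $\nu a_0=0$, the degree-preserving Hensel lift enabled by $\nu a_n=0$, and the degree comparison via Ostrowski. The only cosmetic differences are that you prove (3) directly rather than invoking (1) for the case $\nu\th>0$, and in (2) you lift just the irreducible factor corresponding to $(\th-b)\nu$ whereas the paper lifts the full irreducible decomposition; neither changes the substance of the argument.
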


\begin{proof}
	Observe that $\mathfrak{A}(X)$ is an additive polynomial and $\mathfrak{A}(\th) =c$. For any $b\in K$, we define
	\[ f_b(X) := f(X+b) = \mathfrak{A}(X+b)-c = \mathfrak{A}(X)-(c-\mathfrak{A}(b)). \]
	Suppose that $\nu(\th-b)>0$ for some $b\in K$. Then $\nu \mathfrak{A}(\th-b) >0$. Again, $\mathfrak{A}(\th-b) = c-\mathfrak{A}(b)$. So in this case, $f_{b}(X) \in \mathfrak{O}_K[X]$ with $f_b \nu(X) = \mathfrak{A}\nu (X)$. Now the fact that $a_0 \nu \neq 0$ implies that $f_b\nu(X)$ is separable. Hence $0$ is a simple root of $f_b\nu$ in $K\nu$. Since $(K,\nu)$ is henselian, it can be lifted to a simple root of $f_b(X)$ over $K$. But this is not possible since $f(X) = f_b(X-b)$ admits no roots in $K$. So $\nu(\th-b) \leq 0$ for each $b\in K$ and hence $\nu(\th-K) \leq 0$.
	
	\pars Now suppose that $\nu(\th-b) =0$ for some $b\in K$. With the same arguments as above, we obtain that $\nu (c-\mathfrak{A}(b)) =0$. Then $f_b\nu(X)=\mathfrak{A}\nu(X)-(c-\mathfrak{A}(b))\nu$. The fact that $a_n\nu\neq 0$ implies that $\deg f_b\nu=\deg f_b=\deg f$. From Remark \ref{Remark hensels lemma} we obtain a decomposition $f_b = f_1\dotsc f_r$ such that $f_b\nu =f_1\nu \dotsc f_r\nu$ is an irreducible decomposition of $f_b\nu$ and $\deg f_i=\deg f_i\nu$ for each $i$. Now, 
	\[ f(\th)=0 \Longrightarrow f_b(\th-b)=0 \Longrightarrow f_b\nu ((\th-b)\nu)=0. \]
	Without any loss of generality we can assume that $f_1 (\th-b)=0$. Then $f_1\nu ((\th-b)\nu)=0$ and from the irreducibility criterion it follows that $f_1\nu$ is the minimal polynomial of $(\th-b)\nu$ over $K\nu$. Then,
	\[ [K(\th):K] \leq \deg f_1=\deg f_1\nu = [K\nu((\th-b)\nu):K\nu] \leq [K(\th)\nu:K\nu] \leq [K(\th):K]. \]
	Hence all the inequalities in the above expression are equalities. From the lemma of Ostrowski it now follows that,
	\[ [K(\th):K] = [K(\th)\nu:K\nu] = [K\nu ((\th-b)\nu):K\nu] \text{ and } \nu K(\th) = \nu K. \]
	We have observed that $f_b\nu (X)$ is separable over $K\nu$ which further implies the separability of $f_1\nu$ over $K\nu$. Consequently the residue field extension $K\nu((\th-b)\nu)|K\nu$ is separable. Hence $(K(\th)|K,\nu)$ is a defectless and unramified extension with $K(\th)\nu = K\nu ((\th-b)\nu)$.
	\pars The fact that $\nu(\th-K)\leq 0$ implies that $\nu\th \leq 0$. Suppose that $\nu\th<0$. Then for any $i\neq n$, we have that,
	\[ \nu a_i \th^{p^i} =\nu a_i + p^i\nu\th \geq p^i\nu\th >p^n\nu\th. \]
	Hence $\nu\mathfrak{A}(\th)=p^n\nu\th$. Thus $\nu \mathfrak{A}(\th) =\nu c=0$ implies that $\nu\th =0$.
\end{proof}

Observe that Artin-Schreier polynomials satisfy the assumptions of Lemma \ref{Lemma extns by p-polynomials}. We can give a more detailed classification of Artin-Schreier extensions. 

\begin{Lemma}
	Let $(K(\th)|K, \nu)$ be an Artin-Schreier extension of henselian valued fields with $\ch K=p>0$ and $\th^p-\th=a\in K$. Then the following cases are possible. 
	\sn (1) $\nu(\th-K)$ has no maximal element. In this case, $(K(\th)|K, \nu)$ is an immediate Artin-Schreier extension. 
	\n (2) $\nu(\th-K)$ has $0$ as the maximal element. In this case, $K(\th)\nu | K\nu$ is an Artin-Schreier extension and $(K(\th)|K, \nu)$ is a defectless unramified extension.
	\n (3) $\nu(\th-K)$ has a maximal element $\g < 0$. In this case, $(K(\th)|K, \nu)$ is a defectless purely wild extension. 
\end{Lemma}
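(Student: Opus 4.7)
The plan is to split into the three listed cases according to the structure of $\nu(\th-K)$ supplied by Lemma \ref{lemma distance from K}: this set is either $\L^L(\th,K)$ itself, or it equals $\L^L(\th,K)\cup\{\g\}$ with $\g>\L^L(\th,K)$. Since the Artin-Schreier polynomial $X^p-X-a$ is a $p$-polynomial with $\nu a_0=\nu a_n=0$, Lemma \ref{Lemma extns by p-polynomials}(1) gives $\nu(\th-K)\leq 0$; any maximum is therefore $\leq 0$, so the three listed cases exhaust all possibilities.

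Case (1) follows at once from the immediacy criterion recorded in Section \ref{Section valn and field theoretic facts} (Lemma 2.21 of [\ref{Kuh A-S extensions and defectless fields paper}]): since $(K,\nu)$ is henselian and $[K(\th):K]=p$, the valuation $\nu$ extends uniquely to $K(\th)$, so the absence of a maximum for $\nu(\th-K)$ forces $(K(\th)|K,\nu)$ to be immediate. For case (2), pick $b\in K$ with $\nu(\th-b)=0$; Lemma \ref{Lemma extns by p-polynomials}(2) then gives that $(K(\th)|K,\nu)$ is defectless unramified with $K(\th)\nu=K\nu((\th-b)\nu)$. Setting $\eta:=\th-b$, we compute $\eta^p-\eta = a+b-b^p =: c\in\mathcal O_K$, so passage to residues yields the Artin-Schreier equation $\bar\eta^{\,p}-\bar\eta=\bar c$ over $K\nu$. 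If $\bar\eta$ were in $K\nu$, lifting it to some $e\in\mathcal O_K$ would give $\nu(\th-b-e)>0$, contradicting the maximality of $0$; hence $X^p-X-\bar c$ is irreducible over $K\nu$ and $K(\th)\nu = K\nu(\bar\eta)$ is an Artin-Schreier extension.

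For case (3), set $\eta:=\th-b$ where $\g=\nu(\th-b)<0$; then $\eta^p-\eta=c\in K$ and $\nu\eta^p=p\g<\g$ force $\nu c=p\g$. I would split on whether $\g\in\nu K$. If $\g\notin\nu K$, then $p\g\in\nu K$ shows $\g$ has order $p$ in $\nu K(\th)/\nu K$, so $[\nu K(\th):\nu K]=p$; together with $[K(\th):K]=p$, Ostrowski yields defect $1$ and trivial residue extension, making $(K(\th)|K,\nu)$ a totally ramified defectless purely wild extension. If instead $\g\in\nu K$, pick $d\in K$ with $\nu d=\g$ and set $u:=\eta/d$; then $u^p = u/d^{p-1} + c/d^p$ with $\nu(1/d^{p-1}) = -(p-1)\g > 0$ and $\nu(c/d^p) = 0$, so $\bar u^{\,p} = \overline{c/d^p}$ in $K(\th)\nu$. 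If $\overline{c/d^p}=\bar\a^p$ for some $\bar\a\in K\nu$, lifting $\bar\a$ to $\a\in\mathcal O_K$ would yield $\nu(\th-b-d\a)>\g$, contradicting maximality; therefore $X^p-\overline{c/d^p}$ is irreducible over $K\nu$, so $[K(\th)\nu:K\nu]=p$, and by Ostrowski the defect is $1$ with purely inseparable residue extension of degree $p$, giving again defectless purely wild.

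The most delicate step I anticipate is the contradiction-of-maximality argument in the subcase $\g\in\nu K$ of case (3): one must verify that a hypothetical $p$-th root of $\overline{c/d^p}$ in $K\nu$ genuinely produces an element of $K$ approximating $\th$ more closely than $b$, which relies on $\g$ being realized as an actual valuation from $K$. The other computational steps amount to bookkeeping with the transformation $\eta=\th-b$ and careful bookkeeping of valuations of the coefficients after this shift.
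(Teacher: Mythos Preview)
Your proposal is correct and follows essentially the same route as the paper: case (1) via the immediacy criterion, case (2) via Lemma \ref{Lemma extns by p-polynomials}(2), and case (3) by splitting on whether $\g\in\nu K$ and, in the $\g\in\nu K$ subcase, normalizing to a unit and deriving a contradiction to maximality from a hypothetical residue in $K\nu$. The only cosmetic differences are that you spell out explicitly why the residue extension in case (2) is Artin-Schreier (the paper leaves this implicit in its citation of Lemma \ref{Lemma extns by p-polynomials}), and in case (3) you phrase the contradiction via a hypothetical $p$-th root of $\overline{c/d^p}$ rather than directly assuming $\bar u\in K\nu$---these are equivalent since $p$-th roots are unique in characteristic $p$.
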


\begin{proof}
	$(1)$ is a direct application of [Lemma 2.21, \ref{Kuh A-S extensions and defectless fields paper}].
	\pars $(2)$ follows from Lemma \ref{Lemma extns by p-polynomials}.
	\pars Now suppose that $\g = \nu(\th-t)$ is the maximal value of $\nu(\th-K)$ with $\g <0$. Then $\nu(a-t^p+t) = \nu((\th-t)^p-(\th-t))= p \g \in \nu K$. If $\g \notin \nu K$ then we have $[\nu K(\th) : \nu K] = p$ and thus $\nu K(\th)/\nu K$ is a $p$-group. Hence $(K(\th)|K, \nu)$ is defectless and purely wild. 
	\newline Now suppose that $\g = \nu(\th-t)\in \nu K$ with $\g = \nu c$ for some $c \in K$. Let $\eta = \frac{\th-t}{c}$. Thus $\nu (\eta) = 0$ and $\eta \nu \in K(\th)\nu $. We have,
	\[ \nu(\eta^p - \frac{a-t^p+t}{c^p}) = \nu (\frac{\th-t}{c^p}) = (1-p) \g > 0  \]
	and hence $\eta^p \nu = (\frac{a-t^p+t}{c^p}) \nu \in K \nu$. If $\eta \nu \in K \nu$, then $\nu((\frac{\th -t}{c}) -b) > 0$ for some $b \in K$. But this implies that $\nu(\th-t-bc) > \nu c = \g$ which contradicts the maximality of $\g$. So $\eta \nu \notin K \nu$. Since $\eta^p \nu \in K \nu$, we thus have that $K(\th)\nu = K \nu (\eta \nu)$ and $K(\th)\nu | K \nu$ is a purely inseparable extension of degree $p$. So $(K(\th)|K, \nu)$ is a defectless purely wild extension.
\end{proof}

\begin{Corollary}
	Let $(K(\th)|K,\nu)$ be an Artin-Schreier extension of henselian discretely valued fields of rank $1$ with $\ch K=p>0$. Then $(K(\th)|K, \nu)$ is defectless.
\end{Corollary}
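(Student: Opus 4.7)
The plan is to invoke the preceding Artin-Schreier classification lemma: its cases (2) and (3) are already shown to be defectless, so it suffices to rule out case (1), the immediate case, under the additional assumption that $\nu K\iso\ZZ$.

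Assume for contradiction that $(K(\th)|K,\nu)$ is immediate. Then the preceding lemma together with Lemma \ref{lemma distance from K} forces $\nu(\th-K)=\L^L(\th,K)$ to be an initial segment of $\nu K$ without a maximum, and the only such initial segment of $\ZZ$ is $\ZZ$ itself. Thus I pick a sequence $\{c_n\}\subseteq K$ with $\nu(\th-c_n)\to\infty$; by the ultrametric inequality, $\nu(c_n-c_m)\geq\min\{\nu(\th-c_n),\nu(\th-c_m)\}$, so $\{c_n\}$ is Cauchy in $K$. Extending $\nu$ uniquely to $K(\th)$ and passing to the completion $\widehat{K(\th)}$, the closure of $K$ inside $\widehat{K(\th)}$ is naturally identified with the completion $\hat K$, and $\{c_n\}$ converges to some $\hat c\in\hat K$. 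Continuity of the valuation forces $\th=\hat c$, so $\th\in\hat K$.

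Since $X^p-X-a$ has derivative $-1$, it is separable, so $\th$ is separable algebraic over $K$. By the classical identification of the henselization $K^h$ with the relative separable algebraic closure of $K$ in $\hat K$, and since $K$ is henselian (whence $K=K^h$), I conclude $\th\in K$, contradicting the nontriviality of the Artin-Schreier extension $K(\th)|K$.

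The main obstacle is the invocation of $K^h=K^\sep\cap\hat K$, a classical fact in valuation theory that should be cited explicitly; a more self-contained alternative is to first verify that the base change $(\hat K(\th)|\hat K,\nu)$ is still immediate and then invoke Krull's theorem that a complete discretely valued field of rank $1$ is maximal to force $\hat K(\th)=\hat K$, yielding the same conclusion $\th\in\hat K$ and then $\th\in K$.
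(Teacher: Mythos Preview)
Your argument is correct, and in fact it does more than the paper: the paper gives no proof for this corollary but simply remarks afterward that it is a special case of the classical fact that every finite separable extension of a discretely valued field of rank~$1$ is defectless.

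That said, your route through the completion is longer than necessary. Once you have established that $\nu(\th-K)=\L^L(\th,K)$ is a nonempty initial segment of $\ZZ$ without a maximum, and hence equals $\ZZ$, you already have a contradiction without ever passing to $\hat K$: Lemma~\ref{Lemma extns by p-polynomials} (applied to the Artin-Schreier polynomial $X^p-X-a$, which visibly satisfies its hypotheses with $a_0=-1$, $a_n=1$) gives $\nu(\th-K)\leq 0$, which is incompatible with $\nu(\th-K)=\ZZ$. Equivalently, any nonempty subset of $\ZZ$ that is bounded above has a maximum, so case~(1) of the classification is excluded outright when $\nu K\iso\ZZ$. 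Your completion argument and the alternative via maximality of complete discrete rank~$1$ fields are both valid, but they reprove by hand what the bound $\nu(\th-K)\leq 0$ already gives for free.
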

Indeed a more general result actually holds. It is known that any finite separable extension over a discretely valued field of rank $1$ is a defectless extension. The conclusions of the above corollary fail to hold for non-discrete valuations, even in the rank one case. We provide an example illustrating that below. 

\begin{Example}
	Let $(K, \nu)$ be the power series field $\FF_{p} ((t))$ equipped with the $t$-adic valuation. Then $\nu K = \ZZ$ and $K \nu = \FF_p$. Let $(L, \nu)$ be the Puiseux series field equipped with the canonical valuation. Then $(L, \nu)$ is henselian, $\nu L = \QQ$ and $L \nu = \FF_p$. Let $\th \in L^{\ac}$ be such that $\th^p - \th = \frac{1}{t}$. Then $(L(\th)|L, \nu)$ is an immediate extension [cf. Example 15, \ref{Kuh vln model}]. Note that $(L, \nu)$ is a non-discrete valuation of rank $1$. 
\end{Example}
The conclusions of the corollary also fail to hold for discrete valuations of higher rank. An example is furnished at Example 3.18 of [\ref{Kuh defect}].

\pars We now show that if a henselian valued field of positive characteristic admits a minimal purely wild extension satisfying the assumptions of Lemma \ref{Lemma extns by p-polynomials}, then under an additional hypothesis on the residue field, we achieve non-uniqueness of maximal purely wild extensions. 

\begin{Theorem}\label{Theorem non-unique eq char}
	Let $(K(\th)|K,\nu)$ be a minimal purely wild extension of henselian valued fields with $\ch K=p>0$. We can consider the minimal polynomial of $\th$ over $K$ to be of the form $f(X) = \mathfrak{A}(X) -c$, where
	\[ \mathfrak{A}(X) := \sum_{i=0}^{n} a_i X^{p^i} \in \mathfrak{O}_K[X]. \]
	Suppose that $\nu a_0=\nu a_n=0$ and that $K\nu$ is not $\mathfrak{A}\nu$-closed. Then $(K,\nu)$ does not admit a unique maximal purely wild extension. 
\end{Theorem}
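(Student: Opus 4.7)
\par The strategy is to manufacture, from the assumption that $K\nu$ is not $\mathfrak{A}\nu$-closed, a second purely wild extension $(K(\alpha)|K,\nu)$ that cannot lie together with $K(\theta)$ inside any single maximal purely wild extension of $K$; the existence theorem of Pank cited in the introduction then supplies two distinct such extensions. Concretely, I would fix $t\in\mathfrak{O}_K$ with $t\nu\in K\nu\setminus\mathfrak{A}\nu(K\nu)$ and let $\beta$ be a root of $h(X):=\mathfrak{A}(X)-t$. A direct valuation estimate, using $\nu a_n=0$ together with the residue condition on $t$, rules out roots of $h$ in $K$, so $\beta\notin K$; applying Lemma \ref{Lemma extns by p-polynomials}(2) with $b=0$ then shows that $(K(\beta)|K,\nu)$ is a nontrivial defectless unramified, hence tame, extension. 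Setting $\alpha:=\theta+\beta$, the additivity of $\mathfrak{A}$ yields $\mathfrak{A}(\alpha)=c+t$. Since the purely wildness of $(K(\theta)|K,\nu)$ rules out the defectless unramified conclusion of Lemma \ref{Lemma extns by p-polynomials}(2) for $\theta$, part (1) of that lemma (taking $b=0$) forces $\nu\theta<0$ and hence $\nu c<0$; as $\nu t\geq 0$, we conclude $c+t\neq 0$, and $\alpha$ is a root of the degree $p^n$ polynomial $g(X):=\mathfrak{A}(X)-(c+t)$.

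\par The main step is to verify that $(K(\alpha)|K,\nu)$ is purely wild, i.e.\ that $K(\alpha)\cap K^r=K$. By Lemma \ref{Lemma valn disj tame and purely wild}, the tame extension $K(\beta)|K$ and the purely wild extension $K(\theta)|K$ are linearly disjoint over $K$, so $[K(\theta,\beta):K]=p^n[K(\beta):K]$. Combining this with $K(\alpha,\beta)=K(\theta,\beta)$, the elementary bound $[K(\alpha,\beta):K(\alpha)]\leq[K(\beta):K]$, and $[K(\alpha):K]\leq p^n$ forces $[K(\alpha):K]=p^n$ together with the linear disjointness of $K(\alpha)$ and $K(\beta)$ over $K$; in particular $K(\alpha)\cap K(\beta)=K$. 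Next, I would identify the maximal tame subextension of $K(\theta,\beta)|K$ as $K(\beta)$: for any tame $T\subseteq K(\theta,\beta)\cap K^r$ the compositum $T.K(\beta)$ is tame over $K(\beta)$, yet $(K(\theta,\beta)|K(\beta),\nu)$ is purely wild by Lemma \ref{Lemma valn disj tame and purely wild}, so $T.K(\beta)=K(\beta)$ and $T\subseteq K(\beta)$. Therefore
\[ K(\alpha)\cap K^r \subseteq K(\theta,\beta)\cap K^r = K(\beta), \]
and intersecting with $K(\alpha)$ gives $K(\alpha)\cap K^r=K(\alpha)\cap K(\beta)=K$, as required.

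\par To conclude, Pank's theorem embeds $K(\theta)$ and $K(\alpha)$ in maximal purely wild extensions $W_1$ and $W_2$ of $K$, respectively. Were $W_1=W_2=W$, then $\beta=\alpha-\theta\in W$ would place the nontrivial tame extension $K(\beta)$ inside $W$, contradicting $W\cap K^r=K$; hence $W_1\neq W_2$, and $(K,\nu)$ admits at least two non-isomorphic maximal purely wild extensions. The main technical obstacle is the purely wildness of $(K(\alpha)|K,\nu)$; the degree count that produces the linear disjointness of $K(\alpha)$ and $K(\beta)$ over $K$ and the identification of the maximal tame subextension of $K(\theta,\beta)|K$ are the load-bearing steps.
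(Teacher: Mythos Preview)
Your overall architecture matches the paper's: pick $t$ with $t\nu\notin\mathfrak{A}\nu(K\nu)$, set $g(X)=\mathfrak{A}(X)-(c+t)$ and $h(X)=\mathfrak{A}(X)-t$, and exploit the additivity of $\mathfrak{A}$ to link roots of $f$, $g$, and $h$. Your argument that $(K(\alpha)|K,\nu)$ is purely wild---via identifying $K(\theta,\beta)\cap K^r=K(\beta)$ and using the degree count to get $K(\alpha)$ and $K(\beta)$ linearly disjoint, hence $K(\alpha)\cap K^r\subseteq K(\alpha)\cap K(\beta)=K$---is a legitimate alternative to the paper's route, which instead computes separable degrees of residue field extensions through $K(\alpha,\theta)\nu$ to show $K(\alpha)\nu|K\nu$ is purely inseparable.

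There is, however, a gap in your final paragraph. Uniqueness of maximal purely wild extensions means uniqueness \emph{up to $K$-isomorphism}, and your argument only yields $W_1\neq W_2$ as subfields of $K^\ac$; the jump to ``two non-isomorphic maximal purely wild extensions'' is unjustified. To conclude, you must exclude a $K$-isomorphism $\sigma:W_2\to W_1$. If one existed, $\sigma(\alpha)\in W_1$ would be a root of $g$, and then $\sigma(\alpha)-\theta\in W_1$ would be a root of $h$; but \emph{every} root $\beta'$ of $h$ satisfies $\nu\beta'=0$ by Lemma~\ref{Lemma extns by p-polynomials}(3), hence generates a nontrivial defectless unramified extension of $K$ by Lemma~\ref{Lemma extns by p-polynomials}(2) with $b=0$, so $\beta'\notin W_1$. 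The paper makes precisely this move: it fixes a single maximal purely wild $W\supseteq K(\theta)$, shows that \emph{no} root of $g$ lies in $W$, and then argues that uniqueness up to isomorphism would force some $K$-conjugate of $\alpha$ into $W$. Your ingredients already suffice for this; you just need to run the ``$\beta\notin W_1$'' step for an arbitrary root of $h$, not only for the chosen $\beta$.
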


\begin{proof}
	Let the polynomial $\mathfrak{A}\nu(X)-t\nu$ does not admit any roots in $K\nu$ for some $t\in K$. Thus $\nu t=0$. Let us define $g(X):= \mathfrak{A}(X)-(c+t)$ and $h(X):= \mathfrak{A}(X)-t$. Let $\a\in K^\ac$ such that $g(\a) =0$. Then,
	\[ \mathfrak{A}(\a-\th) = \mathfrak{A}(\a)-\mathfrak{A}(\th) = t \Longrightarrow h(\a-\th)=0. \]
	Suppose that $h(X)$ admits a root $\b\in K$. Then the fact that $\nu t=\nu a_n=0$ implies that $\nu\b=0$, which contradicts that $h\nu(X)$ does not admit any roots in $K\nu$. Hence $h(X)$ does not admit any roots in $K$. From Lemma \ref{Lemma extns by p-polynomials} we then obtain that $\nu(\a-\th)=0$ and that $(K(\a-\th)|K,\nu)$ is a defectless unramified extension with $K(\a-\th)\nu=K\nu((\a-\th)\nu)$. Thus $(K(\th)|K,\nu)$ and $(K(\a-\th)|K,\nu)$ are valuation disjoint extensions. From Lemma \ref{Lemma valn disj tame and purely wild} we now obtain that $(K(\a,\th)|K(\th),\nu)$ is a defectless and unramified extension. Further,
	\[ [K(\a,\th):K(\th)]=[K(\a-\th):K]=[K\nu((\a-\th)\nu):K\nu]. \]
	\pars Now suppose that $f(X)$ admits a root $\s\th$ in $K(\a)$ for some $\s\in\Aut(K^\ac|K)$. The fact that $(K,\nu)$ is a henselian valued field implies that any $K$-automorphism of $K^\ac$ preserves the valuation $\nu$. So $(K(\s\th)|K,\nu)$ is also a minimal purely wild extension with the same minimal polynomial $f(X)$ over $K$. So without any loss of generality we can assume that $\th\in K(\a)$. Then $K(\th)\subseteq K(\a)$. Hence,
	\[ [K(\a):K]\leq\deg g=\deg f=[K(\th):K]\Longrightarrow K(\a)=K(\th). \] 
	But this implies that $K(\a-\th)=K$ which contradicts the fact that $h(X)$ does not admit any roots in $K$. Hence $f(X)$ does not admit any roots in $K(\a)$. From Lemma \ref{Lemma extns by p-polynomials} we now obtain that $(K(\a,\th)|K(\a),\nu)$ is a defectless unramified extension with $K(\a,\th)\nu=K(\a)\nu((\a-\th)\nu)$.
	\pars Now,
	\[ [K(\a,\th):K]=[K(\a,\th):K(\th)][K(\th):K]=[K(\a,\th):K(\a)][K(\a):K]. \]
	We have that $[K(\a):K]\leq [K(\th):K]$. Again, 
	\[  [K(\a,\th):K(\a)] = [K(\a)\nu((\a-\th)\nu):K(\a)\nu] \leq [K\nu((\a-\th)\nu):K\nu]= [K(\a,\th):K(\th)]. \]
	Thus we have the following equalities, 
	\[ [K(\th):K]=[K(\a):K] ,\, \, [K(\a,\th):K(\th)]=[K(\a,\th):K(\a)]. \]
	\pars We consider the separable degree of the extension $K(\a,\th)\nu|K\nu$, denoted by $[K(\a,\th)\nu:K\nu]_{\sep}$. From the multiplicative property of the separable degree [Theorem 4.1, Chapter V, \S 4, \ref{Lang}], we obtain that,
	\[ [K(\a,\th)\nu:K\nu]_{\sep} = [K(\a,\th)\nu:K(\th)\nu]_{\sep} [K(\th)\nu:K\nu]_{\sep} = [K(\a,\th)\nu:K(\a)\nu]_{\sep}[K(\a)\nu:K\nu]_{\sep}. \]
	Now, $K(\th)\nu|K\nu$ is purely inseparable, hence has separable degree one. Further, $(K(\a,\th)|K(\th),\nu)$ and $(K(\a,\th)|K(\a),\nu)$ are defectless and unramified extensions. The above expression now reads as,  
	\[ [K(\a,\th)\nu:K\nu]_{\sep} = [K(\a,\th):K(\th)] = [K(\a,\th):K(\a)][K(\a)\nu:K\nu]_{\sep}. \]
	Thus $[K(\a)\nu:K\nu]_{\sep} = 1$, that is, $K(\a)\nu|K\nu$ is purely inseparable. Again, $\nu K(\th)=\nu K(\th,\a)=\nu K(\a)$. So, $\nu K(\a)/\nu K$ is a $p$-group and hence $(K(\a)|K,\nu)$ is a purely wild extension.
	\pars The purely wild extension $(K(\th)|K,\nu)$ is embedded in some maximal purely wild extension $(W|K,\nu)$. We now consider the diagram,
	\[ \begin{tikzcd}[row sep= 2ex, column sep=0.8em]
	K(\a) \ar[r, symbol=\subset] & K(\a,\th) \ar[r, symbol=\subset] &W(\a)  \\
	K \ar[u, symbol=\subset] \ar[r, symbol=\subset] &K(\th) \ar[u, symbol=\subset] \ar[r, symbol=\subset] &W  \ar[u, symbol=\subset] \\
	\end{tikzcd}. \]
	$(W|K(\th),\nu)$ is a purely wild extension and $(K(\th,\a)|K(\th),\nu)$ is a defectless unramified extension. So the extensions are valuation disjoint. From Lemma \ref{Lemma valn disj tame and purely wild} we observe that $(W(\a)|W,\nu)$ is a defectless unramified extension and $[W(\a):W] = [K(\th,\a):K(\th)]$. Thus $\a\notin W$. Since this is true for any $\a$ such that $g(\a)=0$, we conclude that any conjugate of $\a$ also does not lie in $W$.
	\pars Suppose that $(W,\nu)$ is a unique maximal purely wild extension up to isomorphisms over $K$. Since $(K(\a)|K,\nu)$ is a purely wild extension, it is embedded in some maximal purely wild extension. Hence there exists $\o\in\Aut(K^\ac|K)$ such that $\o\a\in W$, which is a contradiction. Thus $(K,\nu)$ does not admit a unique maximal purely wild extension. 
\end{proof}

Suppose that in the setting of the previous theorem, we have that $(W|K,\nu)$ is a maximal immediate algebraic extension and $(K(\th)|K,\nu)$ is a minimal immediate subextension. Since immediate extensions are purely wild, the extensions $(K(\a,\th)|K(\th),\nu)$ and $(K(\a,\th)|K(\a),\nu)$ are still defectless and unramified extensions. From the multiplicative property of the defect, we obtain that
\[ [K(\th):K] = d(K(\th)|K,\nu) = d(K(\th,\a)|K,\nu) = d(K(\a)|K,\nu) \leq [K(\a):K] =[K(\th):K]. \]
Hence $(K(\a)|K,\nu)$ is an immediate extension. We have now arrived at the following result.

\begin{Theorem}
	Let notations and assumptions be as in Theorem \ref{Theorem non-unique eq char}. Also suppose that $(K(\th)|K,\nu)$ is a minimal immediate extension. Then $(K,\nu)$ does not admit a unique maximal immediate algebraic extension. 
\end{Theorem}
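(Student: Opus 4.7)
The statement asks us to upgrade the conclusion of Theorem \ref{Theorem non-unique eq char} from non-uniqueness of maximal purely wild extensions to non-uniqueness of maximal immediate algebraic extensions, under the additional hypothesis that $(K(\th)|K,\nu)$ is itself immediate. My plan is to re-run the construction in the proof of Theorem \ref{Theorem non-unique eq char} verbatim, then use a short defect computation to promote the purely wild extension $(K(\a)|K,\nu)$ built there to an immediate one, and finally repeat the valuation-disjointness argument to derive a contradiction with hypothetical uniqueness.

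First I would produce, exactly as in the proof of Theorem \ref{Theorem non-unique eq char}, an element $\a$ with $g(\a)=0$ where $g(X) := \mathfrak{A}(X)-(c+t)$. Since immediate extensions are purely wild, every step of that proof remains valid: Lemma \ref{Lemma extns by p-polynomials} still gives $\nu(\a-\th)=0$ and that $(K(\a-\th)|K,\nu)$ is defectless and unramified, Lemma \ref{Lemma valn disj tame and purely wild} then shows that both $(K(\a,\th)|K(\th),\nu)$ and $(K(\a,\th)|K(\a),\nu)$ are defectless unramified, and the degree bookkeeping in Theorem \ref{Theorem non-unique eq char} yields $[K(\a):K]=[K(\th):K]$.

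Next I would apply multiplicativity of the defect along the two towers $K \subseteq K(\th) \subseteq K(\a,\th)$ and $K \subseteq K(\a) \subseteq K(\a,\th)$, obtaining
\[
d(K(\th)|K,\nu) = d(K(\a,\th)|K,\nu) = d(K(\a)|K,\nu).
\]
The immediacy of $(K(\th)|K,\nu)$ gives $d(K(\th)|K,\nu) = [K(\th):K] = [K(\a):K]$, and so $d(K(\a)|K,\nu) = [K(\a):K]$. By the Lemma of Ostrowski this forces $\nu K(\a) = \nu K$ and $K(\a)\nu = K\nu$, so $(K(\a)|K,\nu)$ is immediate.

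Finally I would embed $(K(\th)|K,\nu)$ into a maximal immediate algebraic extension $(W|K,\nu)$ and replay the concluding argument of Theorem \ref{Theorem non-unique eq char}: since $(K(\a,\th)|K(\th),\nu)$ is defectless unramified of degree $>1$ and $(W|K(\th),\nu)$ is immediate, these two extensions are valuation disjoint, so by Lemma \ref{Lemma valn disj tame and purely wild} we have $(W(\a)|W,\nu)$ defectless unramified of the same degree $>1$; in particular $\a \notin W$, and the same holds for every $K$-conjugate of $\a$. If $(K,\nu)$ admitted a unique maximal immediate algebraic extension, then embedding the immediate extension $K(\a)$ into some maximal immediate extension and composing with an isomorphism over $K$ with $W$ would place a $K$-conjugate of $\a$ inside $W$, a contradiction. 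The sole work beyond Theorem \ref{Theorem non-unique eq char} is the defect bookkeeping upgrading purely wild to immediate, so no genuine obstacle arises.
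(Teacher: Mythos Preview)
Your proposal is correct and follows essentially the same route as the paper: you re-invoke the construction of Theorem \ref{Theorem non-unique eq char}, use multiplicativity of defect along the two towers to obtain $d(K(\a)|K,\nu)=d(K(\th)|K,\nu)=[K(\th):K]=[K(\a):K]$ and hence that $(K(\a)|K,\nu)$ is immediate, and then replay the final valuation-disjointness argument with ``immediate'' in place of ``purely wild''. One small remark: in your recap you attribute the defectlessness and unramifiedness of $(K(\a,\th)|K(\a),\nu)$ to Lemma \ref{Lemma valn disj tame and purely wild}, but at that stage $(K(\a)|K,\nu)$ is not yet known to be purely wild; as in the proof of Theorem \ref{Theorem non-unique eq char}, this step actually comes from Lemma \ref{Lemma extns by p-polynomials} applied over $K(\a)$, and since you are running that proof verbatim this does not affect correctness.
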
 

If the extension $W|K$ is separable, then the previous two theorems are equivalent in light of the next proposition. 

\begin{Proposition}
	Let $(K,\nu)$ be a henselian valued field with $\ch K=p>0$. Suppose that $(K,\nu)$ admits a nontrivial maximal purely wild extension $(W|K,\nu)$ such that $W|K$ is separable. Then $K$ is perfect. Consequently the maximal purely wild extensions of $(K,\nu)$ correspond to the maximal immediate algebraic extensions of $(K,\nu)$.  
\end{Proposition}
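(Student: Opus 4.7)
The plan is to first deduce the perfectness of $K$ from the separability hypothesis, and then derive the correspondence between maximal purely wild and maximal immediate algebraic extensions as a purely formal consequence.

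For the first part, I would exploit the defining property of a maximal purely wild extension recorded in Theorem 1.2, namely $W.K^r=K^\ac$. Since the absolute ramification field satisfies $K^r\subseteq K^\sep$, and by hypothesis $W\subseteq K^\sep$, the compositum satisfies $W.K^r\subseteq K^\sep$. Combining these inclusions gives $K^\ac=W.K^r\subseteq K^\sep$, so $K^\ac=K^\sep$, which is exactly the statement that $K$ is perfect.

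For the consequence, I would first observe that if $K$ is perfect of characteristic $p$, then $\nu K$ is $p$-divisible and $K\nu$ is perfect. Indeed, for any $a\in K^*$ writing $a=b^p$ with $b\in K$ yields $\nu a=p\nu b$, so $\nu K=\frac{1}{p^\infty}\nu K$; and if in addition $a\in\mathfrak{O}_K$ then $\nu b\geq 0$, whence $b\in\mathfrak{O}_K$ and $a\nu=(b\nu)^p$, showing that $K\nu$ is perfect. Now for any algebraic purely wild extension $(M|K,\nu)$, the fact that $\nu M/\nu K$ is a $p$-group forces $\nu M\subseteq\frac{1}{p^\infty}\nu K=\nu K$, while the pure inseparability of $M\nu|K\nu$ forces $M\nu\subseteq (K\nu)^{\frac{1}{p^\infty}}=K\nu$. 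Hence $(M|K,\nu)$ is immediate. Conversely, the introduction already records that algebraic immediate extensions are purely wild. Thus the two classes coincide at the algebraic level, and consequently so do their maximal objects.

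The only substantive step is the opening observation that $W.K^r=K^\ac$ together with $W\subseteq K^\sep$ collapses $K^\ac$ onto $K^\sep$; I do not anticipate any further obstacle, as the remaining deductions amount to elementary manipulations with $p$-th roots and the definitions of purely wild and immediate extensions.
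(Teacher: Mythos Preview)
Your proof is correct, and the second half (deducing the correspondence between maximal purely wild and maximal immediate algebraic extensions once $K$ is known to be perfect) matches the paper's argument essentially verbatim.

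For the first assertion, however, you take a different route than the paper. The paper argues directly from the fact that purely inseparable extensions are purely wild: if $K$ were not perfect, then $K^{1/p}\supsetneq K$ would be a nontrivial purely inseparable (hence purely wild) extension, and since $W|K$ is separable we would have $K^{1/p}\not\subseteq W$; but then $W.K^{1/p}|K$ is a purely wild extension strictly containing $W$, contradicting maximality. Your argument instead invokes the identity $W.K^r=K^\ac$ and combines it with $W,K^r\subseteq K^\sep$ to force $K^\ac=K^\sep$. This is valid, but note a small imprecision: Theorem~1.1 as stated is an \emph{existence} result---it asserts that \emph{some} purely wild $L$ satisfies $L.K^r=K^\ac$, not that every maximal purely wild extension does. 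The fact that an arbitrary maximal purely wild $W$ satisfies $W.K^r=K^\ac$ is true, but it comes from the complement characterization in [Kuhlmann--Pank--Roquette, Theorem~4.3] rather than from Theorem~1.1 alone (and, amusingly, one clean way to verify it passes through $K^{1/p^\infty}\subseteq W$, which is precisely the paper's observation). So the paper's route is slightly more self-contained, while yours leans on a deeper structural fact; both reach the conclusion in one line once the relevant input is granted.
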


\begin{proof}
	The first assertion is immediate since purely inseparable extensions of valued fields are purely wild. So $K$ is perfect and hence $\nu K = \frac{1}{p^\infty} \nu K$ and $K\nu = K\nu^{\frac{1}{p^\infty}}$. Now any maximal purely wild extension $(M|K,\nu)$ satisfies that $\nu M = \frac{1}{p^\infty} \nu K$ and $M\nu = K\nu^{\frac{1}{p^\infty}}$. Thus $(M|K,\nu)$ is immediate. Conversely any purely wild extension $(L|K,\nu)$ satisfies the fact that $\nu L/\nu K$ is a $p$-group and $L\nu|K\nu$ is purely inseparable. Thus $(L|K,\nu)$ is immediate. The result now follows. 
\end{proof}

\parb We now shift our attention to henselian valued fields of mixed characteristic. We first investigate purely wild extensions $(K(\th)|K,\nu)$ where the minimal polynomial of $\th$ over $K$ is of the form $\mathfrak{A}(X)-c$ as presented in Theorem \ref{Theorem non-unique eq char}. The Frobenius endomorphism holds over fields of positive characteristic which accounts for the additivity of $\mathfrak{A}(X)$ when $\ch K >0$. Even though it fails to hold when $\ch K=0$, we observe that for sufficiently large values of the generator $\th$, we can obtain results analogous to Theorem \ref{Theorem non-unique eq char}.

\begin{Lemma}\label{Lemma extns by p-polynomials mixed char}
	Let $(K,\nu)$ be a henselian valued field with $\ch K=0$ and $\ch K\nu=p>0$. Let $\mathfrak{A}(X) = \sum_{i=0}^{n} a_i X^{p^i} \in \mathfrak{O}_K[X]$. Suppose that $f(X):=\mathfrak{A}(X)-c \in K[X]$ has no roots in $K$. Let $f(\th)=0$. Further suppose that $\nu a_0=\nu a_n=0$ and $\nu\th \geq -\frac{\nu p}{p^n}$. Then the following statements hold true:
	\sn (1) $\nu(\th-K)\leq 0$.
	\n (2) If $\nu(\th-b)=0$ for some $b\in K$ then $(K(\th)|K,\nu)$ is a defectless unramified extension with $K(\th)\nu =K\nu((\th-b)\nu)$.
	\n (3) If $\nu c=0$ then $\nu\th=0$. 
\end{Lemma}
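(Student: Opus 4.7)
The plan is to mirror the positive-characteristic proof of Lemma \ref{Lemma extns by p-polynomials}. There, additivity of $\mathfrak{A}$ gave the clean identity $f(X+b) = \mathfrak{A}(X) - (c - \mathfrak{A}(b))$, whose residue is $\mathfrak{A}\nu(X)$ when $b$ reduces appropriately, and the conclusion followed from Hensel's lemma applied to $\mathfrak{A}\nu$. In mixed characteristic $\mathfrak{A}$ is no longer additive, but I will show that the hypothesis $\nu\th \geq -\nu p/p^n$ is precisely what forces additivity of $\mathfrak{A}$ modulo $\mathcal{M}_K$ after any admissible shift, so the same machine goes through with only minor modifications.

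Concretely, for $b \in K$ I set $f_b(X) := f(X+b) = \sum_{k=0}^{p^n} c_k X^k$ and expand via the binomial theorem. The coefficient of $X^{p^j}$ is $c_{p^j} = a_j + \sum_{i > j} a_i \binom{p^i}{p^j} b^{p^i - p^j}$, and for $k \geq 1$ not of the form $p^j$ only correction terms contribute. The standard bound $\nu\binom{p^i}{k} \geq \nu p$ for $0 < k < p^i$, together with $\nu b \geq -\nu p/p^n$, yields
\[
\nu\bigl(a_i \binom{p^i}{k} b^{p^i - k}\bigr) \;\geq\; \nu p + (p^i - k)\nu b \;\geq\; \nu p\bigl(1 - (p^i - k)/p^n\bigr) \;>\; 0,
\]
since $p^i - k \leq p^n - 1 < p^n$. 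Consequently $\nu c_{p^j} = 0$ with $c_{p^j}\nu = a_j\nu$, and $\nu c_k > 0$ for the remaining $k \geq 1$, so the residue of $f_b$ coincides with $\mathfrak{A}\nu(X)$ up to its constant term. The bound $\nu b \geq -\nu p/p^n$ is automatic whenever $\nu(\th - b) \geq 0$: from $b = \th - (\th - b)$ we get $\nu b \geq \min(\nu\th, \nu(\th-b)) \geq \min(\nu\th, 0) \geq -\nu p/p^n$.

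Part (1) then follows the original template. Assume $\nu(\th - b) > 0$ and set $\xi := \th - b$. From $f_b(\xi) = 0$, the identity $c_0 = -\sum_{k \geq 1} c_k \xi^k$ gives $\nu c_0 \geq \nu\xi > 0$, so $f_b\nu(X) = \mathfrak{A}\nu(X)$; since in characteristic $p$ the derivative $(\mathfrak{A}\nu)'(X) = a_0\nu$ is a nonzero constant, $0$ is a simple root of $\mathfrak{A}\nu$, and Hensel's lemma produces a root of $f_b$ in $\mathcal{M}_K$, hence a root of $f$ in $K$, contradicting the hypothesis. For (2), with $\eta := \th - b$ of valuation $0$, the same identity yields $\nu c_0 \geq 0$, so $f_b \in \mathfrak{O}_K[X]$ and $f_b\nu$ is separable of degree $p^n$; Remark \ref{Remark hensels lemma} decomposes $f_b$ over $\mathfrak{O}_K$, the irreducible factor vanishing at $\eta$ has residue equal to the minimal polynomial of $\eta\nu$ over $K\nu$, and the degree chain together with Ostrowski delivers $(K(\th)|K,\nu)$ defectless and unramified with $K(\th)\nu = K\nu((\th - b)\nu)$, exactly as before. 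Part (3) reduces to a direct valuation estimate: $\nu\th > 0$ makes $a_0\th$ the term of strictly least value in $\mathfrak{A}(\th)$, while $\nu\th < 0$ combined with $\nu\th \geq -\nu p/p^n$ makes $a_n\th^{p^n}$ strictly dominate the other $a_i\th^{p^i}$; either forces $\nu\mathfrak{A}(\th) \neq 0$, contradicting $\nu c = 0$.

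The main obstacle is the valuation bookkeeping in the second step. The threshold $\nu\th \geq -\nu p/p^n$ in the hypothesis is calibrated precisely so that, for every admissible shift $b$, all binomial cross terms $a_i\binom{p^i}{k} b^{p^i - k}$ with $0 < k < p^i$ land simultaneously in $\mathcal{M}_K$; a weaker bound would let some correction survive in the residue and destroy the identification $f_b\nu = \mathfrak{A}\nu$ on which the Hensel argument depends.
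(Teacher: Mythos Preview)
Your proof is correct and follows essentially the same approach as the paper: both shift by $b$, use $\nu\binom{p^i}{k}\geq\nu p$ together with the bound $\nu b\geq -\nu p/p^n$ to force $f_b\nu(X)=\mathfrak{A}\nu(X)+\text{const}$, and then invoke Hensel's lemma and Remark~\ref{Remark hensels lemma} exactly as in the positive-characteristic Lemma~\ref{Lemma extns by p-polynomials}. One small slip: the assertion $\nu c_{p^j}=0$ is only literally true when $\nu a_j=0$ (for intermediate $j$ one merely has $\nu c_{p^j}\geq 0$), but the conclusion $c_{p^j}\nu=a_j\nu$ that you actually use holds in all cases, so nothing is affected.
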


\begin{proof}
	We have that $\sum_{i=0}^{n} a_i \th^{p^i}=c$ where $a_i \in \mathfrak{O}_K$. So, $\nu\th>0\Longrightarrow \nu c>0$. Consequently, $f\nu(X) = \mathfrak{A}\nu (X)$. The fact that $a_0\nu\neq 0$ then implies that $f\nu(X)$ admits $0$ as a simple root. The field $(K,\nu)$ being henselian then implies that $f(X)$ admits a simple root over $K$, thus yielding a contradiction. So $\nu\th\leq 0$. When $\nu\th<0$, we observe that $\nu \sum_{i=0}^{n} a_i \th^{p^i} = p^n \nu\th$. Hence, $\nu c=0\Longrightarrow\nu\th=0$. 
	
	\pars We now suppose that $\nu(\th-b)>0$ for some $b\in K$. So $\nu\th = \nu b$. Define $f_b(X) := f(X+b)= \mathfrak{A}(X+b)-c$. Thus, 
	\[ f_b(X) = \mathfrak{A}(X) + \sum_{i=0}^{n} a_i \sum_{k=1}^{p^i-1} \binom{p^i}{k} X^{p^i-k}b^k -(c-\mathfrak{A}(b)). \]
	Now, the fact that $f(\th)=0$ implies that $f_b(\th-b)=0$. So, 
	\[ \mathfrak{A}(\th-b) + \sum_{i=0}^{n} a_i \sum_{k=1}^{p^i-1} \binom{p^i}{k} (\th-b)^{p^i-k}b^k = c-\mathfrak{A}(b).   \]
	We know that $p$ divides $\binom{p^i}{k}$ for each $1\leq k \leq p^i-1$. Hence, $\nu\binom{p^i}{k}b^k \geq \nu p +k\nu\th \geq \nu p- \frac{k}{p^n}\nu p >0$. Again, $\nu(\th-b)>0 \Longrightarrow \nu\mathfrak{A}(\th-b)>0$. Consequently we obtain that $\nu(c-\mathfrak{A}(b)) >0$. Thus $f_b\nu (X) = \mathfrak{A}\nu (X)$ which admits $0$ as a simple root. It follows that $f_b(X)$ admits a simple root over $K$. The fact that $f_b(X) = f(X+b)$ then yields a contradiction. So, $\nu(\th-K)\leq 0$. 
	
	\pars Now suppose that $\nu(\th-b)=0$ for some $b\in K$. If $\nu\th<0$, then $\nu\th=\nu b <0$. The lemma now follows from the same arguments as in Lemma \ref{Lemma extns by p-polynomials}. If $\nu\th=0$, then $\nu c=0$ by the preceding arguments. Thus $f\nu(X)=\mathfrak{A}\nu(X)-c\nu \in K\nu[X]$. The lemma again follows from similar arguments as in Lemma \ref{Lemma extns by p-polynomials}. 
\end{proof}

\begin{Theorem}\label{Theorem non-unique mixed char}
	Let $(K(\th)|K,\nu)$ be a purely wild extension of henselian valued fields with $\ch K=0$ and $\ch K\nu=p>0$. Suppose that the minimal polynomial of $\th$ over $K$ is of the form $f(X):=\mathfrak{A}(X)-c$, where,
	\[ \mathfrak{A}(X):= \sum_{i=0}^{n} a_i X^{p^i}\in\mathfrak{O}_K[X]. \]
	Suppose that $\nu a_0=\nu a_n=0$, $\nu\th> - \frac{\nu p}{p^n}$ and that $K\nu$ is not $\mathfrak{A}\nu$-closed. Then $(K,\nu)$ does not admit a unique maximal purely wild extension. If $(K(\th)|K,\nu)$ is an immediate extension, then $(K,\nu)$ does not admit a unique maximal immediate algebraic extension.
\end{Theorem}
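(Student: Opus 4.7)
The plan is to mirror the proof of Theorem \ref{Theorem non-unique eq char} with one crucial new step, namely compensating for the failure of $\mathfrak{A}(X)$ to be additive when $\ch K = 0$. As in that proof, I would pick $t \in \mathfrak{O}_K$ with $\nu t = 0$ such that $\mathfrak{A}\nu(X) - t\nu$ has no root in $K\nu$, set $h(X) := \mathfrak{A}(X) - t$ and $g(X) := \mathfrak{A}(X) - (c+t)$, and fix $\a \in K^\ac$ with $g(\a) = 0$. A short case analysis of $\nu c = \nu\mathfrak{A}(\th)$ using $\nu\th > -\nu p/p^n$ yields $\nu c > -\nu p$, so $\nu(c+t) > -\nu p$, and the same estimate applied to $g(\a) = c+t$ gives $\nu\a > -\nu p/p^n$. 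Hence Lemma \ref{Lemma extns by p-polynomials mixed char} is available for both $(K(\th)|K,\nu)$ and (eventually) $(K(\a)|K,\nu)$.

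The new step replaces the identity $\mathfrak{A}(\a - \th) = t$ of the positive-characteristic proof. I would work over $K(\th)$ using the substitution $Y = X - \th$ in $g$:
\[ \tilde g(Y) := g(Y + \th) = \mathfrak{A}(Y) + E(Y) - t, \qquad E(Y) := \sum_{i=0}^{n} a_i \sum_{k=1}^{p^i-1} \binom{p^i}{k} Y^{p^i - k} \th^k, \]
using $\mathfrak{A}(\th) = c$ to cancel the constant. Each coefficient of $E(Y)$ has valuation $\geq \nu p + k\nu\th > \nu p\,(1 - k/p^n) > 0$, because $p \mid \binom{p^i}{k}$ for $1 \leq k \leq p^i - 1 < p^n$ and because of the strict inequality $\nu\th > -\nu p/p^n$. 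Thus $\tilde g$ reduces modulo $\mathcal{M}_{K(\th)}$ to $\mathfrak{A}\nu(Y) - t\nu \in K(\th)\nu[Y]$, which is separable (derivative $a_0\nu \neq 0$) and, since $K(\th)\nu | K\nu$ is purely inseparable, has no root in $K(\th)\nu$ either.

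Setting $\eta := \a - \th$, an analysis analogous to Lemma \ref{Lemma extns by p-polynomials mixed char}(1) gives $\nu\eta \leq 0$ (if $\nu\eta > 0$, reducing $\tilde g(\eta)=0$ modulo $\mathcal{M}$ would force $t\nu = 0$), while comparing $\nu\mathfrak{A}(\eta) = p^n\nu\eta$ against the lower bound $\nu E(\eta) \geq \nu p + (p^n-1)\nu\eta + \nu\th$, which under $\nu\eta, \nu\th > -\nu p/p^n$ is strictly larger than $p^n\nu\eta$ whenever $\nu\eta < 0$, excludes $\nu\eta < 0$; hence $\nu\eta = 0$. Applying the strong Hensel's Lemma (Remark \ref{Remark hensels lemma}) to $\tilde g \in \mathfrak{O}_{K(\th)}[Y]$ (noting $\deg \tilde g = \deg \tilde g\nu = p^n$ since $\nu a_n = 0$) lifts the irreducible factorisation of $\mathfrak{A}\nu - t\nu$ to one of $\tilde g$; since $\eta\nu$ is a root of this residue polynomial and lies outside $K(\th)\nu$, the argument of Lemma \ref{Lemma extns by p-polynomials mixed char}(2) transplants to show that $(K(\a, \th)|K(\th),\nu)$ is defectless unramified with $K(\a,\th)\nu = K(\th)\nu(\eta\nu)$.

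With this analogue of the key defectless unramified extension in hand, the remainder of the argument in the proof of Theorem \ref{Theorem non-unique eq char} transplants verbatim: applying Lemma \ref{Lemma extns by p-polynomials mixed char} to $(K(\a)|K,\nu)$, deducing valuation-disjointness and using Lemma \ref{Lemma valn disj tame and purely wild}, one obtains $[K(\a):K] = [K(\th):K]$; the separable-degree computation then forces $K(\a)\nu|K\nu$ purely inseparable and $\nu K(\a)/\nu K$ a $p$-group, so that $(K(\a)|K,\nu)$ is purely wild; and the argument that no $K$-conjugate of $\a$ lies in any maximal purely wild $W \supseteq K(\th)$ goes through unchanged. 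The immediate case follows from the defect-multiplicativity computation appearing just after Theorem \ref{Theorem non-unique eq char}. The principal obstacle is the second and third paragraphs above, where the strict bound $\nu\th > -\nu p/p^n$ is precisely what is needed to kill the binomial error terms so that the residue of $\tilde g$ becomes a pure $p$-polynomial over $K(\th)\nu$; without strictness, some $E(Y)$ coefficient could have residue zero and the analysis would collapse.
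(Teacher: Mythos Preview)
Your outline matches the paper's proof almost step for step: choose $t$ with $\mathfrak{A}\nu - t\nu$ rootless in $K\nu$, set $g := \mathfrak{A} - (c+t)$ and $\a$ a root of $g$, bound the binomial cross terms in $g(Y+\th)$ to see that $(K(\a,\th)|K(\th),\nu)$ is defectless unramified with residue generated by $(\a-\th)\nu$, then run the symmetric argument over $K(\a)$ (via Lemma~\ref{Lemma extns by p-polynomials mixed char} with $b=\a$) and finish by the separable-degree and defect computations of Theorem~\ref{Theorem non-unique eq char}. One small correction: your stated bound $\nu E(\eta) \geq \nu p + (p^n-1)\nu\eta + \nu\th$ does not hold for every cross term (e.g.\ $i=n$, $k=p^n-1$), but the simpler estimate that each term has value $\geq \nu p + (p^i-k)\nu\eta + k\nu\th > \nu p - p^i\nu p/p^n \geq 0$ already gives $\nu E(\eta)>0$ and hence the desired contradiction; the paper sidesteps this altogether by first observing that the purely wild hypothesis forces $\nu\th<0$ (via Lemma~\ref{Lemma extns by p-polynomials mixed char}(2)), whence $\nu c = p^n\nu\th$ and $\nu\a=\nu\th$, so all error terms can be bounded in terms of $\nu\th$ alone.
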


\begin{proof}
	Let $t\in K$ with $\nu t=0$ such that $\mathfrak{A}\nu(X)-t\nu$ does not have any roots in $K\nu$. Define the polynomials $g(X):= \mathfrak{A}(X)-(c+t)$ and $h(X):= \mathfrak{A}(X)-t$. Then $h\nu(X)$ does not admit any roots in $K\nu$. The fact that $a_0\nu\neq 0$ implies that $h\nu(X)$ is separable over $K\nu$. We have that the residue field extension $K(\th)\nu|K\nu$ is purely inseparable. It follows that $h\nu(X)$ does not admit any roots in $K(\th)\nu$.
	\pars Let $\a\in K^\ac$ such that $g(\a)=0$. It follows from Lemma \ref{Lemma extns by p-polynomials mixed char} that $\nu(\th-K) <0$. Further, $\nu c = p^n\nu\th < 0$. Thus $\nu(c+t)=\nu c < 0$ and consequently $\nu\a=\nu\th$. Now, 
	\begin{align*}
	\mathfrak{A}(\a-\th)&= \mathfrak{A}(\a)-\mathfrak{A}(\th)+ \sum_{i=0}^{n} a_i \sum_{k=1}^{p^i-1} \binom{p^i}{k} (-1)^k \a^{p^i-k}\th^k \\
	&= \sum_{i=0}^{n} a_i \sum_{k=1}^{p^i-1} \binom{p^i}{k} (-1)^k \a^{p^i-k}\th^k + t.
	\end{align*} 
	For each $1\leq k \leq p^i-1$, we have that, $\nu \binom{p^i}{k} \a^{p^i-k}\th^k \geq \nu p +p^i \nu\th > \nu p - \frac{p^i}{p^n} \nu p \geq 0$. Thus, 
	\[ \nu\mathfrak{A}(\a-\th)=0 \text{ and hence } \nu(\a-\th)=0.  \]
	Further, $\mathfrak{A}(\a-\th)\nu = \mathfrak{A}\nu((\a-\th)\nu) = t\nu$, that is, $(\a-\th)\nu$ is a root of $h\nu$.
	\pars Now suppose that $g(X)$ admits a root $\b\in K(\th)$. The preceding arguments imply that $(\b-\th)\nu$ is a root of $h\nu$, which contradicts the fact that $h\nu$ does not admit any roots in $K(\th)\nu$. So $g(X)$ does not have any roots in $K(\th)$. Lemma \ref{Lemma extns by p-polynomials mixed char} now implies that $(K(\a,\th)|K(\th),\nu)$ is a defectless and unramified extension with $K(\a,\th)\nu = K(\th)\nu((\a-\th)\nu)$. Again, observe that $h\nu$ being a separable polynomial over $K\nu$ implies that $(\a-\th)\nu$ is separable over $K\nu$. Thus $K\nu(\a-\th)\nu$ and $K(\th)\nu$ are linearly disjoint over $K\nu$. It follows that,
	\[ [K(\a,\th):K(\th)] = [K\nu((\a-\th)\nu) : K\nu]. \]
	
	\pars Now suppose that $f(X)$ admits a root over $K(\a)$. The polynomial $f(X)$ being irreducible over $K$ implies that each root of $f$ is a $K$-conjugate of $\th$. Further, the henselianity of $(K,\nu)$ implies that each conjugate has the same value. So without any loss of generality, we can assume that $\th\in K(\a)$. So $p^n=[K(\th):K]\leq[K(\a):K]\leq p^n$. It follows that $K(\a)=K(\th)$. which implies that $h\nu(X)$ has the root $(\a-\th)\nu$ in $K(\th)\nu$, thus yielding a contradiction. So $f(X)$ does not have any roots in $K(\a)$. From Lemma \ref{Lemma extns by p-polynomials mixed char} it now follows that $(K(\a,\th)|K(\a),\nu)$ is a defectless and unramified extension with $K(\a,\th)\nu = K(\a)\nu((\a-\th)\nu)$.
	\pars Observe that, 
	\begin{align*}
	[K(\a,\th):K(\a)] = [K(\a)\nu((\a-\th)\nu): K(\a)\nu] &\leq [K\nu((\a-\th)\nu):K\nu] = [K(\a,\th):K(\th)].\\
	[K(\a):K] &\leq [K(\th):K].
	\end{align*}
	It follows that both the above inequalities are equalities. Using the same arguments as presented in Theorem \ref{Theorem non-unique eq char}, it now follows that $(K(\a)|K,\nu)$ is a purely wild extension and that $(K,\nu)$ does not admit a unique maximal purely wild extension.
\end{proof}

\begin{Corollary}
	Let $(K(\th)|K,\nu)$ be a purely wild extension of henselian valued fields with $\ch K=0$ and $\ch K\nu=p>0$. Suppose that the minimal polynomial of $\th$ over $K$ is of the form $X^p-X-c$. Suppose that $\nu\th> - \frac{\nu p}{p}$ and that the residue field $K\nu$ is not Artin-Schreier closed. Then $(K,\nu)$ does not admit a unique maximal purely wild extension. If $(K(\th)|K,\nu)$ is an immediate extension, then $(K,\nu)$ does not admit a unique maximal immediate algebraic extension.  
\end{Corollary}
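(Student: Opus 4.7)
The plan is to deduce this corollary as a direct specialization of Theorem \ref{Theorem non-unique mixed char}. I would set
\[
  \mathfrak{A}(X) := X^p - X \in \mathfrak{O}_K[X],
\]
so that the given minimal polynomial $X^p - X - c$ of $\th$ over $K$ is of the form $f(X) = \mathfrak{A}(X) - c$ required by the theorem, with $n = 1$, $a_0 = -1$, $a_1 = 1$. In particular $\nu a_0 = \nu a_n = 0$ holds trivially.

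The next step is to match the numerical and residual hypotheses. The assumption $\nu\th > -\frac{\nu p}{p}$ is exactly the bound $\nu\th > -\frac{\nu p}{p^n}$ of the theorem with $n = 1$. Moreover, reducing modulo $\mathfrak{M}_K$ gives
\[
  \mathfrak{A}\nu(X) = X^p - X \in K\nu[X],
\]
which is precisely the shape of an Artin--Schreier polynomial over the residue field. Consequently, to say that $K\nu$ is not Artin--Schreier closed is to say that there exists $t \in \mathfrak{O}_K$ with $\nu t = 0$ such that $\mathfrak{A}\nu(X) - t\nu$ admits no root in $K\nu$, i.e.\ that $K\nu$ is not $\mathfrak{A}\nu$-closed in the sense of Theorem \ref{Theorem non-unique mixed char}.

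Having verified all the hypotheses of Theorem \ref{Theorem non-unique mixed char}, both conclusions of the corollary follow directly: $(K,\nu)$ does not admit a unique maximal purely wild extension, and if $(K(\th)|K,\nu)$ is additionally immediate, then the theorem also yields non-uniqueness of maximal immediate algebraic extensions of $(K,\nu)$. There is essentially no obstacle to overcome, since the statement is a clean specialization of the previous theorem to the Artin--Schreier case $\mathfrak{A}(X) = X^p - X$; the only content beyond invoking the theorem is the observation that the residue of $\mathfrak{A}$ is again an Artin--Schreier polynomial, so the two non-closedness conditions coincide.
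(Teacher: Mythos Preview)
Your proposal is correct and is exactly the intended argument: the paper states this corollary immediately after Theorem \ref{Theorem non-unique mixed char} without proof, since it is the direct specialization $\mathfrak{A}(X)=X^p-X$, $n=1$, where $\mathfrak{A}\nu$-closedness of $K\nu$ coincides with Artin--Schreier closedness.
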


\begin{Remark}\label{Remark splitting field additive polynomials}
	We make a quick remark regarding splitting fields of polynomials of the form $\mathfrak{A}(X)$ as mentioned above, irrespective of the characteristic of the base field. Let $(K,\nu)$ be a henselian valued field with $\ch K\nu=p>0$. Let $\mathfrak{A}(X):= \sum_{i=0}^{n} a_i X^{p^i} \in \mathfrak{O}_K[X]$ and suppose that $\nu a_0 = \nu a_n = 0$. Then $\mathfrak{A}(X)$ and $\mathfrak{A}\nu(X)$ are separable polynomials and $\deg \mathfrak{A} = \deg \mathfrak{A}\nu$. Let $\g\in K^\ac\setminus\{0\}$ be such that $\mathfrak{A}(\g)=0$. Then, $\sum_{i=0}^{n} a_i \g^{p^i}=0 \Longrightarrow \nu\g\geq 0$. Again we observe that $0$ is a simple root of $\mathfrak{A}$ and of $\mathfrak{A}\nu$. Hence we have that $\nu\g=0$. Now from Remark \ref{Remark hensels lemma} we have a decomposition $\mathfrak{A} = f_1 \dotsc f_r$ such that $\mathfrak{A}\nu = f_1\nu \dotsc f_r\nu$ is an irreducible decomposition of $\mathfrak{A}\nu$, and $\deg f_i = \deg f_i \nu$ for each $i$. With the same arguments as in Lemma \ref{Lemma extns by p-polynomials}, we observe that $(K(\g)|K,\nu)$ is a defectless and unramified extension. Let $E$ denote the splitting field of $\mathfrak{A}(X)$ over $K$. Then $E$ is the compositum of all such fields $K(\g)$ where $\mathfrak{A}(\g)=0$. Hence $(E|K,\nu)$ is a defectless and unramified extension.  
\end{Remark}

We now study purely wild Kummer extensions $(K(\th)|K,\nu)$. Recall that a Kummer extension $K(\th)|K$ is a degree $p$ Galois extension where the minimal polynomial of $\th$ over $K$ is of the form $X^p-a$. We observe that when the Kummer generator has value $0$, we can obtain a bound on the set of values $\nu(\th-K)$. 

\begin{Lemma}\label{Lemma value group bound Kummer extn}
	Let $(K,\nu)$ be a henselian valued field with $\ch K=0$ and $\ch K\nu=p>0$. Suppose that $K$ contains a primitive $p$-th root of unity. Let $K(\th)|K$ be a Kummer extension and suppose that $\nu \th = 0$. Then $\nu(\th-K) \leq \frac{\nu p}{p-1}$.   
\end{Lemma}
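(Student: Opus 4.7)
The plan is to argue by contradiction and reduce to Lemma \ref{lemma 1+a = b^p}. Suppose there is some $c\in K$ with $\nu(\th-c) > \frac{\nu p}{p-1}$, and set $\eta:=\th-c$. Since $\nu\th=0$ and $\nu\eta>0$, we must have $\nu c=0$.

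Next, I would expand the Kummer relation $\th^{p}=a$ in terms of $\eta$ and $c$. Writing
\[ a-c^{p} = (\eta+c)^{p}-c^{p} = \eta^{p} + \sum_{k=1}^{p-1}\binom{p}{k}\eta^{p-k}c^{k}, \]
each binomial coefficient $\binom{p}{k}$ with $1\leq k\leq p-1$ has $\nu\binom{p}{k}=\nu p$. Since $\nu c=0$, the $k$-th middle term has valuation $\nu p+(p-k)\nu\eta$, which is minimized at $k=p-1$ giving $\nu p+\nu\eta$. The term $\eta^{p}$ has valuation $p\nu\eta$. The hypothesis $\nu\eta > \frac{\nu p}{p-1}$ is equivalent to $(p-1)\nu\eta>\nu p$, i.e.\ $p\nu\eta>\nu p+\nu\eta$, so the minimum across all terms is $\nu p+\nu\eta$. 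Hence
\[ \nu(a-c^{p}) \geq \nu p+\nu\eta \;>\; \nu p + \frac{\nu p}{p-1} = \frac{p\,\nu p}{p-1}. \]

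Dividing by $c^{p}$ (recall $\nu c=0$), this gives $\nu\!\left(\frac{a}{c^{p}}-1\right)>\frac{p}{p-1}\nu p$. Since $K$ contains a primitive $p$-th root of unity, it contains all $p$-th roots of unity, so Lemma \ref{lemma 1+a = b^p} applies: there exists $b\in K\setminus\{0\}$ with $\frac{a}{c^{p}}=b^{p}$, whence $a=(cb)^{p}$. Thus $cb\in K$ is a root of $X^{p}-a$, contradicting the fact that $X^{p}-a$ is the minimal polynomial of $\th$ over $K$ and $\th\notin K$.

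The only mildly delicate step is the valuation bookkeeping that shows the sum $\eta^{p}+\sum_{k=1}^{p-1}\binom{p}{k}\eta^{p-k}c^{k}$ has valuation strictly greater than $\frac{p\,\nu p}{p-1}$; once the threshold $\nu\eta>\frac{\nu p}{p-1}$ is recognized as exactly what is needed to make the linear-in-$\eta$ term the dominant obstruction (and still large enough to exceed $\frac{p\,\nu p}{p-1}$ after adding $\nu p$), the rest of the argument is an immediate invocation of Lemma \ref{lemma 1+a = b^p}.
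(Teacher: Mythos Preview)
Your proof is correct and follows essentially the same strategy as the paper: assume a large value $\nu(\th-c)>\frac{\nu p}{p-1}$, estimate the cross terms in the $p$-th power expansion to force $\nu(a/c^{p}-1)>\frac{p}{p-1}\nu p$, and invoke Lemma~\ref{lemma 1+a = b^p} to produce a $p$-th root of $a$ in $K$. The only cosmetic difference is that the paper first normalizes to $c=1$ by replacing $\th$ with $\th/c$ and packages the binomial cross terms as $p(\th-1)g$ with $\nu g\geq 0$, whereas you keep $c$ general and track each term explicitly.
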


\begin{proof}
	Suppose that $\nu(\th - t) > \frac{\nu p}{p-1}$ for some $t \in K$. So $\nu \th = \nu t = 0$ and hence, $\nu(\th-K) = \nu(\frac{\th}{t} - K)$. Further $\frac{\th}{t}$ is also a Kummer generator of $K(\th)|K$ and $\nu(\frac{\th}{t}) = 0$. Thus we can assume that $t = 1$. So we now have $\nu(\th-1) > \frac{\nu p}{p-1}$. We consider the binomial expansion of $(\th - 1)^p$. Since $\binom{p}{i}$ is a multiple of $p$ for each $i = 1, \dotsc, p-1$, we have that,
	\[ (\th-1)^p = (\th^p - 1) + p f(\th) \]
	where $f(\th) \in \mathcal{O}_K [\th] \subseteq \mathcal{O}_{K(\th)}$. Again $\th - 1$ divides $f(\th)$ in $\mathcal{O}_K [\th]$. Hence we have, 
	\begin{equation}\label{Kummer extn binom form}
	(\th-1)^p = (\th^p - 1) + p (\th-1)g 
	\end{equation}
	where $\nu g \geq 0$. We observe that $\nu(\th-1)^p = p\nu(\th-1) > \frac{p}{p-1} \nu p$ and $\nu[p(\th-1)g] > \frac{p}{p-1} \nu p$. Thus we have $\nu(\th^p - 1) > \frac{p}{p-1} \nu p$. Let $\th^p = a \in K$. Then from Lemma \ref{lemma 1+a = b^p} we have that there exists $b \in K$ such that $b^p = a$. But this implies that $\th \in K$ which is a contradiction. Hence we have the lemma. 	
\end{proof}

We are now ready to give a complete ramification theoretic classification of a Kummer extension $(K(\th)|K, \nu)$. 

\begin{Theorem}\label{Theorem classification kummer}
	Let $(K,\nu)$ be a henselian valued field with $\ch K=0$ and $\ch K\nu=p>0$. Suppose that $K$ contains a primitive $p$-th root of unity. Let $K(\th)|K$ be a Kummer extension. Then the following cases are possible. 
	\sn (I) $\nu(\th - K)$ does not have a maximal element. Then $(K(\th)|K, \nu)$ is immediate. 
	\n (II) $\nu(\th-K)$ attains a maximal element $\g$. These are the following subcases. 
	\n \hspace{5 mm} (II.A) $\g \notin \nu K$. Then $(K(\th)|K, \nu)$ is purely wild and defectless. 
	\n \hspace{5 mm} (II.B) $\g \in \nu K$. We can choose $\th$ such that $\nu \th = 0$. We have $0 \leq \g \leq \frac{\nu p}{p-1}$.
	\n \hspace{10 mm} (II.B.1) $0 \leq \g < \frac{\nu p}{p-1}$. Then $(K(\th)|K, \nu)$ is purely wild and defectless. 
	\n \hspace{10 mm} (II.B.2.) $\g = \frac{\nu p}{p-1}$. Then $(K(\th)|K, \nu)$ is defectless and unramified and $K(\th)\nu|K\nu$ is an Artin-Schreier extension.
\end{Theorem}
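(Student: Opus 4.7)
The plan is to handle the four cases individually, using the Kummer identity $\th^p = a$, a binomial expansion, and the Lemma of Ostrowski throughout. Case (I) is immediate from Lemma 2.21 of [\ref{Kuh A-S extensions and defectless fields paper}] as quoted in the text, since $(K,\nu)$ henselian and $[K(\th):K] = p$ ensure the unique-extension hypothesis. Case (II.A) is a direct application of Ostrowski: the witness $\g \in \nu K(\th)\setminus\nu K$ forces $[\nu K(\th):\nu K] = p$, so the defect and residue degree are both trivial, and $(K(\th)|K,\nu)$ is purely wild and defectless.

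For Case (II.B), I would first show that $\g \in \nu K$ forces $\nu\th \in \nu K$: if $\nu\th \notin \nu K$, then for every $t \in K$ one has $\nu(\th - t) = \min(\nu\th, \nu t)$, so $\max\nu(\th - K) = \nu\th \notin \nu K$, contradicting $\g \in \nu K$. Hence one may replace $\th$ by $\th/d$ with $\nu d = \nu\th$ (still a Kummer generator, since $(\th/d)^p = a/d^p \in K$) and assume $\nu\th = 0$, which gives $\g \geq 0$ via $t = 0$. Lemma \ref{Lemma value group bound Kummer extn} then bounds $\g \leq \nu p/(p-1)$. Pick $t, c \in K$ with $\nu(\th - t) = \g = \nu c$ and set $\zeta := (\th - t)/c$; then $\nu\zeta = 0$, and maximality of $\g$ forces $\zeta\nu \notin K\nu$, since any lift $b \in \mathcal{O}_K$ of $\zeta\nu$ would give $\nu(\th - t - cb) > \g$. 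Writing $\th = t + c\zeta$ and expanding $a = (t + c\zeta)^p$ binomially, division by $c^p$ yields
\[ \frac{a - t^p}{c^p} = \zeta^p + \sum_{k=1}^{p-1}\binom{p}{k} t^{p-k} c^{k-p} \zeta^k, \]
in which the coefficient of $\zeta^k$ has valuation $\nu p + (p-k)(\nu t - \g)$ for $1 \leq k \leq p-1$.

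In Case (II.B.1), the bound $\g < \nu p/(p-1)$ together with $\nu t \geq 0$ makes each such valuation strictly positive; reducing modulo $\mathcal{M}_{K(\th)}$ yields $\zeta\nu^p = ((a - t^p)/c^p)\nu \in K\nu$, which together with $\zeta\nu \notin K\nu$ exhibits a purely inseparable degree $p$ subextension of $K(\th)\nu|K\nu$. Ostrowski then forces $(K(\th)|K,\nu)$ to be defectless with $K(\th)\nu|K\nu$ purely inseparable, hence defectless and purely wild. In Case (II.B.2), $\g = \nu p/(p-1)$ and Lemma \ref{lemma C^{p-1} = p} supplies $C \in K$ with $C^{p-1} = -p$ and $\nu C = \g$; after further normalizing $t = 1$ (possible because $\g > 0$ forces $\nu t = 0$) and taking $c = C$, the identity $C^{p-1} = -p$ produces the cancellation $pC\phi + C^p\phi^p = pC(\phi - \phi^p)$ at the critical valuation $p\g$, collapsing the expansion to $(a - 1)/C^p = \phi^p - \phi + R$ with $\nu R > 0$ and $\phi := \zeta$. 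Passing to residues exhibits $\phi\nu$ as a root of an Artin-Schreier polynomial over $K\nu$; the alternative $\nu((a - 1)/C^p) > 0$ is ruled out, as it would force $\phi\nu \in \FF_p \subseteq K\nu$, contradicting $\phi\nu \notin K\nu$. Thus $K(\th)\nu|K\nu$ is an Artin-Schreier extension, and Ostrowski yields defectless and unramified.

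The principal obstacle is the Case (II.B.2) computation: one must verify that at the critical valuation $p\g = \nu p + \g$ precisely the two boundary terms (of degree $1$ and $p$ in $\phi$) survive, that the intermediate coefficients $\binom{p}{k}C^k$ for $2 \leq k \leq p-1$ strictly exceed this valuation, and that the relation $C^{p-1} = -p$ converts the sum of the two boundary terms into an Artin-Schreier shape at the residue level.
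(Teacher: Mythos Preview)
Your proposal is correct and follows essentially the same route as the paper: Lemma~2.21 for (I), Ostrowski for (II.A), a binomial expansion of $(\th-t)^p$ yielding a purely inseparable residue extension in (II.B.1), and the substitution via $C$ with $C^{p-1}=-p$ to reach an Artin-Schreier reduction in (II.B.2). The only differences are cosmetic: in (II.B.1) the paper packages the cross terms in the compact form $(\th-t)^p=(a-t^p)+p(\th-t)g$ with $\nu g\geq 0$ rather than expanding fully, and in (II.B.2) it carries out the computation at the level of the minimal polynomial (substituting $X=CY+1$ to obtain $Y^p+m(Y)-Y-\frac{a-1}{C^p}$ with $m\in\mathcal{M}_K[Y]$), then uses Lemma~\ref{lemma 1+a = b^p} to pin down $\nu(a-1)=\frac{p}{p-1}\nu p$ and Hensel's lemma to rule out splitting of the residue polynomial, whereas you reach the same conclusion more directly from the fact $\phi\nu\notin K\nu$ already secured by the maximality of $\g$.
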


\begin{proof}
	Case I is a direct application of Lemma 2.21 of [\ref{Kuh A-S extensions and defectless fields paper}]. Now suppose that $\nu(\th-K)$ admits a maximal value $\g$. If $\g \notin \nu K$, then $\nu K$ is a proper subgroup of $\nu K(\th)$. Hence from the Lemma of Ostrowski we obtain that $[\nu K(\th) : \nu K] = p$, and thus $(K(\th)|K, \nu)$ is purely wild and defectless. 
	\pars Now we suppose that $\g \in \nu K$. So by Lemma \ref{lemma distance from K} we have that $\nu(\th-K) \subseteq \nu K$, and in particular $\nu \th \in \nu K$. Let $\nu \th = \nu b$ for some $b \in K$. Then $\frac{\th}{b}$ is another Kummer generator of $K(\th)|K$ with $\nu(\frac{\th}{b}) = 0$. For any $t \in K$, we have that $\nu(\frac{\th}{b} - t) = \nu(\th - bt) - \nu b \in \nu K$. Thus $\nu(\frac{\th}{b} - K) \subseteq \nu K$ and it attains the maximal value $\g-\nu b \in \nu K$. So we can assume that $\nu \th = 0$. From Lemma \ref{Lemma value group bound Kummer extn} we then have that $0 \leq \g \leq \frac{\nu p}{p-1}$. For the rest of the proof, we will work with this setup. 
	\pars Now suppose that $0 \leq \g < \frac{\nu p}{p-1}$. Let $\nu(\th-t) = \g = \nu b$ for some $b, \, t \in K$. Analogous to equation (\ref{Kummer extn binom form}), we have an expression, 
	\[ (\th-t)^p = (a - t^p) + p(\th-t)g \]
	where $a = \th^p$ and $\nu g \geq 0$. Then, 
	\[ \frac{(\th-t)^p}{b^p} - \frac{(a-t^p)}{b^p} = \frac{p(\th-t)g}{b^p}. \]
	Now $\nu(\frac{p(\th-t)g}{b^p}) \geq \nu p + \g - p \g > 0$ and hence $(\frac{\th-t}{b} \nu)^p = \frac{a-t^p}{b^p} \nu \in K \nu$. If $\frac{\th-t}{b} \nu \in K \nu$, say, $\frac{\th-t}{b} \nu = c\nu$ for some $c \in K$, then it implies that $\nu(\th-t-bc) > \nu b = \g$ which contradicts the maximality of $\g$. Hence $(\frac{\th-t}{b}) \nu \notin K \nu$. Since $\ch K \nu = p$, we have that $K\nu (\frac{\th-t}{b} \nu) |K \nu$ is purely inseparable of degree $p$. Thus $(K(\th)|K, \nu)$ is a purely wild extension with $\nu K(\th) = \nu K$ and $K(\th) \nu = K \nu (\frac{\th-t}{b} \nu)$.
	\pars We now consider the final case, when $\g = \frac{\nu p}{p-1}$. Let $\g = \nu(\th-t)$ for some $t \in K$. We first observe that we can assume that $t = 1$. Indeed $\nu \th = 0$ and $\nu(\th-t) = \g>0$ implies that $\nu \th = \nu t = 0$. So $\nu(\frac{\th}{t} - K) = \nu(\th-K)$. So replacing $\th$ by $\frac{\th}{t}$, we can assume that $t = 1$. Hence we now have that $\nu(\th-1) = \g$. Now from Lemma \ref{lemma C^{p-1} = p}, there exists $C \in K$ such that $C^{p-1} = -p$. Let $\th_C := \frac{\th-1}{C}$. Thus $\th_C \in K(\th)$ and $\nu \th_C = 0$. We will now use a transformation mentioned in [\ref{Kuh gdr}]. The minimal polynomial of $\th$ over $K$ is $f(X) = X^p - a$. Substituting $X = CY+1$ and dividing by $C^p$, we arrive at the polynomial
	\[ g(Y) = Y^p + m(Y) - Y - \frac{a-1}{C^p} \]
	where $m(Y) \in \mathcal{M}_K [Y]$. Observe further that $f(x) = 0$ if and only if $g(\frac{x-1}{C}) = 0$. In particular, $g(\th_C) = 0$. Now $K(\th) = K(\th_C)$ and hence $g(Y)$ is the minimal polynomial of $\th_C$ over $K$. Again, we have the expression, 
	\[ (\th-1)^p = (a-1) + p(\th-1)g \]
	where $\nu g \geq 0$. Thus $\nu(a-1) \geq \frac{p}{p-1} \nu p$. If $\nu(a-1) > \frac{p}{p-1} \nu p$, then Lemma \ref{lemma 1+a = b^p} would imply that $\th \in K$. So $\nu(a-1) = \frac{p}{p-1} \nu p$ and hence $0 \neq \frac{a-1}{C^p} \nu \in K \nu$.
	\pars Consider the residue polynomial $g \nu (Y) = Y^p-Y- \frac{a-1}{C^p} \nu \in K \nu [Y]$. Thus $g \nu (Y)$ is an Artin-Schreier polynomial, hence it either splits completely over $K \nu$, or is irreducible. If $g \nu$ splits over $K \nu$, then by Hensel's Lemma, $g(Y)$ splits completely over $K$ which is a contradiction to the irreducibility of $g$ over $K$. So $g \nu$ is irreducible over $K \nu$, and is thus the minimal polynomial of $\th_C \nu$ over $K \nu$. So $K(\th)\nu = K \nu (\th_C \nu)$ is an irreducible Artin-Schreier extension over $K \nu$. Hence $\nu K(\th) = \nu K$ and $(K(\th)|K, \nu)$ is a defectless unramified extension. 
\end{proof}

If $(K,\nu)$ is a henselian valued field containing a primitive $p$-th root of unity such that the residue field $K\nu$ is not Artin-Schreier closed and $(K(\th)|K,\nu)$ is a purely wild Kummer extension, we can illustrate the non-uniqueness of maximal purely wild extension of $(K,\nu)$ by a constructive argument similar to the one in Theorem \ref{Theorem non-unique eq char}. Let $\th^p=a\in K$ and let $X^p-X-t\nu$ be an irreducible Artin-Schreier polynomial over $K\nu$. From Lemma \ref{lemma C^{p-1} = p} we have that there is an element $C \in K$ such that $C^{p-1} = -p$. Define $\a\in K^\ac$ such that $\a^p = a(1+C^p t)$. Then from the ultrametric inequality we have that, 
\[ \nu\frac{\a^p}{\th^p} = \nu(1+C^pt) = 0 \Longrightarrow \nu\a=\nu\th.   \]
If $\nu\th\notin \nu K$, then $\nu\a\notin\nu K$ and hence $(K(\a)|K,\nu)$ is also a purely wild defectless extension. Now suppose that $\nu\a=\nu\th=0$. From Lemma \ref{Lemma value group bound Kummer extn} we have that $\nu(\th - K) < \frac{\nu p}{p-1}$. Suppose that $\nu(\a - \th) < \frac{\nu p}{p-1} = \nu C$. Now we have an expression, 
\[ (\a - \th)^p = (\a^p - \th^p) + p(\a - \th) g = a t C^p + p(\a-\th)g \]
where $\nu g \geq 0$. We have that $\nu(at C^p) = p \nu C > \nu(\a-\th)^p$. Hence, 
\[ \nu(\a-\th)^p = \nu p + \nu(\a - \th) + \nu g \geq \nu p +\nu (\a -\th). \]
But this implies that $\nu(\a -\th) \geq \frac{\nu p}{p-1}$ which contradicts our assumption. So, 
\[ \nu(\a -\th) \geq \frac{\nu p}{p-1} > \nu(\th -K). \]
We thus obtain that for any $d \in K$, we have that $\nu(\th - d) = \nu(\a -d)$. In particular, we have that $\nu(\th -K) = \nu(\a -K)$ and hence $(K(\a)|K, \nu)$ is a purely wild extension. The fact that they both are degree $p$ extensions over $K$ implies that either $K(\th)=K(\a)$ or that they are linearly disjoint over $K$. Let us define $f(X):= X^p - (1+C^pt) \in K[X]$. Considering the transformation $X = CY+1$ and dividing by $C^p$, we obtain the polynomial $g(Y) = Y^p + m(Y) - Y - t \in K[Y]$ where $m(Y) \in \mathcal{M}_K[Y]$. The residue polynomial is $g \nu (Y) = Y^p-Y-t\nu \in K\nu [Y]$. We also observe that $f(x) = 0$ if and only if $g(\frac{x-1}{C}) = 0$. Thus, 
\[ f(\frac{\a}{\th}) = 0\Longrightarrow g(\eta) =0 \text{ where } \eta = \frac{\frac{\a}{\th}-1}{C} \in K(\th, \a). \]
\pars Note that $\nu\frac{\a}{\th} = 0$. We then have the expression, 
\[ (\frac{\a}{\th} -1)^p = C^p t + p(\frac{\a}{\th}-1)h \]
where $\nu h \geq 0 $. If $\nu(\frac{\a}{\th}-1) < \frac{\nu p}{p-1} = \nu C$, then $p \nu (\frac{\a}{\th}-1) = \nu [p (\frac{\a}{\th}-1)h] \geq \nu p + \nu(\frac{\a}{\th}-1)$ which again leads to a contradiction. So $\nu(\frac{\a}{\th}-1) \geq \nu C$, that is, $\nu \eta \geq 0$. Hence $\eta \nu \in K(\th,\a) \nu$. Then the fact that $g(\eta) = 0$ implies that $g \nu (\eta \nu) = 0$. So we have that $\eta \nu \neq 0$ and thus $\nu (\frac{\a}{\th}-1) = \nu C = \frac{\nu p}{p-1}$. 
\newline Now if $K(\th) = K(\a)$, then $\eta \in K(\a)$. Hence, $\eta \nu \in K(\a) \nu$ is a root of $g \nu$, that is, $g\nu$ splits completely over $K(\a)\nu$. The fact that $K(\a)\nu|K\nu$ is purely inseparable then implies that $g\nu$ splits completely over $K\nu$. But this contradicts the irreducibility of $g \nu$ over $K\nu$. So, $K(\th) \neq K(\a)$, and thus, $K(\th)$ and $K(\a)$ are linearly disjoint over $K$.  
\pars We now consider the Kummer extension $(K(\a, \th)|K(\a), \nu)$ with the Kummer generator $\frac{\a}{\th}$ with $\nu \frac{\a}{\th} =0$. We have observed that $\nu (\frac{\a}{\th}-1) = \frac{\nu p}{p-1}$. From Theorem \ref{Theorem classification kummer} we then obtain that $(K(\a, \th)|K(\a), \nu)$ is a defectless and unramified extension. Similarly the extension $(K(\a, \th)|K(\th), \nu)$ is also a defectless and unramified extension. The same arguments as in Theorem \ref{Theorem non-unique eq char} now yields that $(K,\nu)$ does not admit a unique maximal purely wild extension.

\begin{Corollary}
	Let $(K,\nu)$ be a henselian valued field with $\ch K = 0$ and $\ch K\nu = p>0$. Suppose that $K$ contains a primitive $p$-th root of unity. Let $\th\in K^\ac$ such that the minimal polynomial of $\th$ over $K$ is given by $X^{p^n}-a \in K[X]$ and suppose that $\nu\th=0$. Then the following statements hold true:
	\sn (I) $\nu(\th-K) \leq \frac{\nu p}{p-1}$.
	\n (II) $\frac{\nu p}{p-1} \in \nu (\th-K)$ if and only if $(K(\th)|K,\nu)$ is a defectless and unramified Kummer extension. 
	\n (III) Suppose that $(K(\th)|K,\nu)$ is purely wild. Then $(K,\nu)$ does not admit a unique maximal purely wild extension if the residue field $K\nu$ is not Artin-Schreier closed. 
\end{Corollary}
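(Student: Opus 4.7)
The plan is to establish (I) and (II) by iterating the binomial-expansion trick from Lemma \ref{Lemma value group bound Kummer extn} along the tower of successive $p$-th roots of $\th$, and then to obtain (III) by applying the Kummer construction carried out in the paragraph just before this corollary to the intermediate purely wild Kummer extension $K(\th^{p^{n-1}})|K$.

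For (I), assume $\nu(\th - t) > \frac{\nu p}{p-1}$ for some $t \in K$. Since $\nu p > 0$ and $\nu \th = 0$, the inequality forces $\nu t = 0$; replacing $\th$ by $\th/t$, whose minimal polynomial over $K$ is $X^{p^n} - a/t^{p^n}$, we may assume $t = 1$. The standard identity
\[ (\th - 1)^p = (\th^p - 1) + p(\th - 1)g, \qquad g \in \mathcal{O}_K[\th], \]
gives $\nu(\th^p - 1) > \frac{p}{p-1}\nu p > \frac{\nu p}{p-1}$. A short induction preserving this bound (successively replacing $\th$ by $\th^p, \th^{p^2}, \dots$) produces $\nu(a - 1) = \nu(\th^{p^n} - 1) > \frac{p}{p-1}\nu p$, so Lemma \ref{lemma 1+a = b^p} supplies $b \in K$ with $a = b^p$. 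But then $X^{p^n} - a = \prod_{i=0}^{p-1}(X^{p^{n-1}} - \zeta^i b)$, where $\zeta \in K$ is a primitive $p$-th root of unity, contradicting the irreducibility of the minimal polynomial of $\th$.

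For the nontrivial direction of (II), suppose $\frac{\nu p}{p-1} = \nu(\th - t)$ for some $t \in K$. The same reduction gives $t = 1$, and the binomial identity now yields $\nu(\th^p - 1) \geq \frac{p}{p-1}\nu p > \frac{\nu p}{p-1}$. If $n \geq 2$, then $\th^p$ has $\nu\th^p = 0$ and minimal polynomial $X^{p^{n-1}} - a$ over $K$, so (I) applied to $\th^p$ forces $\nu(\th^p - K) \leq \frac{\nu p}{p-1}$, a contradiction. Thus $n = 1$, and Theorem \ref{Theorem classification kummer}, case (II.B.2), identifies $(K(\th)|K,\nu)$ as a defectless unramified Kummer extension; the converse is contained in that same case.

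For (III), the subextension $K(\th^{p^{n-1}})|K$ is a Kummer extension whose generator satisfies $X^p - a$, and being a subextension of the purely wild $(K(\th)|K,\nu)$ it is itself purely wild. Apply the Kummer construction preceding this corollary to it: pick $C \in K$ with $C^{p-1} = -p$ via Lemma \ref{lemma C^{p-1} = p}, choose $t \in K$ with $\nu t = 0$ for which $X^p - X - t\nu$ is an irreducible Artin-Schreier polynomial over $K\nu$, and let $\alpha \in K^\ac$ satisfy $\alpha^p = a(1 + C^p t)$. The quoted discussion shows that $(K(\alpha)|K,\nu)$ is a purely wild Kummer extension linearly disjoint from $K(\th^{p^{n-1}})$ over $K$, and that $(K(\alpha, \th^{p^{n-1}})|K(\th^{p^{n-1}}),\nu)$ is defectless and unramified of degree $p$. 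Fix a maximal purely wild extension $W$ of $K$ containing $K(\th)$; then $W \supseteq K(\th^{p^{n-1}})$ and $(W|K(\th^{p^{n-1}}),\nu)$ is purely wild, so Lemma \ref{Lemma valn disj tame and purely wild} gives $[W(\alpha):W] = p$ and consequently $\alpha \notin W$. Since $\zeta \in K$, every $K$-conjugate of $\alpha$ lies in $K(\alpha)$ and is therefore outside $W$ as well. If $(K,\nu)$ admitted a unique maximal purely wild extension up to $K$-isomorphism, embedding the purely wild field $K(\alpha)$ into some maximal purely wild extension and extending the resulting $K$-isomorphism to an automorphism $\omega \in \Aut(K^\ac|K)$ would produce $\omega\alpha \in W$, contradicting the preceding sentence. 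The main obstacle is keeping the strict inequalities alive during the iterated binomial expansion so that Lemma \ref{lemma 1+a = b^p} can be invoked at the top of the tower in (I); once that is secured, (II) reduces to (I) by one binomial step, and (III) is a transcription of the Kummer non-uniqueness argument to the subextension $K(\th^{p^{n-1}})|K$.
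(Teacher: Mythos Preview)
Your proof is correct and follows the same overall strategy as the paper: work along the tower of successive $p$-th powers of $\th$, control valuations via the binomial identity, and for (III) reduce to the purely wild Kummer subextension $K(\th^{p^{n-1}})|K$ treated in the paragraph preceding the corollary. The packaging differs slightly in two places. For (I) the paper takes the shortcut of factoring $\th^{p^{n-1}}-1=(\th-1)(1+\th+\cdots)$ to obtain $\nu(\th^{p^{n-1}}-1)\geq\nu(\th-1)>\frac{\nu p}{p-1}$ and then invokes Lemma~\ref{Lemma value group bound Kummer extn} for the Kummer extension $K(\th^{p^{n-1}})|K$; you instead iterate the binomial step all the way up to $a$ and finish with Lemma~\ref{lemma 1+a = b^p}, which is the same computation unwound. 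For (II) your argument is in fact tidier than the paper's: one binomial step gives $\nu(\th^p-1)>\frac{\nu p}{p-1}$, and then (I) applied to $\th^p$ (whose minimal polynomial is $X^{p^{n-1}}-a$ when $n\geq 2$) immediately forces $n=1$; the paper reaches the same conclusion through a longer detour involving $\th_{n-1}$, $\th_{n-2}$, and an appeal to the proof of Theorem~\ref{Theorem classification kummer}.
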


\begin{proof}
	Define $\th_{n-1} := \th^{p^{n-1}}$. Then $\th_{n-1}^p= a\in K$ and hence $[K(\th_{n-1}):K] \leq p$. The fact that $\th_{n-1} \in K(\th)$ implies that $[K(\th_{n-1}):K]$ divides $[K(\th):K]=p^n$. Hence, $[K(\th_{n-1}):K] = 1$ or $p$. If $\th_{n-1} = \th^{p^{n-1}} \in K$, then $[K(\th):K] \leq p^{n-1}$ which yields a contradiction. So $[K(\th_{n-1}):K] = p$ and hence $(K(\th_{n-1})|K,\nu)$ is a Kummer extension. Further, the fact that $\nu\th=0$ implies that $\nu\th_{n-1} = 0$. Hence from Lemma \ref{Lemma value group bound Kummer extn} we have that $\nu(\th_{n-1}-K) \leq \frac{\nu p}{p-1}$. 
	\newline Now suppose that $\nu(\th-t) > \frac{\nu p}{p-1}$ for some $t\in K$. Then $\nu\th = \nu t = 0$. The minimal polynomial of $\frac{\th}{t}$ over $K$ is given by $X^{p^n} - \frac{a}{t^{p^n}} \in K[X]$. Hence we can assume that $\nu (\th-1) > \frac{\nu p}{p-1}$. From the additive property of valuation we observe that $\nu(\th_{n-1} - 1) \geq \nu(\th-1)> \frac{\nu p}{p-1}$ which yields a contradiction. Hence, $\nu(\th-K) \leq \frac{\nu p}{p-1}$.   
	\pars If $(K(\th)|K, \nu)$ is a defectless and unramified Kummer extension, then $\frac{\nu p}{p-1} \in \nu(\th-K)$ [Theorem \ref{Theorem classification kummer}]. Conversely, suppose that $\frac{\nu p}{p-1} \in\nu(\th-K)$. Without any loss of generality we can again assume that $\nu(\th-1) = \frac{\nu p}{p-1}$. From the preceding discussions it follows that $\nu(\th_{n-1} -1) = \frac{\nu p}{p-1}$ and hence $(K(\th_{n-1})|K,\nu)$ is a defectless and unramified Kummer extension. We have that $\th_{n-1}^p = a$. From the proof of Theorem \ref{Theorem classification kummer} it follows that $\nu(a-1) = \frac{p}{p-1}\nu p$. Now consider the extension $K(\th)|K(\th_{n-1})$ with the minimal polynomial $X^{p^{n-1}}-\th_{n-1}$. If the extension is non-trivial, we can construct a Kummer extension $K(\th_{n-2})|K(\th_{n-1})$ where $\th_{n-2} : = \th^{p^{n-2}}$. Using the same arguments as above, we have that $\nu(\th_{n-2}-1) = \frac{\nu p}{p-1}$. Further, the fact that $\th_{n-2}^p = \th_{n-1}$ implies that $\nu(\th_{n-1}-1) = \frac{p}{p-1}\nu p$ which leads to a contradiction. Hence $K(\th_{n-1}) = K(\th)$ and thus we obtain (II).
	\pars Suppose that $(K(\th)|K,\nu)$ is a purely wild extension. Then $(K(\th_{n-1})|K,\nu)$ is a purely wild Kummer extension and the result now follows.
\end{proof}


\section{Group theoretic conditions for non-uniqueness} \label{Section group theoretic non-unique}

Let us consider the situation of Theorem \ref{Theorem non-unique eq char}. Let $f(X):= \mathfrak{A}(X)-c$ be the minimal polynomial of $\th$ and let $\mathfrak{A}\nu(X)-t\nu$ does not have any roots in $K\nu$. Then $h(X):= \mathfrak{A}(X)-t$ does not have any roots in $K$. Let $E$ denote the splitting field of $\mathfrak{A}(X)$ over $K$. We have observed in Remark \ref{Remark splitting field additive polynomials} that $(E|K,\nu)$ is a Galois defectless unramified extension. The additivity of $\mathfrak{A}(X)$ implies that all the conjugates of $\th$ are of the form $\th+\g$, where $\mathfrak{A}(\g)=0$. Setting $L:= K(\th)$, we observe that $(L|K,\nu)$ is a purely wild extension such that $L.E=E(\th)$ is the splitting field of $f(X)$ over $K$, and hence is a Galois extension over $K$. Further, let $\b$ be a root of $h(X)$ over $K$. It follows from Lemma \ref{Lemma extns by p-polynomials} that $F:= K(\b)$ is a defectless and unramified extension over $(K,\nu)$, and $F.E|K$ is a Galois extension. In this section, we show that under certain additional group theoretic assumptions, we can again achieve non-uniqueness irrespective of the characteristic of $K$.

The following lemmas will be required in the proof of Theorem \ref{Theorem non-unique}.

\begin{Lemma}\label{Lemma surjective homo subgroup}
	Let $\phi:G\longrightarrow \G$ be a homomorphism of groups with $K:=\ker\phi$. Suppose that there exists a subgroup $H \leq G$ such that $\phi(H)=\phi(G)$. Then $G=K.H$.
\end{Lemma}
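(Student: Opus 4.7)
The plan is to establish $G = K.H$ by a direct pointwise factorization argument. First, I observe that $K=\ker\phi$ is automatically normal in $G$, so by the convention recalled in Section~\ref{Section Intro}, the compositum $K.H$ coincides with the set $\{kh \mid k\in K,\ h\in H\}$. Thus the lemma reduces to showing that every $g \in G$ can be written in this form, since the reverse inclusion $K.H \subseteq G$ is immediate from $K, H \leq G$.

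Given an arbitrary $g \in G$, I would exploit the surjectivity hypothesis $\phi(H) = \phi(G)$ to pick an element $h \in H$ with $\phi(h) = \phi(g)$, and then set $k := gh^{-1}$. The homomorphism identity $\phi(k) = \phi(g)\phi(h)^{-1} = 1_\G$ places $k$ in $K$, and then $g = kh \in K.H$ delivers the required decomposition. This is essentially the standard observation that a subgroup surjecting onto a quotient covers every coset of the kernel, so I do not anticipate any genuine obstacle; the only mild point worth flagging is the normality of $K$, which legitimizes the set-level description of $K.H$ used above.
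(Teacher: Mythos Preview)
Your proposal is correct and matches the paper's proof essentially verbatim: pick $h\in H$ with $\phi(h)=\phi(g)$, set $k:=gh^{-1}\in K$, and conclude $g=kh$. The only addition you make is the explicit remark that normality of $K=\ker\phi$ ensures $K.H=\{kh\mid k\in K,\,h\in H\}$, which is a helpful clarification but not a departure from the paper's argument.
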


\begin{proof}
	The assertion implies that for each $g\in G$, there exists $h\in H$ such that $\phi(g)=\phi(h)$. Hence $k:=gh^{-1}\in K$, that is, $g=kh$. Hence we have the lemma.  
\end{proof}

\begin{Lemma}\label{Lemma semidirect product}
	Let $G=H\rtimes N$ be a semidirect product of groups where $N\trianglelefteq G$. Let $T$ be a subgroup of $G$ such that $N\subseteq T$. Then $T= (H\sect T)\rtimes N$. 
\end{Lemma}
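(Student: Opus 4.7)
The plan is to exploit the uniqueness of the semidirect-product decomposition. Since $G = H \rtimes N$ with $N \trianglelefteq G$, every element of $G$ admits a unique expression of the form $hn$ with $h \in H$ and $n \in N$, and moreover $H \cap N = \{e\}$.

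First I would pick an arbitrary $t \in T$ and write $t = hn$ for some $h \in H$ and $n \in N$. The hypothesis $N \subseteq T$ gives $n^{-1} \in T$, whence $h = tn^{-1} \in T$, so $h \in H \cap T$. This yields the inclusion $T \subseteq (H \cap T).N$; the reverse inclusion is immediate since both $H \cap T$ and $N$ sit inside $T$. Hence $T = (H \cap T).N$ as a set-theoretic product.

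To promote this factorization to an internal semidirect product, I would verify the two standard criteria. Normality of $N$ in $T$ is inherited from normality of $N$ in the ambient group $G$, since $T \leq G$. Triviality of the intersection follows from $(H \cap T) \cap N \subseteq H \cap N = \{e\}$. Together with the product decomposition just established, these give precisely $T = (H \cap T) \rtimes N$.

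There is no genuine obstacle here: the argument is essentially bookkeeping with the definition of an internal semidirect product, and it mirrors the preceding Lemma \ref{Lemma surjective homo subgroup} (pick $t \in T$, decompose, and show that the $H$-component is forced to lie in $T$ once the $N$-component does).
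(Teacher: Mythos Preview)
Your argument is correct and mirrors the paper's proof: decompose $t = hn$ and use $N \subseteq T$ to force $h \in H \cap T$. The paper is simply more terse, leaving the verification of normality and trivial intersection implicit, whereas you spell these out.
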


\begin{proof}
	Let $t\in T$. We have an expression $t=hn$ where $h\in H$ and $n\in N$. Then the fact that $N\subseteq T$ implies that $h\in H\sect T$. The lemma now follows.
\end{proof}

\begin{Theorem}\label{Theorem non-unique}
	Let $(K,\nu)$ be a henselian valued field with $\ch K\nu=p>0$. Let $(L|K,\nu)$ be a separable nontrivial purely wild extension and $(F|K,\nu)$ be a nontrivial tame extension. Suppose that $(E|K,\nu)$ is an abelian Galois tame extension such that $L.E|K$ and $F.E|K$ are Galois extensions. Further suppose that the following conditions are satisfied:
	\sn (1) $\th: \Gal(F.E|K) \longrightarrow \Gal(L.E|K)$ is a continuous isomorphism,
	\n (2) $\th_{\res} : \Gal(F.E|E) \longrightarrow \Gal(L.E|E)$ is a continuous isomorphism. 
\newline Let $(L.E|K)^\ab$ denote the maximal abelian Galois subextension of $L.E|K$. Then the following cases are possible:
\sn (a) $E\subsetneq (L.E|K)^\ab$. Then $(K,\nu)$ does not admit a unique maximal purely wild extension. 
\n (b) $E=(L.E|K)^\ab$. Then $(E,\nu)$ does not admit a unique maximal purely wild extension. 
\end{Theorem}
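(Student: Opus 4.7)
The plan is to reduce each case to Proposition \ref{Proposition Galois purely wild} by exhibiting a nontrivial finite Galois purely wild subextension and verifying that the residue field of the corresponding base is not Artin-Schreier closed. First, set up the structure: let $\tilde{L}$ denote the Galois closure of $L|K$ inside $L.E$ (well defined since $L.E|K$ is Galois). Purely wildness is preserved under $K$-conjugation (the henselian valuation on $K^\sep$ is unique) and under composita, so $\tilde{L}|K$ is Galois purely wild; by Lemma \ref{Lemma valn disj tame and purely wild}, $\tilde{L}$ and $E$ are linearly disjoint over $K$, and hence
\[ \Gal(L.E|K) = \Gal(\tilde{L}.E|K) \iso \Gal(\tilde{L}|K) \times \Gal(E|K) =: P \times T, \]
with $P$ pro-$p$ (since $\Gal(\tilde{L}|K)$ is a quotient of $\Gal(K^\sep|K^r)$). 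As $T$ is abelian, $[P\times T, P\times T] = [P,P] \times \{1\}$, whose fixed field is $L^\ab_K \cdot E$ where $L^\ab_K := \tilde{L}^{[P,P]}$ is the maximal abelian Galois subextension of $\tilde{L}|K$. Hence $E \subsetneq (L.E|K)^\ab$ iff $L^\ab_K \neq K$ iff $P^\ab \neq \{1\}$; transporting $[G,G]$ via $\theta$ also identifies $M' := (F.E|K)^\ab$ as a tame abelian Galois subextension of $K$ with $\Gal(M'|K) \iso P^\ab \times T$.

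For Case (a), $P^\ab$ is nontrivial, so $L^\ab_K|K$ is a nontrivial abelian Galois purely wild extension, and any $\theta_0 \in L^\ab_K \setminus K$ generates a finite Galois purely wild subextension $K(\theta_0)|K$. To invoke Proposition \ref{Proposition Galois purely wild} I must show $K\nu$ is not Artin-Schreier closed. The key is $M'$: it is a tame abelian Galois extension of $K$, so its inertia subgroup (prime-to-$p$) intersects the pro-$p$ subgroup $P^\ab \leq \Gal(M'|K)$ trivially, so $P^\ab$ embeds into the abelian Galois group $\Gal(M'\nu|K\nu)$. Since $|P^\ab|$ is divisible by $p$, so is $|\Gal(M'\nu|K\nu)|$, and any abelian group of order divisible by $p$ has a quotient of order $p$; hence $K\nu$ admits a degree-$p$ Galois extension inside $M'\nu$, which in residue characteristic $p$ is Artin-Schreier. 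Thus $K\nu$ is not Artin-Schreier closed.

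For Case (b), $L^\ab_K = K$ gives $[G,G] = \Gal(L.E|E)$; via the iso $\theta$ together with the hypothesis that $\theta_\res$ is an iso, also $[G',G'] = \Gal(F.E|E)$, and this group is isomorphic to the pro-$p$ group $P$. Consequently $F.E|E$ is tame with pro-$p$ Galois group, so the prime-to-$p$ inertia is trivial and $F.E|E$ is unramified; then $(F.E)\nu|E\nu$ is Galois with Galois group isomorphic to the nontrivial pro-$p$ group $P$. Any nontrivial pro-$p$ group has a quotient of order $p$ (via a nontrivial finite $p$-group quotient and its Frattini abelianization), so $E\nu$ admits a degree-$p$ Galois (hence Artin-Schreier) extension in $(F.E)\nu$ and is not Artin-Schreier closed. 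The nontrivial purely wild Galois extension $L.E|E$ contains a finite Galois purely wild subextension (the fixed field of any proper open normal subgroup of $\Gal(L.E|E)$, simple by the primitive element theorem), so Proposition \ref{Proposition Galois purely wild} settles Case (b).

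The main technical hurdles are the direct-product decomposition $\Gal(L.E|K) \iso P \times T$ via the Galois closure of $L$ and the pro-$p$ character of $P$; these structural facts power the residue-field arguments in both cases. A further subtlety in Case (a) is that $F.E|K$ need not itself be unramified, so one cannot directly read an Artin-Schreier extension of $K\nu$ from $(F.E)\nu|K\nu$; the passage to the abelian subextension $M'|K$, whose tame pro-$p$ part is forced to be unramified, is what supplies the needed degree-$p$ Galois extension of $K\nu$.
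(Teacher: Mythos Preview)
Your Case (a) argument has a genuine gap: the assertion that ``purely wildness is preserved under composita'' is false, and hence the Galois closure $\tilde L$ of $L$ over $K$ need not be purely wild and the direct-product decomposition $\Gal(L.E|K)\cong P\times T$ need not hold. Group-theoretically, pure wildness of $L|K$ says $H\,G_r=G$; there is no reason this should pass to $\bigl(\bigcap_g gHg^{-1}\bigr)G_r=G$. Concretely, take $p=2$, $\overline G:=\Gal(L.E|K)=D_4=\langle r,s\mid r^4=s^2=1,\ srs=r^{-1}\rangle$, $\overline N:=\Gal(L.E|E)=\langle r\rangle$, $\overline H:=\Gal(L.E|L)=\langle s\rangle$. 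Then $[\overline G,\overline G]=\langle r^2\rangle\subsetneq\overline N$, so we are in Case (a); but $\mathrm{core}_{\overline G}(\overline H)=\{1\}$, hence $\tilde L=L.E\supseteq E$ is not purely wild, and your $L_K^{\ab}=\tilde L^{[P,P]}$ is the fixed field of $\langle r^2\rangle$, which again contains $E$. (The same error makes your ``$P$ is pro-$p$'' claim fail: with $p=3$ and $\overline G=S_3$, $\overline N=A_3$, $\overline H=\langle(12)\rangle$ one gets $\tilde L=L.E$ and $P=S_3$, not a $3$-group; in your framework Case (b) would then force $P^{\ab}=1$, hence $P=1$ by pro-$p$ Frattini, contradicting $L\neq K$.)

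Your strategy is salvageable: instead of $\tilde L^{[P,P]}$, use $L\cap(L.E|K)^{\ab}$. This lies in $L$, hence is purely wild; it corresponds to the normal subgroup $\overline H\cdot[\overline G,\overline G]^{\mathrm{cl}}$, hence is Galois over $K$; and in Case (a) it is nontrivial because the image of $\overline H$ in $\overline G^{\ab}$ has index $|\overline N/[\overline G,\overline G]^{\mathrm{cl}}|>1$. Your residue-field argument in Case (a) likewise survives once rewritten without $P$: via $\theta$ and $\theta_{\mathrm{res}}$, $\Gal(M'|E)$ is a nontrivial pro-$p$ subgroup of the abelian group $\Gal(M'|K)$, and since the inertia subgroup of the tame abelian extension $M'|K$ has order prime to $p$, this pro-$p$ part embeds into $\Gal((M'\cap K^i)\nu|K\nu)$, giving an Artin-Schreier extension of $K\nu$. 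Your Case (b) argument is essentially correct once one replaces ``$P$'' by ``$\Gal(L.E|E)$'' throughout. This repaired route is quite different from the paper's: rather than reducing to Proposition~\ref{Proposition Galois purely wild}, the paper works directly with $G=\Gal(K^\sep|K)$, builds two homomorphisms $\chi,\phi:G\to\overline G^{\ab}$ (one via $L.E$, the other via $F.E$ composed with $\theta$), sets $\chi'=\chi\cdot\phi$, and then invokes the complement theorem of Kuhlmann--Pank--Roquette to produce two non-conjugate complements of $G_r$ in $G$.
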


\begin{proof}
	Set $G:= \Gal(K^\sep|K)$, $H:= \Gal(K^\sep|L)$, $M:= \Gal(K^\sep|F)$ and $N:= \Gal(K^\sep|E)$. Then $H$ and $M$ are closed subgroups of $G$ and $N$ is a closed normal subgroup of $G$ such that $G/N$ is abelian. From Galois correspondence we have that $G_r\subseteq N$ and $G_r\subseteq M$. The fact that $(L|K,\nu)$ is purely wild implies that $L\sect K^r = K$. Hence $G_r.H=G$ and thus $N.H=G$. Further, $H\sect N$ and $M\sect N$ are closed normal subgroups of $G$. Set $\G := \frac{G}{H\sect N} = \Gal(L.E|K)$. Let $\D\subseteq \G$ be the closure of the commutator subgroup $[\G,\G]$. Then $\D$ is a closed normal subgroup of $\G$ [Proposition 1.5, Chapter 6, \ref{Karpilovsky}]. Also,
	\[ \Gal(L.E| (L.E|K)^\ab) = \D, \, \, \G^{\ab} := \Gal((L.E|K)^\ab|K) = \G/\D. \]
	The hypotheses of the theorem translate to the following group theoretic relations:
	\begin{align*}
		\th : \frac{G}{M\sect N} &\iso \frac{G}{H\sect N},\\
		\th_\res : \frac{N}{M\sect N} &\iso \frac{N}{H\sect N}.\\
	\end{align*} 
	\par The fact that $(L|K,\nu)$ is purely wild and $(E|K,\nu)$ is tame implies that they are valuation disjoint. Hence we have that $(L.E|E,\nu)$ is a purely wild extension [Lemma \ref{Lemma valn disj tame and purely wild}]. So $L.E\sect K^r = E$ and thus $G_r.(H\sect N)=N$.
	\pars We first suppose that $E\subsetneq (L.E|K)^\ab$, that is, $\D \subsetneq \frac{N}{N\sect H}$. Define the continuous homomorphism $\chi:G\longrightarrow \G^\ab$ as a composition of the following homomorphisms,
	\[ \begin{tikzcd}
	G \ar[r, "\pi_H"] &\frac{G}{H\sect N} = \G \ar[r, "\pi"] &\G^\ab = \frac{\G}{\D}\\
	\end{tikzcd} \]
	where $\pi_H$ and $\pi$ denote the canonical projections. Now $\pi_H(G_r) = \frac{G_r.(H\sect N)}{H\sect N} = \frac{N}{H\sect N}$. Thus $\chi(G_r) = \pi(\frac{N}{H\sect N})$. Observe that the fact that $\D \subsetneq \frac{N}{N\sect H}$ implies that $\pi(\frac{N}{N\sect H}) \neq \{1\}$. Hence $G_r \nsubseteq \ker\chi$. 
	\pars Define the continuous homomorphism $\phi:G\longrightarrow \G^\ab$ as a composition of the following homomorphisms,
	\[ \begin{tikzcd}
	G \ar[r, "\pi_M"] &\frac{G}{M\sect N} \ar[r, "\th"] &\frac{G}{H\sect N} = \G \ar[r, "\pi"] &\G^\ab = \frac{\G}{\D}\\
	\end{tikzcd} \]
	where $\pi_M$ and $\pi$ denote the canonical projections. Then $G_r \subseteq M\sect N \subseteq \ker\phi$. 
	\pars Define the map $\chi^\prime:G\longrightarrow \G^\ab$ by $\chi^\prime(g):= \chi(g)\phi(g)$ for each $g\in G$. The fact that $\G^\ab$ is an abelian group implies that $\chi^\prime$ is a continuous homomorphism. Set $N^\prime:= \ker\chi^\prime$. Now $\chi^\prime(G_r) = \chi(G_r)=\pi(\frac{N}{H\sect N})$. Again, $\chi(N)=\pi(\frac{N}{N\sect H}) = \phi(N)$ and thus $\chi^\prime(N) \subseteq \pi(\frac{N}{N\sect H})$. The fact that $G_r\subseteq N$ then implies that,
	\[ \chi^\prime(G_r)=\pi(\frac{N}{N\sect H}) = \chi^\prime(N). \]
   $\chi^\prime(G)$ is a subgroup of $\G^\ab$ and any subgroup of $\G^\ab$ is of the form $T^\prime/\D$, where $T^\prime$ is a subgroup of $\G$ containing $\D$. Again any subgroup of $\G$ is of the form $\frac{T}{N\sect H}$ where $T$ is a subgroup of $G$ containing $N\sect H$. So we can set $\chi^\prime(G) = \pi(\frac{T}{N\sect H})$ where $T$ is a subgroup of $G$ containing $N\sect H$. The fact that $\pi(\frac{N}{N\sect H}) = \chi^\prime(N)\subseteq \chi^\prime (G)$ implies that $N\subseteq T$. Again, observe that $\frac{G}{N\sect H} = \frac{H}{N\sect H} \rtimes \frac{N}{N\sect H}$ and that $\frac{N}{N\sect H} \trianglelefteq \frac{G}{N\sect H}$. Then Lemma \ref{Lemma semidirect product} implies that,
   \[ \frac{T}{N\sect H} = (\frac{H}{N\sect H} \sect \frac{T}{N\sect H}) \rtimes \frac{N}{N\sect H}. \]
	Define,
	\[ H^\prime:= (\chi^\prime)^{-1} (\pi(\frac{H}{N\sect H} \sect \frac{T}{N\sect H})). \]
	Then $H^\prime$ is a subgroup of $G$ and $N^\prime \subseteq H^\prime$. Further,
	\begin{align*}
		\chi^\prime(G_r.H^\prime)&=\chi^\prime(G_r).\chi^\prime(H^\prime)=\pi(\frac{N}{N\sect H}).\pi(\frac{H}{N\sect H} \sect \frac{T}{N\sect H})\\
		&=\pi(\frac{N}{N\sect H}. (\frac{H}{N\sect H} \sect \frac{T}{N\sect H})) = \pi(\frac{T}{N\sect H})=\chi^\prime(G).
	\end{align*}
	The fact that $G_r$ is a normal subgroup of $G$ implies that $G_r.H^\prime$ is also a subgroup of $G$. Hence from Lemma \ref{Lemma surjective homo subgroup} we obtain that $G= G_r.H^\prime. N^\prime = G_r.H^\prime$. Again,
	\begin{align*}
		&\chi^\prime(N\sect H^\prime) \subseteq \chi^\prime(N) \sect \chi^\prime(H^\prime) = \pi(\frac{N}{N\sect H}) \sect \pi(\frac{H}{N\sect H} \sect \frac{T}{N\sect H})\\
		&\Longrightarrow \chi^\prime(N\sect H^\prime) \subseteq \pi(\frac{N}{N\sect H}) \sect \pi(\frac{H}{N\sect H}) \sect \pi(\frac{T}{N\sect H}) = \pi(\frac{N}{N\sect H}) \sect \pi(\frac{H}{N\sect H}).
	\end{align*}
	Let $n\in \frac{N}{N\sect H}$ and $h\in \frac{H}{N\sect H}$ such that $\pi (n) = \pi (h)$. Then $h^{-1}n\in \ker\pi\subset \frac{N}{N\sect H}$ and thus $h\in \frac{N}{N\sect H} \sect \frac{H}{N\sect H} = \{1\}$. Hence, $\pi(\frac{N}{N\sect H}) \sect \pi(\frac{H}{N\sect H}) = \{1\}$. Consequently, 
	\[ N\sect H^\prime \subseteq \ker\chi^\prime = N^\prime. \] 
	
	\pars We have that $G_r.H=G_r.H^\prime=G$. From [Theorem 2.2 (ii), \ref{Kuh-Pank-Roquette}] we can then observe that there exist closed subgroups $A \subseteq H$ and $A^\prime\subseteq H^\prime$ such that, 
	\[ A.G_r=G=A^\prime.G_r, \, \, A\sect G_r=\{1\}=A^\prime\sect G_r. \]
	Suppose that $A$ and $A^\prime$ are conjugate subgroups of $G$. Then $A\sect N$ and $A^\prime\sect N$ are conjugate subgroups of $G$. Now, $A\sect N\subseteq H\sect N\subseteq \ker\chi$ and $A^\prime\sect N\subseteq H^\prime\sect N \subseteq \ker\chi^\prime$. Hence, 
	\[ \chi(A\sect N) = \{1\} = \chi^\prime(A\sect N) \Longrightarrow A\sect N \subseteq \ker\phi\Longrightarrow (A\sect N).(M\sect N) \subseteq \ker\phi.  \]
	Let $W:= (K^\sep)^A$. The fact that $A.G_r =G$ implies that $(W|K,\nu)$ is a purely wild extension. Hence $(E|K,\nu)$ and $(W|K,\nu)$ are valuation disjoint extensions. Thus $(W.E|E,\nu)$ is a purely wild extension and hence $W.E\sect K^r = E$. Thus we have that $G_r.(A\sect N) = N$. Then the fact that $G_r\subseteq M\sect N$ implies that,
	\[ N=G_r.(A\sect N) \subseteq (M\sect N).(A\sect N) \subseteq N \Longrightarrow N= (M\sect N).(A\sect N) \Longrightarrow N\subseteq \ker\phi.  \]
	However $\phi(N) = \pi(\frac{N}{N\sect H}) \neq\{1\}$ which thus yields a contradiction. So $A$ and $A^\prime$ are not conjugate subgroups of $G$. Since $A$ and $A^\prime$ are group complements of $G_r$ within $G$, we have that $L_A:= (K^\ac)^A$ and $L_{A^\prime}:= (K^\ac)^{A^\prime}$ are maximal purely wild extensions of $(K,\nu)$ [Theorem 4.3, \ref{Kuh-Pank-Roquette}]. Now from Galois theory it follows that $A$ and $A^\prime$ not being conjugates over $G$ imply that $W= (K^\sep)^A = K^\sep \sect L_A$ and $W^\prime = (K^\sep)^{A^\prime} = K^\sep \sect L_{A^\prime}$ are not $K$-isomorphic. Hence $L_A$ and $L_{A^\prime}$ are not isomorphic over $K$ and thus $(K,\nu)$ does not admit a unique maximal purely wild extension.
	
	\parm We now suppose that $E=(L.E|K)^\ab$. Now $(L.E|E,\nu)$ is a nontrivial purely wild Galois extension, hence a $p$-extension. Thus $L.E|E$ is a tower of Galois extensions of degree $p$ [Lemma 5.9, \ref{Kuh vln model}]. Hence $E\subsetneq (L.E|E)^\ab$. Now $(F.E|E,\nu)$ is a nontrivial Galois tame extension such that $\Gal(F.E|E) \iso \Gal(L.E|E)$. So the valued field $(E,\nu)$ satisfies the assumptions of the theorem. The fact that $E\subsetneq (L.E|E)^\ab$ then implies that $(E,\nu)$ does not admit a unique maximal purely wild extension.   
\end{proof}

Observe that $L|K$ and $E|K$ are linearly disjoint over $K$. The conditions $\Gal(L.E|K)\iso\Gal(F.E|K)$ and $(L|K,\nu)$ is a nontrivial purely wild extension then imply that $p$ divides $[F.E:K]$. Now $F.E\subset K^r$ and $\nu K^r/\nu K$ is a $p^\prime$-group. Hence it follows that the residue field $K\nu$ admits a separable extension whose degree is a multiple of $p$.  

\begin{Example}
	Let $(k,\nu)$ be the valued field $\QQ$ equipped with the $p$-adic valuation $\nu_p$ for an odd prime $p$. Let $(K,\nu)$ denote the henselization of $k$ with respect to $\nu_p$. Then $\nu K=\ZZ$ and $K\nu = \FF_p$. Set $E:= K(\zeta)$ where $\zeta$ is a primitive $p$-th root of unity. Then $[E:K]\leq [\QQ(\zeta):\QQ] = p-1$. From Lemma \ref{lemma C^{p-1} = p} we have that there exists $\th\in E$ such that $\nu\th = \frac{1}{p-1}$. Thus $\frac{1}{p-1}\ZZ \subseteq \nu E$. It then follows from the Lemma of Ostrowski that $(E|K,\nu)$ is a tame extension with $[E:K] = [\nu E:\nu K] = p-1$. Further observe that $E|K$ is an abelian Galois extension where $\Gal(E|K) = C_{p-1}$ is the cyclic group of order $p-1$.
\pars Let $a>p^2$ be such that $a\equiv 1 \bmod p$. Set $F:= K(\a)$ where $\a^p = a$. Then $F.E = E(\a)$ is a Kummer extension with $\nu\a=0$. From Lemma \ref{Lemma value group bound Kummer extn} it then follows that $\nu(\a-E) \leq \frac{1}{p-1}$. Now $a-1 = np^2$ where $n\geq 1$. Hence $\nu(a-1) \geq 2 > \frac{p}{p-1}$. We have an expression,
\[ (\a-1)^p = (a-1) + p(\a-1)g \]
where $\nu g \geq 0$. Then the fact that $\nu(a-1) > \frac{p}{p-1} \geq p\nu(\a-1)$ implies that $p\nu (\a-1) = \nu [p(\a-1)g]$. Consequently, $\nu(\a-1) = \frac{1}{p-1}$. From Theorem \ref{Theorem classification kummer} we then obtain that $(F.E|E,\nu)$ is a defectless unramified extension. In particular, $F\subseteq F.E \subseteq E^r = K^r$ and hence $(F|K,\nu)$ is a nontrivial tame extension. 
\pars Let $L:= K(\b)$ where $\b^p=p$. Thus $\nu\b = \frac{1}{p}$ and thus $(L|K,\nu)$ is a nontrivial purely wild extension. 
\pars Observe that $F.E$ is the splitting field of $X^p-a$ over $K$ and hence $F.E|K$ is a Galois extension. We have that $\Gal(F.E|K) = C_p \ltimes C_{p-1}$ and $\Gal(F.E|E) = C_p$ where $C_p$ is the cyclic group of order $p$. Similarly, $\Gal(L.E|K) = C_p\ltimes C_{p-1}$ and $\Gal(L.E|E) = C_p$. The fact that $E|K$ is abelian Galois implies that $E\subseteq (L.E|K)^\ab$. So, $[(L.E|K)^\ab:K] = c(p-1)$ where $c$ divides $p$. If $c=p$ then $L.E|K$ is an abelian extension, which yields a contradiction. So $c=1$ and consequently $E= (L.E|K)^\ab$. Observe that $E$, $F$ and $L$ satisfy the assumptions of Theorem \ref{Theorem non-unique}. It then follows that $(E,\nu)$ does not admit a unique maximal purely wild extension.     
\end{Example}

The conclusions of the above example also follow from Corollary \ref{Corollary Galois nonunique}.

\begin{Theorem}
	Let $(L|K,\nu)$ be a nontrivial separable purely wild extension of henselian valued fields with $\ch K\nu=p>0$. Set $H:= \Gal(K^\sep|L)$ and $F:= (K^\sep)^{[G_r,H]}$. Suppose that $L$ and $F$ are not linearly disjoint over $K$. Then $(K,\nu)$ does not admit a unique maximal purely wild extension if the residue field $K\nu$ is not Artin-Schreier closed. 
\end{Theorem}

\begin{proof}
	Let $N:= \Gal(K^\sep|F)$. We know from Galois theory that $N$ is the closure of $[G_r,H]$ in $G$ and hence $[G_r,H] \subseteq N$. The fact that $G_r.H=G$ implies that $[G_r,H] \trianglelefteq G$. Hence $N$ is a closed normal subgroup of $G$ [Proposition 1.5, Chapter 6, \ref{Karpilovsky}] and consequently $F|K$ is a Galois extension. Then $L$ and $F$ not being linearly disjoint over $K$ implies that $L\sect F\neq K$. Thus, 
	\[ K=L\sect K^r \subsetneq L\sect F \subseteq L\Longrightarrow (L\sect F|K,\nu) \text{ is a nontrivial purely wild extension. } \]
	So $K^r|K$ and $(L\sect F)|K$ are valuation disjoint. Hence $[(L\sect F).K^r:K^r] = [(L\sect F):K] >1$. Further, $G_r$ being a normal subgroup of $G$ implies that $[G_r,H] \subseteq G_r$. Consequently,  $N\subseteq G_r$ and hence $K^r \subseteq F$. Now, 
	\[ K^r \subsetneq (L\sect F).K^r \subseteq L.K^r \sect F.K^r = L^r \sect F. \]
	Thus we have that $(G_r\sect H).N \subsetneq G_r$ and hence $(G_r\sect H).N$ is a proper closed subgroup of $G_r$. Hence there exists a maximal closed subgroup $W$ of $G_r$ such that $(G_r\sect H).[G_r,H] \subseteq (G_r\sect H).N \subseteq W \subsetneq G_r$. The fact that $G_r$ is a pro-$p$-group now implies that $W$ is a normal subgroup of $G_r$ with $[G_r:W]=p$. 
	\pars Consider the continuous homomorphism $\chi:G_r \longrightarrow G_r/W \iso \ZZ/p\ZZ $ with $\ker\chi =W$. We extend the map to $G= G_r.H$ by defining, 
	\begin{align*}
	\chi:G &\longrightarrow \ZZ/p\ZZ \\
	gh&\longmapsto \chi(g)\\
	\end{align*}
	where $g\in G_r$ and $h\in H$. Observe that the fact $[G_r,H] \subseteq W$ implies that $hgh^{-1}g^{-1} \in W$ and hence $\chi(hgh^{-1}) = \chi(g)$ for each $g\in G_r$ and $h\in H$. We now check that the map is well-defined. Indeed for $g_i\in G_r$ and $h_i\in H$ we have that,
	\[ g_1h_1=g_2h_2 \Longrightarrow g_2^{-1}g_1=h_2h_1^{-1} \in G_r\sect H \Longrightarrow \chi(g_1) =\chi(g_2) \Longrightarrow \chi(g_1h_1)=\chi(g_2h_2). \]
	Again using the expression $g_1h_1g_2h_2=g_1h_1g_2h_1^{-1}h_1h_2$ and observing that $\chi$ is a group homomorphism on $G_r$ we obtain that, 
	\[ \chi(g_1h_1g_2h_2)=\chi(g_1h_1g_2h_1^{-1}) =\chi(g_1)+\chi(h_1g_2h_1^{-1}) =\chi(g_1)+\chi(g_2)=\chi(g_1h_1)+\chi(g_2h_2). \]
	Thus $\chi:G\longrightarrow \ZZ/p\ZZ$ is a surjective group homomorphism with $H \subseteq \ker\chi$. Hence $G_r \nsubseteq \ker\chi$. The fact that $[G:\ker\chi]=p$ now implies that $G_r.\ker\chi = G$.
	
	\pars  The residue field $K\nu$ not being Artin-Schreier closed admits an Artin-Schreier extension $K\nu(\th)|K\nu$. Set $G\nu := \Gal(K\nu^\sep|K\nu)$ and set $T\nu:= Gal(K\nu^\sep |K\nu(\th))$. Then $T\nu$ is a normal subgroup of $G\nu$ with $[G\nu:T\nu]=p$. We have that $G\nu \iso G/G_i$. Thus normal closed subgroups of $G\nu$ correspond to normal closed subgroups of $G$ containing $G_i$. Let $T\nu$ correspond to $T/G_i$ where $T$ is a normal subgroup of $G$ with $[G:T]=[G\nu:T\nu]=p$. We have the chain,
	\[ G_r \trianglelefteq G_i \trianglelefteq T \trianglelefteq G. \]
	Consider the continuous surjective homomorphism,
	\[ \phi:G\longrightarrow G/T \iso \ZZ/p\ZZ . \]
	Thus $G_r \subseteq T= \ker \phi$ and hence $H\nsubseteq \ker\phi$.
	
	\pars Define the continuous homomorphism $\psi:G\longrightarrow \ZZ/p\ZZ$ given by $\psi = \chi + \phi$. Then $G_r \nsubseteq \ker\psi$, hence $\psi$ is surjective and $G_r.\ker \psi = G$. From [Theorem 2.2 (ii), \ref{Kuh-Pank-Roquette}] we have that there exist closed subgroups $A \subseteq \ker\chi$ and $B\subseteq\ker\psi$ such that, 
	\[ A.G_r=G=B.G_r, \, \, A\sect G_r=\{1\}=B\sect G_r. \]
	If $A$ and $B$ are conjugate subgroups over $G$ then we have that $\chi(A)=\psi(A)=0$ and thus $\phi(A)=0$. Again we have that $\phi(G_r)=0$. Then the fact that $A.G_r=G$ implies that $\phi(G)=0$ which is a contradiction. So $A$ and $B$ are not conjugates in $G$. With the same arguments as in the proof of Theorem \ref{Theorem non-unique} it now follows that $(K,\nu)$ does not admit a unique maximal purely wild extension.	  
\end{proof}

\pars The characters $\chi, \, \phi$ and $\psi$ are \enquote{admissible} in the language of [\ref{Kuh-Pank-Roquette}]. Following the same arguments as in the proof of Proposition 8.5 of [\ref{Kuh-Pank-Roquette}] we arrive at the following result: 
\begin{Theorem}
	Let $(L|K,\nu)$ be a nontrivial separable immediate extension of henselian valued fields with $\ch K\nu=p>0$. Set $H:= \Gal(K^\sep|L)$ and $F:= (K^\sep)^{[G_r,H]}$. Suppose that $L$ and $F$ are not linearly disjoint over $K$. Then $(K,\nu)$ does not admit a unique maximal immediate extension if the residue field $K\nu$ is not Artin-Schreier closed.
\end{Theorem}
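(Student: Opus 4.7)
The plan is to run the proof of the preceding theorem essentially verbatim, using the same characters $\chi$, $\phi$, and $\psi := \chi + \phi$, and then to strengthen the conclusion from ``maximal purely wild'' to ``maximal immediate'' via the admissibility machinery of [\ref{Kuh-Pank-Roquette}, \S 8]. Since every immediate extension is purely wild and separable, the hypotheses of the preceding theorem are satisfied, so all of the earlier group-theoretic constructions are available without modification.

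Concretely, from the failure of linear disjointness of $L$ and $F=(K^\sep)^{[G_r,H]}$ one obtains a continuous surjection $\chi : G \to \ZZ/p\ZZ$ with $H \subseteq \ker\chi$ and $G_r \not\subseteq \ker\chi$, constructed via a maximal closed subgroup $W$ of $G_r$ containing $(G_r\sect H).[G_r,H]$. The hypothesis that $K\nu$ is not Artin-Schreier closed supplies a continuous surjection $\phi : G \to \ZZ/p\ZZ$ that factors through $G/G_i$, and one sets $\psi := \chi + \phi$; then $G_r \not\subseteq \ker\psi$ as well. As in the preceding proof, [Theorem 2.2(ii), \ref{Kuh-Pank-Roquette}] produces closed complements $A \subseteq \ker\chi$ and $B \subseteq \ker\psi$ of $G_r$ in $G$, and the conjugacy alternative is ruled out by surjectivity of $\phi$: if $A$ and $B$ were conjugate in $G$, then $\phi(A)=\chi(A)=0$ would force $\phi(G)=\phi(A.G_r)=0$, a contradiction.

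The new ingredient is admissibility of the three characters in the sense of [\ref{Kuh-Pank-Roquette}]. The character $\chi$ cuts out a degree-$p$ subextension of the immediate extension $L|K$, which is therefore itself immediate; the character $\phi$ is admissible by its very construction via the residue field; and the sum $\psi$ inherits admissibility because $\phi$ vanishes on $G_r$, so the ``purely wild part'' of $\psi$ coincides with that of $\chi$. Following the argument of [Proposition 8.5, \ref{Kuh-Pank-Roquette}], admissibility allows one to refine the choice of complements $A$ and $B$ so that the corresponding fixed fields $L_A := (K^\ac)^A$ and $L_B := (K^\ac)^B$ are in fact maximal immediate extensions of $(K,\nu)$ rather than merely maximal purely wild. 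Combined with the non-conjugacy of $A$ and $B$, this yields two non-$K$-isomorphic maximal immediate extensions of $(K,\nu)$.

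The main obstacle is the verification step hidden in the phrase ``admissibility is inherited.'' Namely, one must check that $\chi$ remains admissible after passing from the original $L$ to a degree-$p$ subextension inside $L$, that $\psi$ is admissible despite being built as a sum, and that the refinement of complements in [Proposition 8.5, \ref{Kuh-Pank-Roquette}] does not destroy non-conjugacy. Each of these is a direct adaptation of the Kuhlmann--Pank--Roquette argument, but they require careful tracking of how $\chi$ and $\phi$ interact on $G_r$ versus on the quotient $G_i/G_r$ — which is exactly the bookkeeping that the cited proposition carries out.
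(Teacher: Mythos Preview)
Your proposal is essentially the paper's approach: the paper's entire argument for this theorem is the remark that the characters $\chi$, $\phi$, $\psi$ constructed in the preceding theorem are \enquote{admissible} in the sense of [\ref{Kuh-Pank-Roquette}], and that one then follows the proof of Proposition~8.5 there; your write-up is a faithful expansion of exactly this. One small slip: you write that \enquote{every immediate extension is purely wild and separable}, but immediate extensions need not be separable in positive characteristic --- separability is an explicit hypothesis here, not a consequence of immediacy, so just invoke the hypothesis directly.
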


If $L|K$ is Galois, then $H$ is a closed normal subgroup of $G$ and thus $[G_r,H] \subseteq G_r\sect H$. The following corollary is immediate:

\begin{Corollary}\label{Corollary Galois nonunique}
	Let $(L|K,\nu)$ be a nontrivial Galois purely wild extension of henselian valued fields with $\ch K\nu=p>0$. Suppose that the residue field $K\nu$ is not Artin-Schreier closed. Then $(K,\nu)$ does not admit a unique maximal purely wild extension. If $(L|K,\nu)$ is immediate, then $(K,\nu)$ does not admit a unique maximal immediate algebraic extension.
\end{Corollary}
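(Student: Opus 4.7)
The plan is to deduce the corollary by direct application of the preceding two theorems (the one on purely wild extensions and its immediate-extension analogue), verifying in each case the only nontrivial hypothesis — that $L$ and $F := (K^\sep)^{[G_r, H]}$ fail to be linearly disjoint over $K$. Since a Galois extension is automatically separable, the separability requirements are met for free.

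The key step I would carry out is to show $L \subseteq F$. Setting $G := \Gal(K^\sep|K)$ and $H := \Gal(K^\sep|L)$, the Galois hypothesis makes $H$ a closed normal subgroup of $G$, while $G_r$ is closed normal by henselianity. A short commutator manipulation — already flagged in the sentence preceding the statement — shows that for $g \in G_r$ and $h \in H$, normality of $G_r$ places $ghg^{-1}h^{-1} = g(hg^{-1}h^{-1})$ in $G_r$, and normality of $H$ places $ghg^{-1}h^{-1} = (ghg^{-1})h^{-1}$ in $H$; hence $[G_r, H] \subseteq G_r \cap H$. Passing to fixed fields reverses this inclusion, giving $F \supseteq (K^\sep)^{G_r \cap H} = L \cdot K^r \supseteq L$.

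Once $L \subseteq F$ is established, $L \cap F = L \neq K$ by nontriviality of $L|K$; since $L|K$ is Galois, this is precisely the statement (recalled in Section \ref{Section valn and field theoretic facts}) that $L$ and $F$ are not linearly disjoint over $K$. Applying the preceding purely wild theorem then yields the first assertion, and applying its immediate-extension counterpart — valid because immediate extensions are purely wild, so the very same argument verifies the very same hypothesis — yields the second. I do not anticipate any genuine obstacle: as the author himself indicates, the corollary is just the unpacking of the group-theoretic containment $[G_r, H] \subseteq G_r \cap H$ into the field-theoretic failure of linear disjointness between $L$ and $F$.
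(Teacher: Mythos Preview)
Your proposal is correct and follows exactly the route the paper takes: the single observation $[G_r,H]\subseteq G_r\cap H$ (which the author states in the sentence immediately preceding the corollary) is all that is needed, and you have correctly unpacked it into $L\subseteq (K^\sep)^{G_r\cap H}=L.K^r\subseteq F$, hence $L\cap F=L\neq K$, which feeds directly into the two preceding theorems. The paper declares the corollary ``immediate'' from that inclusion and gives no further argument, so there is nothing to compare beyond your filling in the Galois-correspondence details.
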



\end{document}